\numberwithin{equation}{section}
\newcommand{\Ind}{\mathds{1}}
\newcommand{\ntriple}[1]{{\left\vert\kern-0.25ex\left\vert\kern-0.25ex\left\vert #1 
    \right\vert\kern-0.25ex\right\vert\kern-0.25ex\right\vert}}
\newcommand{\der}[2]{\frac{\dd #1}{\dd #2}}
\newcommand{\dd}{\mathrm{d}}
\newcommand{\Diff}{\mathrm{D}}
\newcommand{\supp}{\operatorname{supp}}
\newcommand{\curl}{\operatorname{curl}}
\renewcommand{\div}{\operatorname{div}}
\newcommand{\dist}{\operatorname{dist}}
\renewcommand{\leq}{\leqslant}\renewcommand{\le}{\leqslant}
\renewcommand{\geq}{\geqslant}\renewcommand{\ge}{\geqslant}
\renewcommand{\epsilon}{\varepsilon}
\newcommand{\eps}{\varepsilon}
\newcommand{\R}{\mathbb{R}}
\newcommand{\ds}{\displaystyle}
\renewcommand{\sp}{\hspace{0.2cm}}
\renewcommand{\tilde}{\widetilde}
\theoremstyle{plain}
\newtheorem{theo}{Theorem}[section]
\newtheorem{lemme}[theo]{Lemma}
\newtheorem{prop}[theo]{Proposition}
\newtheorem{coro}[theo]{Corollary}
\newtheorem{defin}[theo]{Definition}
\newtheorem{rem}[theo]{Remark}
\def\cal#1{\mathcal{#1}}
\title{Dynamics of helical vortex filaments in non viscous incompressible flows}
\author{Martin Donati, Christophe Lacave, Evelyne Miot}
\def\adrese{
\noindent M. Donati: Université de Lyon, ENS de Lyon, Unité de Mathématiques Pures
et Appliquées, 69364 Lyon, France. \\
C. Lacave: Univ. Savoie Mont Blanc, CNRS, LAMA, ISTerre, 73000 Chamb\'ery, France.\\
E. Miot: Univ. Grenoble Alpes, CNRS, IF, 38000 Grenoble, France.\\
{\it Email address:}\\
\texttt{Martin.Donati@univ-grenoble-alpes.fr}\\
\texttt{Christophe.Lacave@univ-smb.fr}\\
\texttt{Evelyne.Miot@univ-grenoble-alpes.fr}
}
\date{\today}
\begin{document}
\maketitle

\begin{abstract}
 In this paper we study concentrated solutions of the three-dimensional Euler equations in helical symmetry without swirl. We prove that any helical vorticity solution initially concentrated around helices of pairwise distinct radii remains concentrated close to filaments. As suggested by the vortex filament conjecture, we prove that those filaments are translating and rotating helices. Similarly to what is obtained in other frameworks, the localization is weak in the direction of the movement but strong in its normal direction, and holds on an arbitrary long time interval in the naturally rescaled time scale. In order to prove this result, we derive a new explicit formula for the singular part of the Biot-Savart kernel in a two-dimensional reformulation of the problem. This allows us to obtain an appropriate decomposition of the velocity field to reproduce recent methods used to describe the dynamics of vortex rings or point-vortices for the lake equation.
\end{abstract}

\section{Introduction}

The purpose of this paper is to study the time evolution for 3D inviscid flows for which the vorticity is initially concentrated around helical curves.

We consider the Euler equations governing the dynamics of a three-dimensional inviscid, incompressible fluid in a domain $\Omega$:
\begin{equation}\label{eq:Euler3D}\tag{E}
\begin{cases}
 \ds \partial_t U + (U \cdot \nabla) U = -\nabla P & \text{ in } \Omega \times \R_+^* ,\\
 \div(U)=0& \text{ in } \Omega \times \R_+ ,\\
 U\cdot n =0 & \text{ on } \partial\Omega \times \R_+,
\end{cases}
\end{equation}
where $n$ is the outward normal vector, $U: \Omega \times \R_+^* \to \R^3$ denotes the velocity of the fluid and $P$ the pressure. We shall focus on particular flows, called vortex filaments, for which the vorticity $\curl(U)$ is sharply concentrated in a thin tube around a curve in $\R^3$. Understanding the stability (namely, whether the concentration of the filament around a curve persists in time) and the dynamics of vortex filaments in three-dimensional flows are a longstanding issue in mathematical physics. Da~Rios formally derived in \cite{DaRios} that, to leading order, the asymptotic motion law for one single vortex filament in a tube of size $\eps$ around a curve parametrized by $\chi(\cdot,t)$, with arc-length parameter $\sigma$, is governed by the binormal curvature flow:
\begin{equation}\tag{BF}
 \label{eq:BF} \partial_t \chi = c |\ln \eps| (\partial_\sigma \chi \times \partial_{\sigma \sigma} \chi)
\end{equation}
where $c$ is the curvature. Note that \eqref{eq:BF} exhibits some trivial solutions: the stationary vortex line, the uniformly translating circle (known as ``vortex ring'') and the translating-rotating helix (referred to as ``helical filaments'' in the remaining of the paper). We refer e.g. to \cite{majda-bertozzi} and to references therein for a general introduction on the subject. 
The ``vortex filament conjecture'' is the conjecture that vorticity initially concentrated around a curve remains close to a curve evolving according to \eqref{eq:BF} to leading order for at least a certain interval of time. While it is completely settled in the 2D case (where vortex filaments reduce to point vortices), see \cite{MarPul93}, it is open in general. Jerrard and Seis \cite{JerrardSeis} provided a rigorous derivation of \eqref{eq:BF} \emph{assuming} the vorticity remains concentrated around the curve.

Without assuming \emph{a priori} concentration, further results have been obtained under supplementary symmetry assumptions. For axisymmetric flows without swirl, Butt\`a, Cavallaro and Marchioro \cite{Mar3} recently rigorously justified the dynamics of several vortex rings of different radii. They also established a ``semi-strong'' localization result: the filaments remains for all time sharply localized in the radial direction (namely with respect to the distance to the symmetry axis). Their approach inspired a recent work by Hientzsch, Lacave and Miot \cite{Hientzsch_Lacave_Miot_2022_Dynamics_of_PV_for_the_lake_eq} in the setting of point vortices for the lake equations, which is a 2D model for incompressible flows inheriting an anelastic constraint from the 3D case.

In the special case of vortex rings, Fraenkel \cite{Fraenkel} exhibited a family of solutions of \eqref{eq:Euler3D} such that the corresponding vorticity concentrates for all time on a curve solution of \eqref{eq:BF}. 
In \cite{DavilaPinoMussoWei22} Davila, Pino, Musso and Wei constructed such a family of solutions that do not change form, concentrating to one or several polygonally distributed rotating-translating helical filaments, by means of elliptic singular perturbation techniques. In a series of papers \cite{Cao_Wan_2023,Cao_Wan_2023bis,Cao_Wan_2023ter,Cao_Wan_2022_Structure_of_Green,Cao24,Cao_Li_Qin_Wan_2025}, several desingularization results were obtained for helical vortex filaments by means of various variational techniques, enabling in particular to construct helical vortex patches. In particular, \cite{Cao_Wan_2023} obtains helical vortices with compact support, \cite{Cao_Wan_2022_Structure_of_Green} obtains helical vortex patches and \cite{Cao24} covers both cases and some other profiles by use of rearrangements techniques. We also mention the recent work by Guerra and Musso~\cite{GuerraMusso}, that constructs a special family of solutions concentrating to a collapsing configuration of helical filaments.
We emphasize the fact that in all the works mentioned above, the solutions always belong to a restricted class of functions. Indeed, the approach is to construct a specific class of solutions with a prescribed vorticity profile, except in \cite{Cao24} where a general initial vortex profile is chosen but the solution for all time is then constructed by means of a rearrangement of this profile, restricting the actual choice of the initial data.

\medskip

Our objective here is to establish the dynamics of helical filaments starting from generic initial data, with very few and natural assumptions relating only to the initial concentration, and with other techniques.

\medskip

The helical symmetry is a physically relevant framework since this symmetry is obtained in many different contexts, in particular in the wake of rotors. This covers a wide range of situations from the study of wind or water turbine to vertical flight of helicopters for instance. Moreover, in the wake of each wing of an airplane, vortices of the same sign are created on straight lines, but immediately start interacting with each other, inducing a rotation that creates a local helical symmetry. Recent papers (see for instance \cite{Bolnot_LD_Leweke_Pairing_Instability,blanco-rodriguez_ledizes_selcuk_delbende_rossi_2015,castillo-castellanos_ledizes_2022}) study theoretically, numerically and experimentally physical properties of such flows and in particular instabilities due to non helical perturbations and viscosity.

\medskip

We now introduce with some more details our working framework. We focus on flows with helical symmetry and without helical swirl. More precisely, following \cite{Dutrifoy,EttingerTiti}, for some fixed $h>0$ we define the following operators
for all $\theta\in \R$:
 \begin{equation*}
 R_\theta = \begin{pmatrix}
 \cos \theta & -\sin\theta & 0 \\
 \sin\theta&\cos\theta &0\\
 0&0&1
 \end{pmatrix}
 \quad \text{and}\quad
 S_{\theta,h}x=R_\theta x + h\begin{pmatrix}
 0\\0\\\theta 
 \end{pmatrix},\quad x\in \Omega
 \end{equation*}
and we say that $\Omega$ is a helical domain if it satisfies $ S_{\theta,h}\Omega=\Omega$, for all $\theta \in \R$.
We say that
 $(U,P)$ is a helical solution to \eqref{eq:Euler3D} on the helical domain $\Omega$ if:
 \begin{equation*}
 U(S_{\theta,h} x)=R_\theta U(x), \quad P(S_{\theta,h} x)=P(x),\quad \forall x\in \Omega, \quad \forall \theta \in \R.
 \end{equation*}
Finally, we say that $(U,P)$ is helical without swirl if $(U,P)$ is helical and $U$ is orthogonal to the helices, namely:
 \begin{equation}\label{def:xi}
 U(x)\cdot \xi(x)=0, \quad \forall x=\begin{pmatrix} x_1\\x_2\\x_3\end{pmatrix}\in \Omega,
 \text{ where }
 \xi(x)=\begin{pmatrix} -x_2\\ x_1 \\ h\end{pmatrix}.
 \end{equation}
 Such properties are formally preserved by \eqref{eq:Euler3D}.
 
 Global existence and uniqueness of weak solutions to \eqref{eq:Euler3D} that are helical without swirl have been proved in \cite{Dutrifoy, EttingerTiti, BronziLopes,JiuLiNiu17,GuoZhao23}. Such solutions are also Lagrangian, see Section~\ref{subsec:2D} hereafter for more details. It turns out that the ``no swirl'' condition implies that the vorticity is parallel to $\xi$ which allows us to define a scalar quantity $\omega$:
 \begin{equation}\label{def:omega}
\curl U(x,t) = \frac1h \omega( \tilde R_{-\frac{x_3}{h}}(x_1,x_2),t) \xi(x),
\text{ where }
 \tilde R_\theta = \begin{pmatrix}
 \cos \theta & -\sin\theta \\
 \sin\theta&\cos\theta
 \end{pmatrix}.
\end{equation}
 From this observation, it was proved in \cite{EttingerTiti} that, for helical flows without swirl, Equation \eqref{eq:Euler3D} reduces to a two-dimensional system for the vorticity posed on the 2D cross-section $\mathcal{U}=\{(x_1,x_2)|(x_1,x_2,0)\in \Omega\}$ of $\Omega$:
\begin{equation}\label{eq:Helico2D}
\begin{cases}
 \partial_t \, \omega + v\cdot \nabla \omega = 0 & \text{ in } \mathcal{U}\times \R_+^* , \vspace{1mm}\\
 v = \nabla^\perp \Psi & \text{ in } \mathcal{U} \times \R_+ ,\vspace{1mm} \\
 \div \big( K\nabla \Psi \big) = \omega & \text{ in } \mathcal{U} \times \R_+ ,\quad \Psi=0\text{ on } \partial \mathcal{U} \times \R_+ , \vspace{1mm} \\
 \omega(\cdot,0) = \omega_0 & \text{ in } \mathcal{U},
\end{cases}
\end{equation} 
where $K$ is a symmetric positive-definite matrix defined by
\begin{equation}\label{def:K}
 K(x) = \frac{1}{x_1^2+x_2^2+h^2} \begin{pmatrix} h^2 + x_2^2 & -x_1x_2 \\ -x_1x_2 & h^2 + x_1^2 \end{pmatrix}.
\end{equation}
We have used the notation, $\nabla^\perp \Psi = (\nabla\Psi)^\perp$ with the convention $(a,b)^\perp=(-b,a)$. For this result and for the rest of this article, we assume that $\cal{U}$ is a bounded, simply connected domain, with $C^{1,1}$ boundary for simplicity reasons. More details on weak solutions to \eqref{eq:Helico2D} will be given in Section~\ref{subsec:2D}.
 
In the 2D reduction of the 3D system, vortex filaments reduce to point vortices, that correspond to the 2D projection of the filaments. So we are left to investigating the persistence and dynamics of point vortices for Equation \eqref{eq:Helico2D}. 
Note that in view of \eqref{eq:BF}, it is more judicious to consider another time-scale in order to obtain a velocity of order one when considering concentrated vortices, thus we are led to consider a rescaled system:
\begin{equation}\label{eq:Helico2D_rescaled}
\begin{cases}
 \ds \partial_t \, \omega^\eps + \frac{1}{|\ln \eps|}v^\eps\cdot \nabla \omega^\eps = 0 & \text{ in } \mathcal{U} \times \R_+^*, \vspace{1mm}\\
 v^\eps = \nabla^\perp \Psi^\eps & \text{ in } \mathcal{U} \times \R_+ ,\vspace{1mm} \\
 \div \big( K \nabla \Psi^\eps \big) = \omega^\eps & \text{ in } \cal U\times\R_+ ,\quad \Psi^\epsilon=0\text{ on } \partial \mathcal{U} \times \R_+ , \vspace{1mm} \\
 \omega^\eps(\cdot,0) = \omega_0^\eps & \text{ in } \mathcal{U}.
\end{cases}
\end{equation}
One of the contributions of this paper is that in Proposition~\ref{prop:Biot_Savart} we obtain an important decomposition of the Green's function of the operator $\mathcal{L} = \div (K \nabla \cdot)$ as
\begin{equation*}
 \mathcal{G}_{K,\mathcal{U}} := G_K + S_{K,\mathcal{U}},
\end{equation*}
where $S_{K,\mathcal{U}} \in W^{1,\infty}$ and $G_K$ is explicitly given at \eqref{def:G_K-0}. A similar decomposition was obtained in \cite{Cao_Wan_2022_Structure_of_Green} exhibiting a different singular term, but with a remainder only locally H\"older. Here we obtain a more precise decomposition with the stronger regularity $W^{1,\infty}$ which we crucially need in the following. From this Green's function, we obtain a Biot-Savart law that gives in Proposition~\ref{prop:decomp_vitesse} a sharp decomposition of the velocity field $v^\eps$.

\medskip

We may now state our main result as follows:
\begin{theo}\label{theo:main} Assume that there exists $R_{\cal U}>0$ such that $B(0,R_{\cal U})\subset \mathcal{U}$.
Let $(z_{i,0})_{1 \le i \le N}$ be $N$ points in $B(0,R_{\cal U})$ such that $|z_{i,0}| \neq |z_{j,0}|$ for every $i \neq j$. Let $\gamma_i \in \R^*$.

For every $\eps >0$ such that $\eps< R_{\cal U}-\max_i |z_{i,0}|$, let $\omega_0^\eps \in L^\infty (\mathcal{U})$ such that
\begin{equation}\label{hyp:omega0}
 \left\{\begin{aligned}
& \omega_0^\eps = \sum_{i=1}^N \omega_{i,0}^\eps, \\
& \supp \omega_{i,0}^\eps \subset B\big( z_{i,0},\eps \big), \\
& \omega_{i,0}^\eps \text{ has a definite sign and } \int_{\mathcal{U}} \omega_{i,0}^\eps(x) \dd x = \gamma_i, \\
& |\omega_0^\eps| \le \frac{M_0}{\eps^2},\quad \text{for some $M_0>0$}.
\end{aligned}\right.
\end{equation}
For $T>0$, let $(v^\eps,\omega^\eps)$ be the unique weak solution of \eqref{eq:Helico2D_rescaled} on $[0,T]$ (in the sense of Definition~\ref{defi:weaksol2D} below).
Let $z_i(t) = \tilde{R}_{t \nu_i } z_{i,0}$ with
\begin{equation*}
 \nu_i = -\frac{\gamma_i}{4\pi h \sqrt{|z_{i,0}|^2+h^2}}.
\end{equation*}

Then, there exists a decomposition 
$$\omega^\eps=\sum_{i=1}^N \omega_i^\eps, \quad \omega_i^\eps \in L^\infty(\mathcal{U})$$ which satisfies:
\begin{itemize}
 \item[$(i)$] A weak localization property: there are $C_T, \eps_{T}>0$ such that, for any $\eps\in (0,\eps_{T}]$, we have 
 \begin{equation*}
 \sup_{t \in [0,T]}\left| \gamma_i - \int_{B(z_i(t),r_\eps)} \omega_i^\eps(x,t)\dd x\right| \le \frac{C_T}{\ln |\ln \eps|},\quad \text{where} 
 \quad r_\eps = \left(\frac{\ln |\ln\eps|}{|\ln \eps|}\right)^{1/2},
 \end{equation*}
 and
 \begin{equation*}
 \sup_{t \in [0,T]} \left|\frac{1}{\gamma_i}\int_{\mathcal{U}} x\omega_i^\eps(x,t)\dd x - z_i(t)\right| \le \frac{C_T}{\sqrt{|\ln\eps|}}.
 \end{equation*}
 
 \item[$(ii)$] A strong localization property in the radial direction: for every $\kappa \in (0,1/4)$, there is $C_{\kappa,T}$ and $\eps_{\kappa,T}>0$ such that, for every $\eps\in(0,\eps_{\kappa,T}]$, we have
 \begin{equation*}
 \supp \omega_i^\eps(\cdot,t) \subset \left\{ x \in \mathcal{U} \, , \, \Big||x| - |z_{i,0}|\Big| \le \frac{C_{\kappa,T}}{|\ln\eps|^\kappa} \right\}, \quad \text{for all}\quad t\in [0,T].
 \end{equation*}
\end{itemize}

\end{theo}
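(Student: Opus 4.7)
The plan is to adapt the strategy of Butt\`a--Cavallaro--Marchioro for axisymmetric vortex rings, recently implemented in the lake-equation setting by Hientzsch--Lacave--Miot, to the helical geometry, relying crucially on the Biot--Savart decomposition from Proposition~\ref{prop:Biot_Savart} and Proposition~\ref{prop:decomp_vitesse}. First I would define each blob $\omega_i^\eps(\cdot,t)$ as the pushforward of $\omega_{i,0}^\eps$ by the Lagrangian flow of the divergence-free vector field $v^\eps/|\ln\eps|$, which is well defined by the existence/uniqueness theory for \eqref{eq:Helico2D_rescaled}. Setting $d_0 = \frac{1}{2}\min_{i\ne j}\bigl||z_{i,0}|-|z_{j,0}|\bigr|$, I introduce an exit time $\tau_\eps\in(0,T]$ enforcing both $|B_i(t)-z_i(t)|\le 1$ and $\dist(\supp\omega_i^\eps(t),\supp\omega_j^\eps(t))\ge d_0$ for every $i\ne j$, and aim by continuity to upgrade to $\tau_\eps=T$ for $\eps$ small.

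For the center of vorticity $B_i(t)=\gamma_i^{-1}\int x\,\omega_i^\eps(x,t)\,\dd x$, differentiating and inserting the velocity decomposition yields $\dot B_i$ as the sum of a self-convolution $\iint\nabla_x^\perp G_K(x,y)\omega_i^\eps(x)\omega_i^\eps(y)\,\dd x\dd y$, cross-convolutions with $j\ne i$, and a bounded regular contribution from $S_{K,\mathcal{U}}$. Using the explicit structure of $G_K$ supplied by Proposition~\ref{prop:Biot_Savart} (in particular its rotational invariance around the origin and the precise prefactor of its logarithmic singularity), symmetrization in $(x,y)$ together with a Taylor expansion around $B_i$ extracts $\gamma_i^2|\ln\eps|\nu_i B_i^\perp$ at leading order, plus corrections of size $O(1)$ controlled by the blob radius. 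Cross-terms are $O(1)$ thanks to the $d_0$-separation and the regular part is $O(1)$ because $S_{K,\mathcal{U}}\in W^{1,\infty}$. Dividing by $|\ln\eps|$ and comparing with $\dot z_i=\nu_i z_i^\perp$, a Gronwall argument delivers $|B_i(t)-z_i(t)|=O(|\ln\eps|^{-1/2})$ on $[0,\tau_\eps]$.

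To promote the bound on $B_i$ into concentration around $z_i(t)$, I would estimate a truncated moment of inertia $J_i(t)=\int W_\eps(x-B_i(t))\omega_i^\eps\,\dd x$ with a smooth $W_\eps$ behaving like $|x-B_i|^2$ on the scale $r_\eps$. The self-contribution to $\dot J_i$ cancels up to $O(|\ln\eps|^{-1})$ by the antisymmetry of $\nabla_x^\perp G_K$ under the $(x,y)$-swap once the rotationally symmetric weight of $G_K$ is Taylor-expanded, while cross and regular contributions are $O(|\ln\eps|^{-1})$. A Chebyshev-type bound then places the mass $\gamma_i$ inside $B(z_i(t),r_\eps)$ up to error $O(1/\ln|\ln\eps|)$, proving statement $(i)$.

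The main obstacle is statement $(ii)$, the strong radial localization. The decisive observation is that the rotational invariance of $G_K$ forces the self-induced \emph{radial} velocity of a purely radial profile to vanish, so for an asymmetric blob it is controlled by the tangential spread itself. I would track a radial moment $Q_i(t)=\int f(|x|-|z_{i,0}|)\omega_i^\eps\,\dd x$ with $f$ even, convex, and suitably growing, show that $\dot Q_i$ inherits this smallness from the self-interaction (the cross and regular terms contributing only an $O(|\ln\eps|^{-1})$ radial drift), and run a Marchioro-type bootstrap in which the current radial concentration scale of $\omega_i^\eps$ feeds back through $\dot Q_i$ to sharpen that same scale. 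The restriction $\kappa<1/4$ will reflect the finite number of bootstrap iterations available before constants degenerate. A standard continuity-in-$t$ argument then closes $\tau_\eps=T$. Quantifying precisely how the residual non-radial asymmetry of $\omega_i^\eps$ together with the boundary correction $S_{K,\mathcal{U}}$ generate radial velocity, and iterating enough times to extract the $|\ln\eps|^{-\kappa}$ rate without losing too much per step, is the technically delicate part of the argument.
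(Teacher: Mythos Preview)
Your overall architecture is right and matches the paper: define $\omega_i^\eps$ by pushforward, introduce an exit time based on radial confinement of the supports, reduce to one blob in an exterior field, and close by the Marchioro-type iterative bootstrap for the strong radial localization. Your handling of the cross terms and of $S_{K,\mathcal{U}}$ is also correct.

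There is, however, a genuine gap in the way you extract the leading-order motion. You write that ``symmetrization in $(x,y)$ together with a Taylor expansion around $B_i$ extracts $\gamma_i^2|\ln\eps|\,\nu_i B_i^\perp$ at leading order''. The Taylor expansion of the smooth prefactor $\frac{x^\perp}{2|X|^2}H(x,y)$ around $B_i$ is fine and produces corrections of size $O(\sqrt{I^\eps})\cdot|\ln\eps|$. But what remains after freezing the prefactor is
\[
\frac{B_i^\perp}{2|B_i^\eps|^2}\iint \ln|\mathcal{T}(x)-\mathcal{T}(y)|\,\omega_i^\eps(x,t)\omega_i^\eps(y,t)\,\dd x\,\dd y,
\]
and to identify this integral as $-\gamma_i^2|\ln\eps|+O(1)$ at positive times you cannot simply invoke the ``precise prefactor of the logarithmic singularity'': that prefactor only gives you $-\gamma_i^2|\ln\eps|$ if the blob is still of size comparable to $\eps$, which is exactly what you are trying to prove. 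The rearrangement inequality yields only one direction (a lower bound on the integral); the other direction requires an additional conserved quantity. The paper closes this by showing that the energy $E^\eps(t)=-\iint \mathcal{G}_{K,\mathcal{U}}\,\omega^\eps\omega^\eps$ satisfies $E^\eps(t)=E^\eps(0)+O(1)$ (Lemma~4.7), using the almost-Lipschitz structure of $F^\eps$, and then combines this with the conservation of the radial moment $\int|x|^2\omega^\eps$ (Lemma~4.4, again from $v_L^\eps\cdot x=0$) to obtain a variance bound on $\psi^\eps$ (Corollary~4.8). It is this variance bound, not a Taylor expansion, that lets one replace $\psi^\eps(x,t)$ by its $\omega^\eps$-average in both $\dot b^\eps$ and $\dot I^\eps$; without it the $v_L^\eps$ contribution to $\dot I^\eps$ is a priori of size $O(1)$ rather than $O(I^\eps+|\ln\eps|^{-1})$, and the Gronwall loop for $(b^\eps,I^\eps)$ does not close. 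Your proposal contains no analogue of this energy/variance step, and it cannot be absorbed into ``corrections controlled by the blob radius'' since it concerns the leading coefficient itself.
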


From Theorem~\ref{theo:main}, after reconstructing the 3D flow from the 2D solution, we indeed see that for any concentrated initial vorticity, the solution remains close to the translating rotating helices given by \eqref{eq:BF}. The velocities $\nu_i$ are coherent with this model and with other results on the subject (see \cite[Equation (3.1)]{Cao_Wan_2022_Structure_of_Green} for instance, where their $\nu$ has the opposite sign but their matrix of rotation is clockwise).

We recall that the main difference of Theorem~\ref{theo:main} with the results obtained in \cite{DavilaPinoMussoWei22} is that we prove the localization for a much larger class of initial data. However the localization in Theorem~\ref{theo:main} is only weak in the direction of the movement, while \cite{DavilaPinoMussoWei22} obtains a strong confinement in both directions due to the choice of a well prepared initial data. Due to this weak confinement, we are restricted to study helices of different radii which excludes in particular the case of polygonally distributed helices, covered in \cite{DavilaPinoMussoWei22}.

This weak confinement in the direction of the movement is a quite natural limitation. Indeed, for general initial data, even in the usual planar 2D case, some filamentation may happen in bounded time, namely some vorticity may be driven away from the core of the vortex. In 3D, this vorticity would then slow down compared to the core of the filament (recall that the leading order movement is due to the self interaction of the filament due to concentration and his curvature), which in turns drives the lost vorticity further away.

\medskip

Our strategy is to follow the techniques of \cite{Mar3, Mar2} and developed by \cite{Hientzsch_Lacave_Miot_2022_Dynamics_of_PV_for_the_lake_eq}. In particular, in order to lighten the computations, we obtain most estimates by focusing on a single filament and considering the influence of the other points as an exterior velocity field. This approach has already been described in detail in \cite{livre-jaune} and has been used in many works including \cite{Mar3,Mar2,Hientzsch_Lacave_Miot_2022_Dynamics_of_PV_for_the_lake_eq,MarPul93}. This is only a notation and the exterior field could be replaced by its real expression, given in \eqref{def:F}, at any time in the proof. 

The plan of the paper is the following. In Section~\ref{sec:helical}, we recall and establish properties of the reduction to the 2D problem. In particular, in Proposition~\ref{prop:Biot_Savart} we derive an explicit formula for the singular part of the Biot-Savart kernel for this problem. In Section~\ref{sec:reduction}, we set up the proof of Theorem~\ref{theo:main} and introduce the usual notations needed to focus on the motion of a single vortex filament in an exterior field. Section~\ref{sec:single_vortex} is dedicated to this reduced model. We control different quantities such as the energy and vorticity moments of the solution to derive the dynamics and obtain the localization results. Crucially, the estimates do not depend on which filament we focused on. Section~\ref{sec:conclusion} concludes the proof of Theorem~\ref{theo:main}, by coming back to the full expression of $N$ interacting filament.

\medskip

\emph{Remarks on notation.} Except in Subsection~\ref{subsec:2D}, $x$ will from now on denote a point $(x_1,x_2)$ in $\R^2$, and we save then the notation $X$ for the following definition: $|X| := \sqrt{|x|^2 + h^2}$. Similarly, $y$, $z$, and $b^\eps(t)$ will be points in $\R^2$ and $|Y|$, $|Z|$, and $|B^\eps(t)|$ should be understood according to the previous definition. Unless specified otherwise, the integrals are always on $\mathcal{U}$. The values of the constants named $C$ are always irrelevant and may change from line to line. The constants $C$ are allowed to depend on $\mathcal{U}$ and $h$ without further mention since those objects are fixed once and for all.

\section{The helical symmetry framework}\label{sec:helical}

\subsection{Some known facts on the 2D reduction and the well-posedness of the 3D Euler in helical symmetry}
\label{subsec:2D}

Let $\Omega$ be a helical domain with 2D cross-section $\mathcal {U}$. In order to use known results on well-posedness for the three-dimensional Euler equation \eqref{eq:Euler3D}, we shall always assume in this section that $\mathcal{U}$ is simply connected, bounded and has $C^{1,1}$ boundary. Let an initial vector field $U_0$ be smooth and without helical swirl (i.e. satisfying $(U_0\cdot \xi)(x)=0$, for all $x\in \Omega$ with $\xi$ defined in \eqref{def:xi}). It was proved in \cite{EttingerTiti} that any smooth helical solution of \eqref{eq:Euler3D} remains without helical swirl for positive times. In this case, the three-dimensional vorticity $\curl U$ is related to a scalar quantity $\omega$ via \eqref{def:omega}, and \eqref{eq:Euler3D} reduces to System~\eqref{eq:Helico2D} for $\omega$, with $K$ defined by \eqref{def:K}.

In domains with bounded cross-section, global well-posedness for smooth helical solutions of \eqref{eq:Euler3D} was obtained by Dutrifoy \cite{Dutrifoy}, whereas global well-posedness of weak (bounded) solutions of \eqref{eq:Helico2D} was proved by Ettinger and Titi \cite{EttingerTiti}. For the whole space $\R^3$, Bronzi, Lopes and Lopes \cite{BronziLopes} established the global existence of a weak solution for $\curl U_0\in L^p_c(\R^3)$. Such a result was extended in \cite{JiuLiNiu17} for $\curl U_0\in L^1\cap L^p(\R^3)$ (for $p\in (1,\infty]$).

Finally, Guo and Zhao used a Lagrangian method to establish in \cite{GuoZhao23} global existence and uniqueness of the weak helical solution without swirl for \eqref{eq:Euler3D} when the initial vorticity $\curl U_0\in L^1_1\cap L^\infty_1(\R^3)$. They also proved that the solution to the corresponding 2D reduction is a Lagrangian solution, namely it is constant along the characteristics of the flow associated to the velocity field.

\medskip

We shall consider here the following definition of a weak solution to
 \eqref{eq:Helico2D}, which is mainly inspired\footnote{The definition of weak solution in \cite{EttingerTiti} slightly differs from the one of the present paper, because it is given only in terms of $\Psi$, but it can be straightforwardly proved that it coincides with the one given below for a weak solution.} by \cite[Definition 3.10]{EttingerTiti}. Before stating our definition, we need to introduce the following operator
 \begin{equation} \label{def:L}
 \mathcal{L} \Psi := \div (K \nabla \Psi)
 \end{equation}
 for $\Psi$ an integrable function in $\mathcal{U}$.
\begin{defin} \label{defi:weaksol2D}
Let $\omega_0\in L^\infty(\mathcal{U})$. Let $\Psi_0\in W^{2,1}\cap H^{1}_0(\mathcal{U})$ be the unique solution of $\mathcal{L} \Psi_0=\omega_0$. We set $v_0=\nabla^\perp \Psi_0$. We say that $(v, \omega)$ is a weak bounded solution of \eqref{eq:Helico2D} on $[0,T]$ with initial condition $(v_0,\omega_0)$ if:
\begin{enumerate}
\item There exists $\Psi \in L^\infty([0,T], W^{2,1}(\mathcal{U}))$ with $\Psi=0$ a.e. on $\partial\mathcal{U}\times [0,T]$ such that
$v=\nabla^\perp \Psi$ and $\omega= \mathcal{L} \Psi$ a.e. in $\mathcal{U}\times [0,T]$;
\item We have $\omega \in L^\infty(\mathcal{U}\times [0,T])$;
\item For all test function $\Phi$ in $C^\infty_c(\mathcal{U}\times [0,T))$, we have
 \begin{equation*}
 - \int_{\mathcal{U}} \Phi(x,0) \omega_0(x) \,\dd x 
 = \int_{0}^T \int_{\mathcal{U}} \omega(x,s) \left(\partial_{t}\Phi + v\cdot \nabla \Phi\right)(x,s) \,\dd x\dd s.
\end{equation*}
\end{enumerate}
\end{defin}

As already mentioned, existence and uniqueness of the weak bounded solution as in Definition~\ref{defi:weaksol2D} for all $T>0$ is proved in \cite[Theorem 3.11]{EttingerTiti}. Moreover, the velocity field $v=\nabla^\perp\Psi$ satisfies the Calder\'on-Zygmund inequality \cite[Corollary 3.8]{EttingerTiti}: we have $\|\nabla v(\cdot,t)\|_{L^p}\leq C p \|\omega(\cdot,t)\|_{L^p}\leq C p\|\omega(\cdot,t)\|_{L^\infty}$ for all $2\leq p<\infty$. Observing that $\div v=0$ a.e. on $\mathcal{U}\times [0,T]$ and that $v\cdot n=0$ a.e. on $\partial \mathcal{U}\times [0,T]$, we may apply classical results by DiPerna and Lions \cite[p. 546]{diperna-lions} on the theory of linear transport equations and Lagrangian flows, see also \cite[Theorem 1, Theorem 2]{desjardins-96}. We infer that
there exists a unique measure-preserving Lagrangian flow $X:(x,t,t_0)\in \mathcal{U}\times [0,T]\times [0,T]\mapsto X(x,t,t_0)\in \mathcal{U}$ associated to $v$. Moreover, denoting further $X(x,t)=X(x,t,0)$ for simplicity,
the unique weak solution $\omega\in L^\infty(\mathcal{U}\times [0,T])$ to the transport equation $\partial_t \omega+v\cdot \nabla \omega=0$ satisfies \begin{equation*}
 \omega(\cdot,t)=X(\cdot,t)_\#\omega_0,\quad \forall t\in [0,T]
\end{equation*}
 in the sense that for all $\varphi\in C_c(\mathcal{U})$ we have $\int_{\mathcal{U}}\omega(x,t)\varphi(x)\,\dd x=\int_{\mathcal{U}}\omega_0(x)\varphi(X(x,t))\,\dd x$. 

By Morrey's inequality, Cald\'eron-Zygmund's estimate implies that $|v(x,t)-v(y,t)|\leq C_T p|x-y|^{1-2/p}$, for all $p>1$. Setting $p=|\ln |x-y||$ for $|x-y|<e$, we then get that $v$ is log-lipschitz locally uniformly in time: $|v(x,t)-v(y,t)|\leq C_T |x-y|(1+|\ln |x-y|)$ for $t\in [0,T)$ and $x,y\in \mathcal{U}$. By Cauchy-Lipschitz theorem, we infer that for a.e $x\in \mathcal{U}$ the curve $t\mapsto X(x,t)$ is the unique solution in $C^1([0,T) ; \mathcal{U})$ to the ODE
\[
\frac{\dd X(x,t)}{\dd t}=v(X(x,t),t), \quad X(x,0)=x.
\]
Let us note that the divergence free condition immediately implies the conservation of the $L^p$ norm: $\|\omega(\cdot,t)\|_{L^p}=\|\omega_0\|_{L^p}$, for $t\in [0,T]$, for $p\in [1,\infty]$.

In the setting of \eqref{hyp:omega0}, the fact that the log-lipschitz constant of the velocity field $v^\eps$ depends on $\|\omega^{\varepsilon}(\cdot,t)\|_{L^\infty}=\|\omega^{\varepsilon}_0\|_{L^\infty}$, diverging possibly as $\varepsilon^{-2}$, constitutes a major difficulty. Indeed, this does not allow to control the distance between the supports of the components $\omega_i^\eps$ and $\omega_j^\eps$ uniformly in $\eps$. Such a uniform control will be included in the forthcoming definition of $T_{\varepsilon}$, see \eqref{def:T_eps}, and one of the main consequences of the strong localization will be to state that $T_{\varepsilon}=T$.

\begin{rem}\label{rem:omega_i}
 In view of the relation in terms of the flow map, it is natural to define the decomposition of $\omega^{\varepsilon}$ in Theorem~\ref{theo:main} as the transport of the decomposition of the initial data:
 \[\omega^{\varepsilon}_i(\cdot,t):=X_\eps(\cdot,t)_\#\omega^\epsilon_{i,0}
 =\omega^\epsilon_{i,0}\circ X_{\varepsilon}(\cdot,t)^{-1}.\]
\end{rem}

\subsection{Some useful properties of the matrix {\it K}}

We recall that the definition of $K$ is given at \eqref{def:K}, that $h > 0$ is given and that for every $x \in \R^2$ we denote by $|X| := \sqrt{|x|^2 + h^2}$. We observe that we have the following decomposition:
\begin{equation*}
 \forall x \in \R^2, \quad K(x) = I_2- \frac{1}{h^2+|x|^2} N(x)
\text{ with }
 N(x) = \begin{pmatrix} x_1^2 & x_1x_2 \\ x_1x_2 & x_2^2 \end{pmatrix}.
\end{equation*}
The eigenvalues of $N(x)$ are 0 and $|x|^2$ associated respectively to the eigenvectors $x^\perp$ and $x$. Moreover,
$$N(x)^2=|x|^2 N(x).$$
Consequently, we have the following lemma.
\begin{lemme}\label{lem:K_def_pos}
For every $x \in \R^2$, $K(x)$ is a symmetric positive-definite matrix, with eigenvalues 1 and $h^2/|X|^2$ so that
\begin{equation*}
 \det K(x) = \frac{h^2}{|X|^2}.
\end{equation*}
In particular, $K$ is uniformly elliptic in the bounded domain $\mathcal{U}$.
\end{lemme}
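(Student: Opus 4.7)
The plan is to exploit directly the decomposition $K(x) = I_2 - |X|^{-2} N(x)$ that has already been stated just above the lemma, together with the spectral information the authors gave for $N(x)$. Symmetry of $K(x)$ is read off from the explicit entries, so the only real work is to compute the eigenvalues; the determinant and uniform ellipticity will then follow for free.

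For $x \neq 0$, I would use the orthogonal basis $\{x, x^\perp\}$ of $\R^2$. The recalled identities say $N(x)\, x^\perp = 0$ and $N(x)\, x = |x|^2\, x$, so
\begin{equation*}
K(x)\, x^\perp \;=\; x^\perp, \qquad K(x)\, x \;=\; \Bigl(1 - \frac{|x|^2}{|X|^2}\Bigr) x \;=\; \frac{h^2}{|X|^2}\, x.
\end{equation*}
Hence the eigenvalues of $K(x)$ are $1$ and $h^2/|X|^2$, both strictly positive, which gives symmetry plus positive-definiteness and
\begin{equation*}
\det K(x) \;=\; 1 \cdot \frac{h^2}{|X|^2} \;=\; \frac{h^2}{|X|^2}.
\end{equation*}
The degenerate point $x=0$, where the basis $\{x, x^\perp\}$ breaks down, is handled separately: $N(0)=0$ gives $K(0)=I_2$, whose eigenvalues are both equal to $1$, which is consistent with $h^2/|X(0)|^2 = h^2/h^2 = 1$.

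For the last claim, since $\mathcal{U}$ is bounded there exists $R>0$ with $|x| \le R$ for all $x \in \overline{\mathcal{U}}$, so $|X|^2 \le R^2 + h^2$ and therefore the smallest eigenvalue satisfies $h^2/|X|^2 \ge h^2/(R^2+h^2) > 0$ uniformly on $\mathcal{U}$, yielding uniform ellipticity. There is no genuine obstacle in this argument; the only point requiring a moment of care is the case $x=0$ where the eigenvector $x$ vanishes, but this is immediately resolved by direct evaluation of $K(0)$.
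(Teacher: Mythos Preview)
Your proof is correct and follows exactly the route the paper intends: the lemma is stated immediately after the decomposition $K(x)=I_2-|X|^{-2}N(x)$ and the spectral data for $N(x)$, with the word ``Consequently'' in lieu of a written proof. Your argument spells out precisely these consequences, including the care at $x=0$ and the uniform lower bound on $\mathcal{U}$.
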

Setting 
\begin{equation*}
 \Lambda(x) = I_2 +\frac{1}{h|X|+h^2}N(x)=I_2 +\frac{1}{h|X|+h^2}
 \begin{pmatrix}x_1^2 & x_1 x_2 \\
 x_1x_2 & x_2^2\end{pmatrix},
\end{equation*}
we can check that its inverse is given by
\begin{equation*}
 \Lambda(x)^{-1} =I_2-\frac{1}{h|X|+|X|^2} N(x).
\end{equation*}
and that
\[
(\Lambda(x)^{-1})^2=K(x), \quad \forall x \in \R^2.
\]

In order to study the elliptic problem $\mathcal{L}\Psi = \omega$ on $\mathcal{U}$, it will be useful to introduce a $C^1$-diffeomorphism $T$ such that $\Diff \mathcal{T}$ is proportional to $K^{-1/2}$.

\begin{lemme}\label{lem:existence_difféo}
There exists a radial function $f \in C^\infty\big(\R^2,[1,+\infty)\big)$ and a $C^1$-diffeomorphism $\mathcal{T} : \R^2 \to \R^2$ such that $\Diff \mathcal{T}(x) = f(x) K(x)^{-1/2} = f(x) \Lambda(x)$.
\end{lemme}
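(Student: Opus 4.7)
The plan is to look for a radial diffeomorphism of the form $\mathcal{T}(x)=F(|x|)\,x$ and then set $f(x)=F(|x|)$. A direct computation gives, for $x\neq 0$,
\begin{equation*}
 \Diff \mathcal{T}(x) = F(|x|)\, I_2 + \frac{F'(|x|)}{|x|}\,N(x),
\end{equation*}
and matching this with the expression
\begin{equation*}
 f(x)\Lambda(x) = F(|x|)\,I_2 + \frac{F(|x|)}{h|X|+h^2}\,N(x)
\end{equation*}
reduces the problem to finding a smooth, positive radial function $F$ satisfying the scalar ODE
\begin{equation*}
 \frac{F'(r)}{F(r)} = \frac{r}{h\bigl(\sqrt{r^2+h^2}+h\bigr)}, \qquad r\geq 0.
\end{equation*}

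The right-hand side is a smooth even function of $r\in\R$, so the ODE admits a unique smooth positive solution on $\R$ for any positive initial value $F(0)$. An explicit antiderivative via the substitution $u=\sqrt{r^2+h^2}$ yields, taking $F(0)=1$,
\begin{equation*}
 F(r) = \frac{2h}{\sqrt{r^2+h^2}+h}\,\exp\!\left(\frac{\sqrt{r^2+h^2}-h}{h}\right),
\end{equation*}
but for the proof only the existence and smoothness are needed. Since the right-hand side of the ODE is nonnegative, $F$ is non-decreasing on $[0,\infty)$, hence $F(r)\geq F(0)=1$ for all $r$. Because $F$ is even and smooth on $\R$, the function $f(x):=F(|x|)$ belongs to $C^\infty(\R^2,[1,+\infty))$, which handles smoothness at the origin.

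It remains to verify that $\mathcal{T}(x)=F(|x|)x$ is a $C^1$-diffeomorphism of $\R^2$. Smoothness follows from that of $x\mapsto F(|x|)$. For bijectivity, set $g(r)=rF(r)$: then $g(0)=0$, $g(r)\to+\infty$ as $r\to\infty$, and $g'(r)=F(r)+rF'(r)\geq F(r)\geq 1>0$, so $g$ is a smooth increasing bijection of $[0,\infty)$ onto itself, from which one infers that $\mathcal{T}$ is a bijection of $\R^2$. Finally, by Lemma~\ref{lem:K_def_pos} the matrix $K(x)^{-1/2}=\Lambda(x)$ is everywhere invertible, so $\Diff\mathcal{T}(x)=f(x)\Lambda(x)$ is invertible at every point, and the inverse function theorem yields that $\mathcal{T}^{-1}$ is also smooth.

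The only mildly subtle point is to reconcile the a priori singular character of the ODE at $r=0$ (the computation of $\Diff\mathcal{T}$ above is valid only for $x\neq 0$) with the desired global smoothness; this is ensured by the fact that the expression $\tfrac{F(|x|)}{h|X|+h^2}N(x)$ and $N(x)/|x|^2\cdot(\text{smooth})$ agree away from the origin, and the ODE one obtains after multiplication by $|x|$ has a smooth even right-hand side, so the solution extends smoothly across $r=0$. No other real obstacle is expected.
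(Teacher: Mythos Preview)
Your proof is correct and follows essentially the same route as the paper's: both seek $\mathcal{T}(x)=f(x)\,x$ with $f$ radial, reduce to a scalar ODE, and conclude by monotonicity plus the inverse function theorem. One small slip: the right-hand side of your ODE for $F$ is \emph{odd} in $r$, not even, and it is this oddness that makes the solution $F$ even and hence $x\mapsto F(|x|)$ smooth at the origin; the paper sidesteps this parity discussion altogether by parametrizing $f(x)=\rho(|x|^2)$, so that the Jacobian computation is valid directly at $x=0$ without any separate argument.
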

\begin{proof}
Let $f(x) = \rho(x_1^2 + x_2^2)$ where $\rho \in C^1(\R_+,\R_+^*)$ will be chosen later. Now let 
\begin{equation*}
 \forall x \in \R^2, \quad \mathcal{T}(x) := f(x) x = \rho(x_1^2+x_2^2)\begin{pmatrix}x_1 \\ x_2\end{pmatrix}.
\end{equation*}
We compute:
\begin{align*}
 \Diff \mathcal{T}(x) &= \rho(x_1^2+x_2^2) I_2 + 2\rho'(x_1^2+x_2^2) \begin{pmatrix}x_1^2 & x_1 x_2 \\ x_1 x_2 & x_2^2\end{pmatrix} 
 = \rho(x_1^2+x_2^2) I_2 + 2\rho'(x_1^2+x_2^2)N(x) \\
 &= \rho(x_1^2+x_2^2) \Big( I_2 + \frac{2\rho'(x_1^2+x_2^2) }{\rho(x_1^2+x_2^2)} N(x) \Big).
\end{align*}
This means that $\Diff \mathcal{T}(x) = f(x) \Lambda(x)$ if and only if 
\begin{equation*}
 \frac{2\rho'(x_1^2+x_2^2) }{\rho(x_1^2+x_2^2)} = \frac{1}{h|X|+h^2},
\end{equation*}
which also writes
\begin{equation}\label{eq:contrainte_rho}
 \frac{\rho'(s)}{\rho(s)} = g(s),\quad \forall s \ge 0,
\end{equation}
with 
\begin{equation*}
g(s) = \frac{1}{2(h \sqrt{s + h^2} + h^2)}.
\end{equation*}
We then take
\begin{equation*}
 \rho(s) = \exp \left( \int_0^s g(u) \dd u\right) \ge 1,\quad \forall s \ge 0,
\end{equation*}
and check that \eqref{eq:contrainte_rho} holds true, hence that $\Diff \mathcal{T}(x) = f(x) \Lambda(x)$. 

Now we need to prove that $\mathcal{T}$ is a $C^1$-diffeomorphism. By construction, it is clear that it is a $C^1$ map. Now we assume that for some $x,y \in \R^2$ we have $\mathcal{T}(x)=\mathcal{T}(y)$. By the definition of $\mathcal{T}$ this implies first that $|x|\rho(|x|^2)=|y|\rho(|y|^2)$, with $s\mapsto s\rho(s^2)$ strictly increasing, thus $|x|=|y|$. Finally, we get $x=y$, therefore $\mathcal{T}$ is injective. Moreover, by construction, the matrix $\Diff \mathcal{T}(x) = f(x)\Lambda(x)$ is invertible for every $x \in \R^2$, so by the global inverse function theorem, $\mathcal{T}$ is a $C^1$-diffeomorphism from $\R^2$ to $\mathcal{T}(\R^2)$. Finally, $g(s) \sim \frac{1}{2h\sqrt{s}}$ as $s \to \infty$ so $\rho(s) \to +\infty$, and therefore, $\mathcal{T}(\R^2) = \R^2$.
\end{proof}

The function $\mathcal{T}$ constructed in the previous proof belongs to $C^2(\R^2)$. Using also $\mathcal{T}^{-1}\in C^1(\R^2)$, we infer from the mean value theorem that there exists $C > 0$ such that
\begin{equation}\label{eq:Diff_T_Lip}
 |\Diff \mathcal{T}(x) - \Diff \mathcal{T}(y)| \le C |x-y| \le C^2 |\mathcal{T}(x)-\mathcal{T}(y)|\le C^3 |x-y|, \quad \forall x,y \in B(0,R_{\cal U}).
\end{equation}

\subsection{Expansion of the Biot-Savart law}

This section is devoted to the construction of a suitable decomposition of the solution of the problem $\mathcal{L} \Psi = \omega$. Indeed, we expect that $v^\epsilon =\nabla^\perp \Psi^\epsilon$ diverges as $\mathcal{O}(\epsilon^{-1})$ (at least pointwise) when the vorticity $\omega^\epsilon$ is concentrated. Thus it will be crucial to obtain an explicit formula for the most singular term to find some cancellations by symmetry properties. This kind of expansion will also be the key to find the term of order $|\ln \epsilon|$, which will not give symmetry cancellation and will give rise to the motion of the vortex filament. Such an expansion is one of the main tool in the studies of the vortex rings (see \cite{Mar2, Mar3}) and of the lake vortices (see \cite{Hientzsch_Lacave_Miot_2022_Dynamics_of_PV_for_the_lake_eq}). For helical flows, an expansion was derived by Cao and Wan in \cite{Cao_Wan_2022_Structure_of_Green} such that the remainder term is bounded in $C^1$ which is not enough for our study. Therefore, we propose an independent proof where we find a different leading term, such that the remainder will be bounded in $C^2$ uniformly in $\eps$.

As we expect that such an explicit expansion can be useful for other problems, we establish the following proposition with a general matrix $K$ satisfying only its relevant properties: those established in our case in Lemmas~\ref{lem:K_def_pos} and \ref{lem:existence_difféo}.

\begin{prop}\label{prop:Biot_Savart}
Let $\mathcal{U} \subset \R^2$ be a bounded simply-connected domain with $C^{1,1}$ boundary. Let $K\in C^1(\R^2;\mathcal{M}_2(\R))$ such that
\begin{enumerate}[(i)]
 \item For every $x \in \overline{\mathcal{U}},$ $K(x)$ is a symmetric positive-definite matrix.

 \item There exists a $C^1$-diffeomorphism $\mathcal{T} : \R^2 \to \R^2$ and a function $f : \R^2 \to \R_+^*$ such that $\Diff \mathcal{T}(x) = f(x) K^{-1/2}(x)$.
\end{enumerate}
Then, for every $\omega \in L^\infty_c(\mathcal{U})$ the unique solution $\Psi \in H^1_0(\mathcal{U})$ of $\cal L \Psi =\omega$ belongs to $\bigcap_{1 \le p <\infty}W^{2,p}(\mathcal{U})$ and is given by the expression
\begin{equation}\label{eq:val_psi}
 \Psi(x) = \int_{\mathcal{U}} \mathcal{G}_{K,\mathcal{U}}(x,y)\omega(y) \dd y
\qquad \text{where}\quad
 \mathcal{G}_{K,\mathcal{U}} := G_K + S_{K,\mathcal{U}},
\end{equation}
with
\begin{equation}\label{def:G_K-0}
 \forall x,y \in \mathcal{U}, \; x \neq y, \quad G_K(x,y) = \frac{1}{2\pi} \Big( \det K(x) \det K(y) \Big)^{-1/4} \ln|\mathcal{T}(x)-\mathcal{T}(y)|,
\end{equation}
and for all $y\in \mathcal{U}$, $x \mapsto S_{K,\mathcal{U}}(x,y)=S_{K,\mathcal{U}}(y,x)$ belongs to $\bigcap_{1 \le p <\infty}W^{2,p}(\mathcal{U})$ and is the unique solution in $H^1(\mathcal{U})$ of 
\begin{equation}\label{eq:pb_S}
 \left\{\begin{aligned}
& \mathcal{L} S_{K,\mathcal{U}}(x,y) = 
 -\frac{(\det K(y))^{-1/4} }{2\pi}\ln|\mathcal{T}(x)-\mathcal{T}(y)| \nabla\cdot\left(K(x)\nabla(\det K(x))^{-1/4} \right)\quad \forall x \in \mathcal{U}, \\ 
& S_{K,\mathcal{U}}(x, y ) = - G_{K}(x,y) \quad 
 \forall x \in \partial\mathcal{U}.
\end{aligned}\right.
\end{equation}
\end{prop}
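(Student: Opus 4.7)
Define $A(x) := (\det K(x))^{-1/4}$; using $\Diff \mathcal{T} = f K^{-1/2}$ one has $\det \Diff \mathcal{T} = f^2 (\det K)^{-1/2}$, hence $A(x) = \sqrt{\det \Diff \mathcal{T}(x)}/f(x)$, which is precisely the prefactor appearing in \eqref{eq:pb_S}. Set also $B(x,y) := \ln|\mathcal{T}(x)-\mathcal{T}(y)|$, so that $G_K(x,y) = \tfrac{1}{2\pi} A(x)A(y) B(x,y)$. For $y\in\mathcal{U}$ fixed, the Leibniz rule for $\mathcal{L}$ (valid because $K$ is symmetric) gives
\[
\mathcal{L}_x G_K(\cdot,y) = \frac{A(y)}{2\pi}\Bigl[A(x)\,\mathcal{L}_x B(\cdot,y) + B(\cdot,y)\,\mathcal{L} A + 2\,\nabla A \cdot K\nabla_x B\Bigr].
\]
The plan is to compute $\mathcal{L}_x B(\cdot,y)$ as a distribution, isolate its Dirac mass at $y$, and show that the remaining regular part of $A\,\mathcal{L}_x B$ cancels exactly the cross term $2\nabla A\cdot K\nabla_x B$, leaving only a Dirac mass plus the term $B\,\mathcal{L} A$, which is precisely the source defining $S_{K,\mathcal{U}}$ in \eqref{eq:pb_S}.

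\textbf{Key distributional computation.} Perform the change of variables $z=\mathcal{T}(x)$, and set $\tilde B(z)=\ln|z-\mathcal{T}(y)|$ and $\tilde A(z)=A(\mathcal{T}^{-1}(z))$. Since $\Diff \mathcal{T}$ is symmetric, one has $K\nabla_x B = f K^{1/2}\nabla_z\tilde B$; pairing with a test $\phi\in C_c^\infty(\mathcal{U})$ and using the Jacobian $\det \Diff \mathcal{T} = f^2 A^2$ yields
\[
\langle \mathcal{L}_x B(\cdot,y),\phi\rangle = -\int K\nabla_x B\cdot \nabla_x \phi\,\dd x = -\int \tilde A^{-2}\,\nabla_z \tilde B\cdot\nabla_z\tilde\phi\,\dd z.
\]
Integrating by parts in $z$ via $\Delta_z \tilde B = 2\pi\,\delta_{\mathcal{T}(y)}$ and transforming the remainder back to the $x$ variable produces the distributional identity
\[
\mathcal{L}_x B(\cdot,y) \;=\; \frac{2\pi}{A(y)^2}\,\delta_y \;-\; \frac{2}{A(x)}\,\nabla_x B\cdot K\nabla A.
\]
Inserting this into the Leibniz expansion, using $A(x)\delta_y = A(y)\delta_y$ and the symmetry $\nabla B\cdot K\nabla A = \nabla A\cdot K\nabla B$, the two regular terms cancel and
\[
A(x)\,\mathcal{L}_x B(\cdot,y) + 2\,\nabla A\cdot K\nabla_x B \;=\; \frac{2\pi}{A(y)}\,\delta_y,
\]
so that
\[
\mathcal{L}_x G_K(\cdot,y) \;=\; \delta_y \;+\; \frac{A(y)}{2\pi}\,B(\cdot,y)\,\mathcal{L} A.
\]

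\textbf{Construction of $S_{K,\mathcal{U}}$ and conclusion.} The right-hand side of \eqref{eq:pb_S} equals $-\tfrac{A(y)}{2\pi}B(\cdot,y)\mathcal{L} A$, which belongs to $L^p(\mathcal{U})$ for every $p<\infty$ since $B(\cdot,y)\in L^p_{\loc}$ and $\mathcal{L} A$ is bounded on $\mathcal{U}$; the boundary data $-G_K(\cdot,y)$ is smooth on $\partial\mathcal{U}$. Uniform ellipticity of $K$ on $\overline{\mathcal{U}}$ (Lemma~\ref{lem:K_def_pos}) and the $C^{1,1}$ regularity of $\partial\mathcal{U}$ together with Lax-Milgram yield existence and uniqueness of $S_{K,\mathcal{U}}(\cdot,y)\in H^1(\mathcal{U})$, and standard Calderón-Zygmund elliptic regularity upgrades this to $S_{K,\mathcal{U}}(\cdot,y)\in\bigcap_{1\le p<\infty}W^{2,p}(\mathcal{U})$. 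By construction $\mathcal{L}_x\mathcal{G}_{K,\mathcal{U}}(\cdot,y)=\delta_y$ with $\mathcal{G}_{K,\mathcal{U}}(\cdot,y)=0$ on $\partial\mathcal{U}$. Then, for $\omega\in L^\infty_c(\mathcal{U})$, Fubini shows that $\Psi(x):=\int \mathcal{G}_{K,\mathcal{U}}(x,y)\omega(y)\,\dd y$ solves $\mathcal{L}\Psi=\omega$ weakly with $\Psi=0$ on $\partial\mathcal{U}$; its $W^{2,p}$ regularity for every $p<\infty$ follows from elliptic regularity applied to the $L^\infty$ source $\omega$ on the $C^{1,1}$ domain $\mathcal{U}$. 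Finally, the symmetry $S_{K,\mathcal{U}}(x,y)=S_{K,\mathcal{U}}(y,x)$ is inherited from the symmetry of the Dirichlet Green's function of the self-adjoint operator $\mathcal{L}$ combined with the manifest symmetry of $G_K$.

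\textbf{Main difficulty.} The crux is the distributional identity for $\mathcal{L}_x B$ and the ensuing cancellation of the Leibniz cross term. This cancellation is exactly what forces the specific prefactor $(\det K(x)\det K(y))^{-1/4}$ in $G_K$ and what makes the remainder $S_{K,\mathcal{U}}$ enjoy the $W^{2,p}$ (hence $W^{1,\infty}$) regularity needed in the sequel -- a level of regularity unavailable from the decomposition of \cite{Cao_Wan_2022_Structure_of_Green}; any other reasonable prefactor would leave a $|x-y|^{-1}$-type singularity in the remainder and ruin this bound.
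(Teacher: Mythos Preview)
Your proof is correct and follows essentially the same approach as the paper: both arguments perform the change of variables $z=\mathcal{T}(x)$ to extract the Dirac mass from $\Delta_z\ln|z-\mathcal{T}(y)|$, and both identify the leftover regular terms with the source in \eqref{eq:pb_S}. Your organisation via the Leibniz identity $\mathcal{L}(AB)=A\,\mathcal{L} B+B\,\mathcal{L} A+2\nabla A\cdot K\nabla B$ and the explicit distributional formula $\mathcal{L}_x B(\cdot,y)=\tfrac{2\pi}{A(y)^2}\delta_y-\tfrac{2}{A}\nabla_x B\cdot K\nabla A$ makes the key cancellation of the cross term transparent, whereas the paper carries out the same computation in integrated form (pairing with both a test function $\varphi$ and a smooth source $w$), splitting into the terms $A_{11},A_{12},A_{13},A_2$ and reassembling them at the end; the content is identical.
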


Before proving Proposition~\ref{prop:Biot_Savart}, let us observe that the boundary condition for $S_{K,\cal U}$ is imposed in order to comply with the boundary condition $\mathcal{G}_{K,\mathcal{U}}(x,y) = 0$ if $x \in \partial \cal U$, so that eventually $\Psi = 0$ on $\partial \mathcal{U}$. Moreover, by hypothesis $(ii)$, $G_K$ is also given by the expression
\begin{equation*}
 G_K(x,y) = \frac{1}{2\pi} \frac{\sqrt{\det \Diff \mathcal{T}(x)}\sqrt{ \det \Diff \mathcal{T}(y)}}{f(x)f(y)} \ln|\mathcal{T}(x)-\mathcal{T}(y)|.
\end{equation*}

In addition, we observe that if $K(x) = \frac{1}{b(x)} I_2$, with $b \geq c > 0$ then it satisfies the hypotheses of Proposition~\ref{prop:Biot_Savart} with $\cal T(x) = x$ and $f(x)=1/\sqrt{b(x)}$. The expression we obtain for the Green's kernel coincide with the one obtained for the lake equation as in \cite[Proposition 3.1]{dekeyser-vanSchaftingen_2020}. Taking $K = I_2$ gives the usual 2D Green's kernel.

\begin{proof}
Let $\omega \in L^\infty_c(\mathcal{U})$. By hypothesis $(i)$, the operator $\mathcal{L}$ given in \eqref{def:L} is uniformly elliptic with coefficients belonging to $C^0(\overline{\mathcal{U}})$. As $\omega \in \cap_{p\geq 1} L^p(\cal U)$, we know from \cite[ Theorem 9.15]{Gilbarg_Trudinger_2001_elliptic} that the unique variational solution belongs to $W^{2,p}(\mathcal{U})$ for every $p \in [1,+\infty)$.

Noticing that for every $V\Subset \cal U$ and $y \in V$ fixed, the right hand side term of the first equality in \eqref{eq:pb_S} belongs to $\cap_{p\geq 1} L^p(\cal U)$ whereas the boundary condition is $C^\infty(\partial\mathcal{U})$ (both uniformly with respect to $y\in V$), we also deduce by
\cite[Theorem 9.15]{Gilbarg_Trudinger_2001_elliptic}, that the
 unique solution $x\mapsto S_{K,\mathcal{U}}(x,y)$ to \eqref{eq:pb_S} belongs to $W^{2,p}(\cal U)$ for every $p \geq 1$, uniformly with respect to $y\in V$. In particular, the function $x\mapsto S_{K,\mathcal{U}}(x,y)$ is $C^1$ and bounded in $W^{1,\infty}(\cal U)$ uniformly with respect to $y\in V$. 

In order to justify many computations in the following, we consider first the case of a smooth source term which we denote by $w$. More precisely, we let $w\in C^\infty_{c}(\cal U)$ be fixed and let
\begin{equation*}
 \Psi_1(x) := \int_{\mathcal{U}} G_K(x,y) w(y) \dd y, \quad \Psi(x)=\Psi[w](x):= \int_{\mathcal{U}} (G_K+S_{K,\cal U})(x,y) w(y) \dd y.
\end{equation*}
We compute in the sense of distribution the quantity $\langle \mathcal{L} \Psi_1 \, , \, \varphi \rangle$. Let $\varphi \in C_{\text{c}}^\infty(\mathcal{U})$ be a test function, then the functions are regular enough to differentiate under the integral sign and to apply Fubini's theorem so that
\begin{align*}
 \langle \mathcal{L} \Psi_1 \, , \, \varphi \rangle & = -\int_{\mathcal{U}}\left[K(x) \nabla \Psi_{1}(x)\right]\cdot \nabla\varphi(x) \dd x =-\int_{\mathcal{U}}\int_{\mathcal{U}} \left[K(x) \nabla_x G_K(x,y)\right]\cdot \nabla\varphi(x) w(y) \dd x \dd y \\
 & = -\int_{\mathcal{U}} \frac{\sqrt{ \det \Diff \mathcal{T}(y)}}{2\pi f(y)} \int_{\mathcal{U}} \left[K(x) \nabla_x\left(\frac{\sqrt{\det \Diff \mathcal{T}(x)}}{f(x)} \ln|\mathcal{T}(x)-\mathcal{T}(y)|\right)\right] \cdot \nabla \varphi(x) \dd x \, w(y) \dd y \\
 & = -\int_{\mathcal{U}} \frac{\sqrt{ \det \Diff \mathcal{T}(y)}}{2\pi f(y)} \int_{\mathcal{U}} \left[K(x) \frac{\sqrt{\det \Diff \mathcal{T}(x)}}{f(x)} \Diff \mathcal{T}^T(x) \frac{\mathcal{T}(x)-\mathcal{T}(y)}{|\mathcal{T}(x)-\mathcal{T}(y)|^2} \right]\cdot \nabla \varphi(x) \dd x \, w(y) \dd y \\
 & \sp \sp -\int_{\mathcal{U}} \frac{\sqrt{ \det \Diff \mathcal{T}(y)}}{2\pi f(y)} \int_{\mathcal{U}} K(x) \nabla\left(\frac{\sqrt{\det \Diff \mathcal{T}(x)}}{f(x)}\right) \ln|\mathcal{T}(x)-\mathcal{T}(y)|\cdot \nabla \varphi(x) \dd x \, w(y) \dd y \\
 & := - \int_{\mathcal{U}} \frac{\sqrt{ \det \Diff \mathcal{T}(y)}}{2\pi f(y)}(A_1(y) +A_2(y)) w(y) \dd y.
\end{align*}
For the term $A_1$, let us notice first that $K$ and $\Diff \mathcal{T}$ are symmetric matrices so that
\begin{equation}\label{eq:nablacirc}
 \nabla \varphi(x) = \nabla (\varphi \circ \mathcal{T}^{-1} \circ \mathcal{T} )(x)= \Diff\mathcal{T}(x) \nabla(\varphi\circ \mathcal{T}^{-1})(\mathcal{T}(x))
\end{equation} and thus
\begin{align*}
 A_1(y) 
 & = \int_{\mathcal{U}} f(x)\sqrt{\det \Diff \mathcal{T}(x)} \frac{\mathcal{T}(x)-\mathcal{T}(y)}{|\mathcal{T}(x)-\mathcal{T}(y)|^2} \cdot \nabla(\varphi\circ \mathcal{T}^{-1})(\mathcal{T}(x)) \dd x \\
 &=\int_{\cal T(\mathcal{U})} f (\mathcal{T}^{-1}(z))\sqrt{\det \Diff \mathcal{T} (\mathcal{T}^{-1}(z)) } \frac{z - \mathcal{T}(y)}{|z-\mathcal{T}(y)|^2} \cdot \nabla (\varphi \circ \mathcal{T}^{-1})(z) |\det \Diff \mathcal{T}^{-1}(z)| \dd z
\end{align*}
where we have made the change of variable $z = \mathcal{T}(x)$.
Since 
\begin{equation}\label{eq:detDT-1}
 \det \Diff \mathcal{T}^{-1}(z) = \frac{1}{\det \Diff \mathcal{T} (\mathcal{T}^{-1}(z))}
\end{equation} 
we obtain that
\begin{align*}
 A_1(y) =& \int_{\cal T(\mathcal{U})} f (\mathcal{T}^{-1}(z)) \frac{z - \mathcal{T}(y)}{|z-\mathcal{T}(y)|^2} \cdot \nabla (\varphi \circ \mathcal{T}^{-1})(z) \sqrt{\det \Diff \mathcal{T}^{-1}(z)} \dd z \\
 = &\int_{\cal T(\mathcal{U})} \frac{z - \mathcal{T}(y)}{|z-\mathcal{T}(y)|^2}\cdot \nabla \left(\varphi \circ \mathcal{T}^{-1} \times \sqrt{\det \Diff \mathcal{T}^{-1} } \times f \circ \mathcal{T}^{-1} \right)(z) \dd z \\
 & - \int_{\cal T(\mathcal{U})} f (\mathcal{T}^{-1}(z)) \frac{z - \mathcal{T}(y)}{|z-\mathcal{T}(y)|^2} \varphi(\mathcal{T}^{-1}(z)) \cdot \nabla \left(\sqrt{\det \Diff \mathcal{T}^{-1}}\right)(z) \dd z \\
 & - \int_{\cal T(\mathcal{U})} \frac{z - \mathcal{T}(y)}{|z-\mathcal{T}(y)|^2} \varphi(\mathcal{T}^{-1}(z)) \sqrt{\det \Diff \mathcal{T}^{-1}(z)} \cdot \nabla \left(f \circ \mathcal{T}^{-1} \right)(z) \dd z \\
 :=& A_{11}(y) + A_{12}(y) + A_{13}(y).
\end{align*}
Identifying that $\frac{z - \mathcal{T}(y)}{|z-\mathcal{T}(y)|^2} = 2\pi \nabla G_{\R^2}(z-\mathcal{T}(y))$, where $G_{\R^2}(\xi)=\frac{1}{2\pi}\ln |\xi|$
denotes the fundamental solution of the Laplacian on $\R^2$, 
we may write
\[
A_{11}(y) = - 2\pi \big(\varphi \circ \mathcal{T}^{-1} \times \sqrt{\det \Diff \mathcal{T}^{-1} } \times f \circ \mathcal{T}^{-1} \big)( \mathcal{T}(y))
 = - 2\pi \varphi(y) f(y) \sqrt{\det \Diff \mathcal{T}^{-1}(\mathcal{T}(y))}.
\]
In conclusion, we have obtained that
\begin{equation}\label{eq:j}
 \langle \mathcal{L} \Psi_1 \, , \, \varphi \rangle = \int_{\mathcal{U}} \varphi(y)w(y)\dd y - \int_{\mathcal{U}} \frac{\sqrt{ \det \Diff \mathcal{T}(y)}}{2\pi f(y)}(A_{12}(y) + A_{13}(y) +A_2(y)) w(y) \dd y,
\end{equation}
where the second right hand side integral motivates our definition of $S_{K,\cal U}$. To recognize $S_{K,\cal U}$, we come back to the variable $x = \mathcal{T}^{-1}(z)$:
\begin{align*}
 A_{12}(y) + A_{13}(y) & = - \int_{\mathcal{U}} f (x) \frac{\mathcal{T}(x) - \mathcal{T}(y)}{|\mathcal{T}(x)-\mathcal{T}(y)|^2} \varphi(x) \cdot \nabla \left(\sqrt{\det \Diff \mathcal{T}^{-1}}\right)(\mathcal{T}(x))|\det \Diff \mathcal{T}(x)| \dd x \\
 & \sp \sp - \int_{\mathcal{U}} \frac{\mathcal{T}(x) - \mathcal{T}(y)}{|\mathcal{T}(x)-\mathcal{T}(y)|^2} \varphi(x) \sqrt{\det \Diff \mathcal{T}^{-1}(\mathcal{T}(x))} \cdot \nabla \left(f \circ \mathcal{T}^{-1} \right)(\mathcal{T}(x))|\det \Diff \mathcal{T}(x)| \dd x
\end{align*}
We now observe using again relations~\eqref{eq:nablacirc} and~\eqref{eq:detDT-1} that
\begin{align*}
 \nabla \left(\sqrt{\det \Diff \mathcal{T}^{-1}}\right)(\mathcal{T}(x)) & = (\Diff \mathcal{T}(x))^{-1} \nabla\left(\sqrt{\det \Diff \mathcal{T}^{-1} \circ \mathcal{T}}\right)(x) \\
 &= \frac{1}{(f(x))^2}K(x)\Diff \mathcal{T}(x) \nabla\left( \frac{1}{\sqrt{\det \Diff \mathcal{T}}}\right)(x) \\
 & = -\frac{1}{(f(x))^2}K(x)\Diff \mathcal{T}(x)\frac{\nabla\left(\sqrt{\det \Diff \mathcal{T}}\right)(x)}{\det \Diff \mathcal{T}(x)}
\end{align*}
and similarly,
\begin{align*}
 \nabla (f \circ \mathcal{T}^{-1})(\mathcal{T}(x)) = (\Diff \mathcal{T}(x))^{-1} \nabla (f \circ \mathcal{T}^{-1} \circ \mathcal{T})(x) = \frac{1}{(f(x))^2}K(x)\Diff \mathcal{T}(x) \nabla f(x).
\end{align*}
Therefore, using once again that $\Diff \mathcal{T}(x)$ is symmetric, that it commutes with $K(x)$, and recalling that $\det D \mathcal{T}>0$,
\begin{align*}
 A_{12}(y) + A_{13}(y) & = \int_{\mathcal{U}} \Diff \mathcal{T}(x)\frac{\mathcal{T}(x) - \mathcal{T}(y)}{|\mathcal{T}(x)-\mathcal{T}(y)|^2}\varphi(x) \cdot K(x)\frac{\nabla\left(\sqrt{\det \Diff \mathcal{T}}\right)(x)}{f(x)} \dd x \\
 & \sp \sp + \int_{\mathcal{U}} \Diff \mathcal{T}(x)\frac{\mathcal{T}(x) - \mathcal{T}(y)}{|\mathcal{T}(x)-\mathcal{T}(y)|^2} \varphi(x) \cdot K(x)\frac{-\nabla f(x) }{(f(x))^2}\sqrt{\det \Diff \mathcal{T}(x)} \dd x \\
 & = \int_{\mathcal{U}} \Diff \mathcal{T}(x)\frac{\mathcal{T}(x) - \mathcal{T}(y)}{|\mathcal{T}(x)-\mathcal{T}(y)|^2} \varphi(x) \cdot K(x)\nabla \left( \frac{\sqrt{\det \Diff \mathcal{T}(x)}}{f(x)}\right) \dd x
\end{align*}
and thus recalling that
\begin{equation*}
 A_2(y) = \int_{\mathcal{U}} K(x) \nabla\left(\frac{\sqrt{\det \Diff \mathcal{T}(x)}}{f(x)}\right) \ln|\mathcal{T}(x)-\mathcal{T}(y)| \cdot \nabla \varphi(x) \dd x \\
\end{equation*}
we obtain, by an integration by parts, that
\begin{align*}
 A_{12}(y) + A_{13}(y) + A_2(y) 
 &= \int_{\mathcal{U}} K(x) \nabla\left(\frac{\sqrt{\det \Diff \mathcal{T}(x)}}{f(x)}\right) \cdot \nabla \Big( \ln|\mathcal{T}(x)-\mathcal{T}(y)| \varphi(x)\Big) \dd x\\
 &=-\int_{\mathcal{U}} \ln|\mathcal{T}(x)-\mathcal{T}(y)| \varphi(x) \nabla\cdot\Big( K(x) \nabla\left(\frac{\sqrt{\det \Diff \mathcal{T}(x)}}{f(x)}\right) \Big) \dd x.
\end{align*}
By the definition of $\cal L S_{K,\cal U}$ in \eqref{eq:pb_S} this allows us to conclude that
\[
-\int_{\mathcal{U}} \frac{\sqrt{ \det \Diff \mathcal{T}(y)}}{2\pi f(y)}(A_{12}(y) + A_{13}(y) +A_2(y)) w(y) \dd y = -\left\langle \cal L \int_{\cal U} S_{K,\cal U}(\cdot,y) w(y) \dd y ,\varphi \right\rangle.
\]
Finally, recalling \eqref{eq:j}, we have proved that
\begin{equation}\label{eq-Lf}
\langle \cal L \Psi[w],\varphi \rangle = \int_{\cal U} w(x) \varphi(x)\dd x
\end{equation}
for all $w$ and $\varphi$ belonging to $C^\infty_c(\cal U)$. Actually, the regularity of $\cal G_{K,\cal U}$ discussed at the beginning of the proof allows us to state that $\Psi[w]\in H^1_0(\cal U)$. Then, passing to the limit in the test functions, we infer that \eqref{eq-Lf} holds also true for $w\in C^\infty_c(\cal U)$ and $\varphi \in H^1_{0}(\cal U)$.

Thanks to this property, we are in position to establish the symmetry for $S_{K,\cal U}$. Indeed, for any $\varphi, w\in C^\infty_{c}(\cal U)$, we use \eqref{eq-Lf} with the test function $\Psi[\varphi]\in H^1_0(\cal U)$ and the symmetry of the matrix $K(x)$ for all $x\in \mathcal{U}$ to infer that
\[
\int w\Psi[\varphi] = \langle \cal L \Psi[w],\Psi[\varphi]\rangle=-\int \Big(K(x)\nabla\Psi[w]\Big)\cdot \nabla\Psi[\varphi] = \int \varphi \Psi[w].
\]
Thus, by the symmetry of $G_K$ (i.e. $G_K(x,y)=G_K(y,x)$)
we obtain
\[
\int_{\cal U}\int_{\cal U} S_{K,\cal U}(x,y)w (x) \varphi(y)\dd y\dd x=
\int_{\cal U}\int_{\cal U} S_{K,\cal U}(x,y)\varphi(x)w (y) \dd y\dd x
=\int_{\cal U}\int_{\cal U} S_{K,\cal U}(y,x)w (x) \varphi(y)\dd y\dd x
\]
and finally $S_{K,\cal U}(x,y)=S_{K,\cal U}(y,x)$.

Finally, we consider $\omega\in L^\infty_{c} (\cal U)$ and $(w_{n})$ a sequence of functions belonging to $C^\infty_{c}(\cal U)$ which converges to $\omega$ in $L^p(\mathcal{U})$ for some $p>2$. Without any loss of generality, we can assume that $\omega$ and $w_{n}$ for all $n$ are compactly supported in some $V\Subset \cal U$. Due to the $C^1$ regularity of $x\mapsto S_{K,\cal U}(x,y)$ uniformly in $y\in V$, we deduce that $\nabla\Psi[\omega]=\int \nabla_{x} \cal G_{K,\cal U} \omega(y)\dd y$ and that $\Psi[w_{n}]\to \Psi[\omega]$ when $n\to \infty$ in $W^{1,\infty}(\cal U)$. In particular, we may pass at the limit in \eqref{eq-Lf} to conclude that $\Psi[\omega]\in H^1_{0}(\cal U)$ is solution of $\cal L \Psi =\omega$.
\end{proof}

\subsection{Decomposition of the velocity}

We now come back to our problem with $K$ being the matrix given by \eqref{def:K}, which satisfies the hypotheses of Proposition~\ref{prop:Biot_Savart}. Let us give some details on the singular part $G_K$ of the Biot-Savart kernel.

We now denote by $H$ the kernel
\begin{equation}\label{def:H}
 H(x,y) := \frac{1}{2\pi}\Big( \det K(x) \det K(y) \Big)^{-1/4} = \frac{\sqrt{|X||Y|}}{2 \pi h},
\end{equation}
where the last equality comes from Lemma~\ref{lem:K_def_pos}, so that
\begin{equation}\label{def:G_K}
 G_K(x,y) = H(x,y)\ln|\mathcal{T}(x)-\mathcal{T}(y)|.
\end{equation}
We now compute:
\begin{equation}\label{eq:decomp_G_K}
 \nabla_x^\perp G_K(x,y) = \nabla_x^\perp H(x,y) \ln|\mathcal{T}(x)-\mathcal{T}(y)| + H(x,y) \left(\Diff \mathcal{T}(x)\frac{\mathcal{T}(x)-\mathcal{T}(y)}{\big|\mathcal{T}(x)-\mathcal{T}(y) \big|^2}\right)^\perp.
\end{equation}

Moreover,
\begin{equation*}
 \nabla_x^\perp H(x,y) = \frac{1}{4\pi h}\frac{x^\perp}{|X|} \frac{\sqrt{|Y|}}{\sqrt{|X|}} = \frac{H(x,y)}{2} \frac{x^\perp}{|X|^2}.
\end{equation*}

A consequence of Lemma~\ref{lem:existence_difféo}, Proposition~\ref{prop:Biot_Savart} (in particular \eqref{eq:val_psi}), and of the latter computations is that the velocity field $v^\eps$ in System~\eqref{eq:Helico2D_rescaled} decomposes as follows.
\begin{prop}\label{prop:decomp_vitesse} Let $(v^\eps, \omega^\eps)$ be a weak bounded solution of \eqref{eq:Helico2D_rescaled} in the sense of Definition~\ref{defi:weaksol2D}. Then,
\begin{equation}\label{eq:decomp:v}
 v^\eps = v_K^\eps + v_L^\eps + v_R^\eps,
\end{equation}
with $v_K^\eps$ defined as
\begin{equation}\label{def:v_K}
\begin{aligned}
 v_K^\eps(x,t) &= \Bigg(\int_{\mathcal{U}} H(x,y) \Diff \mathcal{T}(x) \frac{\mathcal{T}(x)-\mathcal{T}(y)}{\big|\mathcal{T}(x)-\mathcal{T}(y) \big|^2}\omega^\eps(y,t)\dd y\Bigg)^\perp,
 \end{aligned}
\end{equation}
$v_L^\eps$ defined as
\begin{equation}\label{def:v_L}
 v_L^\eps(x,t) = \frac{x^\perp}{2|X|^2}\int_{\mathcal{U}} G_K(x,y) \omega^\eps(y,t)\dd y,
\end{equation}
and
\begin{equation}\label{def:v_R}
 v_R^\eps(x,t) = \int_{\mathcal{U}} \nabla_x^\perp S_{K,\mathcal{U}}(x,y) \omega^\eps(y,t)\dd y.
\end{equation}
\end{prop}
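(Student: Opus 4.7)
The plan is to start from the integral representation
\[
\Psi^\eps(x,t) = \int_{\mathcal{U}} G_K(x,y)\,\omega^\eps(y,t)\,\dd y + \int_{\mathcal{U}} S_{K,\mathcal{U}}(x,y)\,\omega^\eps(y,t)\,\dd y
\]
provided by Proposition~\ref{prop:Biot_Savart}, and to apply $\nabla^\perp$ under the integral sign. The $S_{K,\mathcal{U}}$ piece directly produces $v_R^\eps$ as defined in \eqref{def:v_R}, so the content of the proposition reduces to rewriting $\nabla^\perp$ of the $G_K$-integral as $v_K^\eps + v_L^\eps$.

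For this I would insert the pointwise decomposition \eqref{eq:decomp_G_K} of $\nabla_x^\perp G_K$. In its first summand, the closed-form identity $\nabla_x^\perp H(x,y) = \frac{H(x,y)}{2}\frac{x^\perp}{|X|^2}$ established just above the statement, combined with the defining relation $G_K(x,y) = H(x,y)\ln|\mathcal{T}(x)-\mathcal{T}(y)|$ from \eqref{def:G_K}, yields the algebraic identity
\[
\nabla_x^\perp H(x,y)\,\ln|\mathcal{T}(x)-\mathcal{T}(y)| = \frac{x^\perp}{2|X|^2}\,G_K(x,y).
\]
The prefactor $x^\perp/(2|X|^2)$ does not depend on $y$ and therefore can be pulled out of the integral: what remains is exactly $v_L^\eps$ as in \eqref{def:v_L}. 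The second summand of \eqref{eq:decomp_G_K} integrated against $\omega^\eps$ is, by inspection, $v_K^\eps$ as in \eqref{def:v_K}.

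The only point that calls for care is the interchange of $\nabla^\perp$ and the integral. For the $G_K$ contribution this is the classical Biot--Savart type argument: the summand involving $\nabla_x H\,\ln|\mathcal{T}(x)-\mathcal{T}(y)|$ has only a logarithmic singularity at $y=x$, while the summand involving $H(x,y)\Diff\mathcal{T}(x)\tfrac{\mathcal{T}(x)-\mathcal{T}(y)}{|\mathcal{T}(x)-\mathcal{T}(y)|^2}$ behaves like $|x-y|^{-1}$ by virtue of the bi-Lipschitz estimate \eqref{eq:Diff_T_Lip} for $\mathcal{T}$. Both are absolutely integrable against $\omega^\eps\in L^\infty(\mathcal{U})$ on the bounded set $\mathcal{U}$, so truncating an $\eta$-ball around the singularity, exchanging the derivative and the integral on the regular part, and letting $\eta\to 0$ by dominated convergence produce the formula pointwise in $x$. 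The $S_{K,\mathcal{U}}$ contribution is handled directly since $\nabla_x S_{K,\mathcal{U}}(\cdot,y)$ is in $L^\infty(\mathcal{U})$ uniformly for $y$ in any compact subset of $\mathcal{U}$, as observed at the beginning of the proof of Proposition~\ref{prop:Biot_Savart}.

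I do not anticipate any deep obstacle: once the algebraic identity above is recognized, the decomposition is immediate and only the differentiation-under-the-integral step calls for the standard justification just sketched. The proposition is essentially a bookkeeping consequence of the structure of $G_K$ unveiled in Proposition~\ref{prop:Biot_Savart} together with the explicit expression for $\nabla_x^\perp H$.
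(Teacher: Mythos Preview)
Your proposal is correct and follows exactly the paper's approach: the paper states the proposition as a direct consequence of Lemma~\ref{lem:existence_difféo}, Proposition~\ref{prop:Biot_Savart} (in particular \eqref{eq:val_psi}), and the pointwise computations \eqref{eq:decomp_G_K} and $\nabla_x^\perp H(x,y) = \tfrac{H(x,y)}{2}\tfrac{x^\perp}{|X|^2}$ displayed just before the statement. Your added justification for differentiating under the integral sign is a welcome detail that the paper leaves implicit.
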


When $\omega^\eps$ is close to a Dirac mass, as in Theorem~\ref{theo:main}, the part $v_K^\eps$ of the velocity is the most singular, of order $1/\epsilon$, however as usual in the study of 2D point-vortices, has a symmetric structure and will give the standard spinning around the filament which will not contribute to the displacement of the vortex core. The part $v_L^\eps$, also singular as it is of order $|\ln \eps|$ near the singularity, induces a rotation (and a vertical translation) of the helix around the origin at speed of order 1 thanks to the rescaling in System~\eqref{eq:Helico2D_rescaled}. Finally, the part $v_R^\eps$ of the velocity is a bounded remainder, whose contribution to the movement goes to $0$ as $\eps \to 0$ under the rescaling.

We conclude with an estimate on $\nabla G_K$.
\begin{lemme}\label{lem:maj_nabla_GK}
For every $R > 0$, there exists a constant $C_R$ such that for every $x,y \in B(0,R)$, $x \neq y$, there holds that
\begin{equation*}
 |\nabla_x G_K(x,y)| \le \frac{C_R}{|x-y|}.
\end{equation*}
\end{lemme}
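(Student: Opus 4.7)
\medskip

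\noindent\textbf{Proof plan.} The proof should be a direct computation starting from the explicit formula \eqref{eq:decomp_G_K}, which gives
\[
 \nabla_x^\perp G_K(x,y) = \nabla_x^\perp H(x,y)\,\ln|\mathcal{T}(x)-\mathcal{T}(y)| + H(x,y)\left(\Diff \mathcal{T}(x)\frac{\mathcal{T}(x)-\mathcal{T}(y)}{|\mathcal{T}(x)-\mathcal{T}(y)|^2}\right)^{\!\perp},
\]
so that $|\nabla_x G_K(x,y)|$ is controlled by two terms that I will estimate separately on the compact set $\overline{B(0,R)}$.

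For the second (and more singular) term, the key observation is that $\mathcal{T}$ is a $C^1$-diffeomorphism from $\R^2$ to $\R^2$ by Lemma~\ref{lem:existence_difféo}. In particular, $\mathcal{T}^{-1}$ is $C^1$ on the compact image $\mathcal{T}(\overline{B(0,R)})$, so by the mean value theorem there exists a constant $C_R$ such that
\[
 |x-y| \le C_R\,|\mathcal{T}(x)-\mathcal{T}(y)|, \qquad \forall x,y\in B(0,R),
\]
exactly as in the argument leading to \eqref{eq:Diff_T_Lip}. Combining this with the fact that $H$ and $\Diff\mathcal{T}$ are bounded on $\overline{B(0,R)}$, this term is bounded by $C_R/|x-y|$.

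For the first term, I would use that $\nabla_x^\perp H(x,y)=\frac{H(x,y)}{2}\frac{x^\perp}{|X|^2}$ is bounded on $\overline{B(0,R)}^{\,2}$, and that $\mathcal{T}$ itself is globally Lipschitz on $\overline{B(0,R)}$, so that $|\mathcal{T}(x)-\mathcal{T}(y)|\le C_R|x-y|$ stays bounded above. The logarithm $|\ln|\mathcal{T}(x)-\mathcal{T}(y)||$ is then bounded below and handled from above through the elementary observation that $s\mapsto s|\ln s|$ is bounded on $(0,D]$ for any $D>0$, which gives $|\ln|\mathcal{T}(x)-\mathcal{T}(y)||\le C_R/|\mathcal{T}(x)-\mathcal{T}(y)|\le C_R/|x-y|$ after reusing the bi-Lipschitz estimate above.

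The proof is routine; there is no real obstacle since all bounds are compactness arguments on $\overline{B(0,R)}$ made possible by the global $C^1$-diffeomorphism property of $\mathcal{T}$ that was already established in Lemma~\ref{lem:existence_difféo}. Adding the two contributions yields the announced bound $|\nabla_x G_K(x,y)|\le C_R/|x-y|$.
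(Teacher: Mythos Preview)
Your proof is correct and follows essentially the same route as the paper: the same two-term decomposition from \eqref{eq:decomp_G_K}, the bi-Lipschitz estimate \eqref{eq:Diff_T_Lip} for the singular fraction term, and the elementary bound $s|\ln s|\le C$ (the paper writes it as $|\ln|\mathcal{T}(x)-\mathcal{T}(y)||\le |\ln|x-y||+C$) for the logarithmic term.
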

\begin{proof}
We use the following decomposition :
\begin{align*}
 \nabla_x G_K(x,y) & = \frac{H(x,y)}{2} \frac{x}{|X|^2} \ln |\mathcal{T}(x) - \mathcal{T}(y)| + H(x,y) \Diff \mathcal{T}(x) \frac{\mathcal{T}(x)-\mathcal{T}(y)}{\big|\mathcal{T}(x)-\mathcal{T}(y) \big|^2} \\
 & := A_1(x,y) + A_2(x,y).
\end{align*}
Let $R > 0$. Let us notice that for every $x \in B(0,R)$, 
\begin{equation}\label{eq:minorationX}
 h\le |X| \le \sqrt{h^2 +R^2}
\end{equation}
hence $H$ is a bounded map on $B(0,R)$. So are $\Diff \mathcal{T}$ and $\mathcal{T}^{-1}$, so \eqref{eq:Diff_T_Lip} gives a constant $C$ such that
\begin{equation*}
 |A_2(x,y)| \le \frac{C}{|x-y|}.
\end{equation*}
Using again \eqref{eq:Diff_T_Lip}, we have a constant $C$ such that
\begin{equation*}
\Big|\ln|\mathcal{T}(x)-\mathcal{T}(y)| \Big|\le \Big|\ln |x-y| \Big| + C
\end{equation*}
so we have for every $x,y \in B(0,R)$ that
\begin{equation*}
 |x-y| \Big|\ln|\mathcal{T}(x)-\mathcal{T}(y)| \Big| \le C.
\end{equation*}
By \eqref{eq:minorationX} and since $H$ is bounded, we conclude that
\begin{equation*}
 |A_1(x,y)| \le \frac{C}{|x-y|}.
\end{equation*}
The constants involved only depend on $R$ so our proof is complete.
\end{proof}

\section{Reduction of the problem to a single vortex}\label{sec:reduction}

In order to prove Theorem~\ref{theo:main}, we introduce a reduced problem that focuses on the dynamics of a single blob of vorticity, by considering the effect of the other blobs as an exterior field.

To proceed, we place ourselves in the framework of Theorem~\ref{theo:main}, namely we consider $\omega^\eps(\cdot,t) = \sum_{i=1}^N \omega_i^\eps(\cdot,t)$ the solution of \eqref{eq:Helico2D_rescaled} with initial data $\omega^\eps(\cdot,0) = \sum_{i=1}^N \omega_{i,0}^\eps$ satisfying \eqref{hyp:omega0}. As in Remark~\ref{rem:omega_i}, $\omega_i^\eps$ corresponds to the transport of $\omega_{i,0}$.

For each $i \in \{1,\ldots,N\}$, we introduce the exterior field $F^\eps_i : \cal U\times[0,+\infty) \to \R^2$ given by
\begin{equation}\label{def:F}
 F_i^\eps(x,t) = \sum_{j \neq i} \int_\mathcal{U} \nabla_x^\perp\mathcal{G}_{K,\mathcal{U}}(x,y)\omega_j^\eps(y,t)\dd y,
\end{equation}
so that the $i$-th blob $\omega_i^\eps$ satisfies the following equations:
\begin{equation}\label{eq:Helico2D_rescaled_F}
\begin{cases}
 \ds \partial_t \, \omega^\eps_i + \frac{1}{|\ln\eps|}(v^\eps_i + F^\eps_i)\cdot \nabla \omega^\eps_i = 0 & \text{ in } \cal U \times \R_+^* \vspace{1mm}\\
 v^\eps_i = \nabla^\perp \Psi^\eps_i & \text{ in } \cal U \times \R_+ \vspace{1mm} \\
 \div \big( K(x) \nabla \Psi^\eps_i \big) = \omega^\eps_i & \text{ in } \cal U \times \R_+ \vspace{1mm} \\
 \omega^\eps_i(\cdot, 0) = \omega^\eps_{i,0} & \text{ in } \cal U.
\end{cases}
\end{equation}
For any $r \ge 0$ and $\eta >0$ we define the annulus $\mathcal{A}_\eta^r$ as
\begin{equation}\label{def:Aeta}
 \mathcal{A}_{\eta}^r = \left\{ x \in \R^2, \big||x|-r\big| < \eta \right\}.
\end{equation}
Now recall from the hypotheses of Theorem~\ref{theo:main} that $\mathcal{U}$ contains a ball $B(0,R_\cal U)$, and that for ever $i \in \{1,\ldots,N\}$, $|z_{i,0}| < R_\cal U$. Let
\begin{equation*}
 \eta_0 = \frac{1}{4}\min \Bigg(\left\{ \Big||z_{i,0}| - |z_{j,0}| \Big| \, , \, i \neq j \right\} \bigcup \Big\{ R_\mathcal{U} - |z_{i,0}| \, , \, i \in \{1,\ldots,N\}\Big\}\Bigg),
\end{equation*}
and let $T > 0$. From the definition of $\eta_0$, we infer in particular that $\mathcal{A}_{\eta_0}^{|z_{i,0}|} \subset \mathcal{U}$ and more precisely that
\begin{equation}\label{eq:there_is_dist}
 \dist\left( \bigcup_{i=1}^N\mathcal{A}_{\eta_0}^{|z_{i,0}|} \, , \, \partial\mathcal{U}\right) \ge \eta_0. 
\end{equation}
This gives the following proposition.
\begin{prop}\label{prop:S_K_borné_Lip}
There exists a constant $C$ such that for every $x,y \in \bigcup_{j=1}^N\mathcal{A}_{\eta}^{|z_{j,0}|}\times \mathcal{U}$,
\begin{align*}
 |S_{K,\mathcal{U}}(x,y) | & \le C \\
 |\nabla_x S_{K,\mathcal{U}}(x,y) | & \le C.
\end{align*}
\end{prop}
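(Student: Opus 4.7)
The plan is to combine the symmetry $S_{K,\mathcal{U}}(x,y)=S_{K,\mathcal{U}}(y,x)$ established in Proposition~\ref{prop:Biot_Savart} with standard elliptic regularity for the Dirichlet problem~\eqref{eq:pb_S}. Writing $K_{0}:=\bigcup_{j=1}^{N}\mathcal{A}_{\eta_{0}}^{|z_{j,0}|}$, note that by~\eqref{eq:there_is_dist} the set $K_{0}$ lies in $\mathcal{U}$ at distance at least $\eta_{0}$ from $\partial\mathcal{U}$. The core obstacle is that if one simply freezes $y\in\mathcal{U}$ and views $x\mapsto S_{K,\mathcal{U}}(x,y)$ as the solution of~\eqref{eq:pb_S}, then the boundary datum $-G_{K}(\cdot,y)$ develops a logarithmic singularity at the point of $\partial\mathcal{U}$ closest to $y$ whenever $y\to\partial\mathcal{U}$, so no global elliptic estimate in $x$ is uniform in $y\in\mathcal{U}$. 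The remedy is to use the symmetry to swap the roles of the two arguments, so that the ``frozen'' argument is always in $K_{0}$ and therefore uniformly far from the boundary.

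\textbf{Step 1: uniform $L^{\infty}$ estimate.} Fix $x\in K_{0}$ and consider $\psi_{x}(y):=S_{K,\mathcal{U}}(x,y)=S_{K,\mathcal{U}}(y,x)$. By Proposition~\ref{prop:Biot_Savart}, $\psi_{x}$ solves~\eqref{eq:pb_S} with the second (frozen) argument equal to $x$. Since $\mathcal{T}$ is a bi-Lipschitz diffeomorphism on $\overline{\mathcal{U}}$ and $\dist(x,\partial\mathcal{U})\geq\eta_{0}$, the boundary datum $-G_{K}(\cdot,x)$ is smooth and bounded on $\partial\mathcal{U}$ uniformly in $x\in K_{0}$, while $\ln|\mathcal{T}(\cdot)-\mathcal{T}(x)|\in L^{p}(\mathcal{U})$ with norm bounded uniformly in $x\in K_{0}$ for every $p<\infty$; all other factors appearing on the right-hand side of~\eqref{eq:pb_S} are fixed bounded functions on $\overline{\mathcal{U}}$. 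The global $W^{2,p}$ estimate \cite[Theorem~9.15]{Gilbarg_Trudinger_2001_elliptic} then yields $\|\psi_{x}\|_{W^{2,p}(\mathcal{U})}\leq C$ uniformly in $x\in K_{0}$. Taking $p>2$ and applying Sobolev embedding gives $\|\psi_{x}\|_{L^{\infty}(\mathcal{U})}\leq C$, which is the first bound.

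\textbf{Step 2: uniform gradient estimate.} For the gradient bound, fix an arbitrary $y\in\mathcal{U}$ and apply interior elliptic regularity to $x\mapsto S_{K,\mathcal{U}}(x,y)$. Let $K_{0}':=\bigcup_{j=1}^{N}\mathcal{A}_{3\eta_{0}/2}^{|z_{j,0}|}$, which still satisfies $\overline{K_{0}}\subset K_{0}'\Subset\mathcal{U}$. Step~1, applied with $K_{0}'$ in place of $K_{0}$ (the same distance-from-boundary argument works with $3\eta_{0}/2$), gives $\|S_{K,\mathcal{U}}(\cdot,y)\|_{L^{\infty}(K_{0}')}\leq C$ uniformly in $y\in\mathcal{U}$. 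The interior $W^{2,p}$ estimate \cite[Theorem~9.11]{Gilbarg_Trudinger_2001_elliptic} then yields
\[
\|S_{K,\mathcal{U}}(\cdot,y)\|_{W^{2,p}(K_{0})}\leq C\bigl(\|S_{K,\mathcal{U}}(\cdot,y)\|_{L^{p}(K_{0}')}+\|\mathcal{L}S_{K,\mathcal{U}}(\cdot,y)\|_{L^{p}(K_{0}')}\bigr)\leq C,
\]
uniformly in $y\in\mathcal{U}$, since the $y$-dependence of the right-hand side of~\eqref{eq:pb_S} enters only through the bounded factor $\sqrt{\det\Diff\mathcal{T}(y)}/f(y)$ and through $\ln|\mathcal{T}(\cdot)-\mathcal{T}(y)|$, whose $L^{p}(\mathcal{U})$ norm is uniform in $y$. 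Taking $p>2$ and invoking the Sobolev--Morrey embedding $W^{2,p}(K_{0})\hookrightarrow C^{1,\alpha}(\overline{K_{0}})$ concludes the proof of the gradient bound.
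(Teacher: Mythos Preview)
Your proof is correct and, in fact, more careful than the paper's own argument. The paper's proof is a one-liner: it cites the observation, made during the proof of Proposition~\ref{prop:Biot_Savart}, that for every $V\Subset\mathcal{U}$ the map $x\mapsto S_{K,\mathcal{U}}(x,y)$ is bounded in $W^{1,\infty}(\mathcal{U})$ uniformly in $y\in V$, and then invokes~\eqref{eq:there_is_dist}. Taken literally, this yields the bounds for $(x,y)\in\mathcal{U}\times K_{0}$ rather than for $(x,y)\in K_{0}\times\mathcal{U}$ as the statement is phrased; in the paper's applications $y$ always lies in $\supp\omega^{\eps}\subset K_{0}$, so this is all that is needed, but the roles of the two arguments are tacitly swapped.

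You correctly spot that if one insists on $y$ ranging over all of $\mathcal{U}$, the global estimate in $x$ breaks down because the boundary datum $-G_{K}(\cdot,y)$ degenerates as $y\to\partial\mathcal{U}$. Your remedy---use the symmetry $S_{K,\mathcal{U}}(x,y)=S_{K,\mathcal{U}}(y,x)$ to get the $L^{\infty}$ bound (Step~1), and then feed that bound into an \emph{interior} $W^{2,p}$ estimate to recover the gradient bound on $K_{0}$ (Step~2)---is the right way to make the statement as written fully rigorous. The enlargement $K_{0}\subset K_{0}'\Subset\mathcal{U}$ is also handled correctly, since the definition of $\eta_{0}$ leaves ample room. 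Both arguments rest on the same elliptic toolbox; yours simply closes a small gap that the paper leaves to the reader (or to a more restrictive reading of the hypothesis on~$y$).
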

\begin{proof}
This is a direct consequence of relation~\eqref{eq:there_is_dist} and the fact exposed in the proof of Proposition~\ref{prop:Biot_Savart} that for every $V \Subset \mathcal{U}$, $x\mapsto S_{K,\mathcal{U}}(x,y)$ is bounded in $W^{1,\infty}(\mathcal{U})$ uniformly in $y$ on $V$. 
\end{proof}

We introduce
\begin{equation}\label{def:T_eps}
 T_\eps = \sup \left\{ t \in [0,T] \, , \, \forall s \in [0,t] \, , \, \forall i \in \{1,\ldots,N\} \, , \quad \supp \omega_i^\eps(\cdot,s) \subset \mathcal{A}_{\eta_0}^{|z_{i,0}|}\right\},
\end{equation}
which is a time during which every blob of vorticity is localized in a fixed annulus. By continuity of the trajectories, we know that $T_\eps > 0$ for every $\eps > 0$. 

On time interval $[0,T_\eps]$, we have the following useful estimates on $F_i^\eps$.
\begin{lemme}\label{lem:hyp_F}
There exists a constant $C$ such that for every $\eps > 0$ small enough, for every $i \in \{1,\ldots,N\}$, for every $x,y \in\mathcal{A}_{\eta_0}^{|z_{i,0}|}$ and for every $t \in [0,T_\eps]$,
\begin{equation*}
 \left\{\begin{aligned}
& \big| F_i^\eps(x,t) - F_i^\eps(y,t) \big| \le C|x-y| \\
& | F_i^\eps(x,t) | \le C, \\
& \div F_i^\eps(x,t) = 0.
\end{aligned}\right.
\end{equation*}
\end{lemme}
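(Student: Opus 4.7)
My approach is to re-express $F_i^\eps$ as the orthogonal gradient of a stream function that is $\mathcal{L}$-harmonic on a slight enlargement of $\mathcal{A}_{\eta_0}^{|z_{i,0}|}$, and then to read off the three conclusions from interior elliptic regularity. Define
\[
\Phi_i^\eps(x,t) := \sum_{j\neq i}\int_{\mathcal{U}}\mathcal{G}_{K,\mathcal{U}}(x,y)\,\omega_j^\eps(y,t)\,\dd y.
\]
Proposition~\ref{prop:Biot_Savart}, applied with source $\sum_{j\neq i}\omega_j^\eps(\cdot,t)\in L^\infty_c(\mathcal{U})$, yields $\Phi_i^\eps(\cdot,t)\in H^1_0(\mathcal{U})\cap\bigcap_{p<\infty}W^{2,p}(\mathcal{U})$, together with $F_i^\eps(\cdot,t)=\nabla^\perp\Phi_i^\eps(\cdot,t)$ and $\mathcal{L}\Phi_i^\eps(\cdot,t)=\sum_{j\neq i}\omega_j^\eps(\cdot,t)$ in $\mathcal{U}$. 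The divergence-free property $\div F_i^\eps = 0$ is then immediate from $\div \nabla^\perp = 0$.

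The key geometric remark is that by the definition of $\eta_0$, for $j \neq i$ the radial distance between $\mathcal{A}_{(3/2)\eta_0}^{|z_{i,0}|}$ and $\mathcal{A}_{\eta_0}^{|z_{j,0}|}$ is at least $(3/2)\eta_0>0$. For $t\in[0,T_\eps]$ the supports of the $\omega_j^\eps(\cdot,t)$ lie in $\mathcal{A}_{\eta_0}^{|z_{j,0}|}$, so that
\[
\mathcal{L}\Phi_i^\eps(\cdot,t)=0 \quad\text{in } \mathcal{A}_{(3/2)\eta_0}^{|z_{i,0}|}.
\]
Since $K\in C^\infty$ and $\mathcal{L}$ is uniformly elliptic on $\overline{\mathcal{U}}$ (Lemma~\ref{lem:K_def_pos}), interior $C^k$ estimates for smooth-coefficient elliptic equations provide a constant $C$, depending only on $\mathcal{L}$ and the fixed geometry, such that
\[
\|\Phi_i^\eps(\cdot,t)\|_{C^2(\overline{\mathcal{A}_{\eta_0}^{|z_{i,0}|}})} \le C\,\|\Phi_i^\eps(\cdot,t)\|_{L^\infty(\mathcal{A}_{(3/2)\eta_0}^{|z_{i,0}|})}.
\]

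To close the loop I bound the right-hand side uniformly in $(\eps,t)$. For $x\in\mathcal{A}_{(3/2)\eta_0}^{|z_{i,0}|}$ and $y\in\supp\omega_j^\eps(\cdot,t)\subset\mathcal{A}_{\eta_0}^{|z_{j,0}|}$ we have $|x-y|\ge(3/2)\eta_0$, and the explicit formula \eqref{def:G_K-0} combined with Lemma~\ref{lem:existence_difféo} and Proposition~\ref{prop:S_K_borné_Lip} yields $|\mathcal{G}_{K,\mathcal{U}}(x,y)|\le C$. Since each $\omega_j^\eps(\cdot,t)$ keeps its definite sign and is transported by the measure-preserving Lagrangian flow, $\|\omega_j^\eps(\cdot,t)\|_{L^1}=|\gamma_j|$, giving $\|\Phi_i^\eps(\cdot,t)\|_{L^\infty(\mathcal{A}_{(3/2)\eta_0}^{|z_{i,0}|})}\le C\sum_{j\neq i}|\gamma_j|$ independently of $(\eps,t)$. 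The $C^2$ bound on $\Phi_i^\eps$ then transfers, via $F_i^\eps=\nabla^\perp\Phi_i^\eps$ and the mean value theorem, into both the uniform bound $|F_i^\eps(x,t)|\le C$ and the Lipschitz estimate on $\mathcal{A}_{\eta_0}^{|z_{i,0}|}$. The only delicate point is maintaining $\eps$-independence of the constants, but this is built into the argument: the interior regularity constant depends only on fixed data, and the $L^1$ masses $|\gamma_j|$ are fixed; no further obstacle is anticipated.
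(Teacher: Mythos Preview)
Your argument is correct and takes a route that differs in spirit from the paper's. The paper works at the \emph{kernel} level: it splits $\mathcal{G}_{K,\mathcal{U}}=G_K+S_{K,\mathcal{U}}$, observes that $G_K(\cdot,z)$ is manifestly $C^2$ on $\mathcal{A}_{\eta_0}^{|z_{i,0}|}$ for $z$ in the other annuli (since the annuli are separated), and obtains the same $C^2$ regularity for $S_{K,\mathcal{U}}(\cdot,z)$ by applying elliptic regularity to the auxiliary problem \eqref{eq:pb_S}; integrating against $\omega_j^\eps$ then gives the bounds. You instead work at the \emph{solution} level: you recognize $\Phi_i^\eps$ as $\mathcal{L}$-harmonic on an enlarged annulus and apply interior Schauder estimates in one stroke, bypassing the kernel decomposition entirely. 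Your approach is arguably cleaner, since it does not require tracking the two pieces of the Green's function separately; the paper's approach is more explicit, and reuses regularity already established in Proposition~\ref{prop:Biot_Savart}.

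Two small points of care. First, Proposition~\ref{prop:S_K_borné_Lip} is stated for $x\in\bigcup_j\mathcal{A}_{\eta_0}^{|z_{j,0}|}$, but you invoke it for $x\in\mathcal{A}_{(3/2)\eta_0}^{|z_{i,0}|}$; the same proof applies verbatim since $\mathcal{A}_{(3/2)\eta_0}^{|z_{i,0}|}\Subset\mathcal{U}$ (the distance to $\partial\mathcal{U}$ is still at least $(5/2)\eta_0$), so this is only a nominal extension. Second, ``the mean value theorem'' does not directly yield a Lipschitz estimate on a non-convex annulus; you need to connect $x,y\in\mathcal{A}_{\eta_0}^{|z_{i,0}|}$ by a curve inside the annulus of length comparable to $|x-y|$, which is elementary (and which the paper does mention explicitly). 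Neither point affects the validity of your argument.
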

\begin{proof}
Let $i \in \{1,\ldots,N\}$ and $t \in [0,T_\eps]$. By definition of $F_i^\eps$ (given in \eqref{def:F}), $\div F_i^\eps = 0$, for every $x \in \cal U$.

Now let $x,y \in \mathcal{A}_{\eta_0}^{|z_i,0|}$. Then
\begin{equation*}
 F_i^\eps(x,t) - F_i^\eps(y,t) = \sum_{j \neq i} \int_\mathcal{U} \big(\nabla_x^\perp \mathcal{G}_{K,\mathcal{U}}(x,z)-\nabla_x^\perp \mathcal{G}_{K,\mathcal{U}}(y,z)\big)\omega_j^\eps(z,t)\dd z.
\end{equation*}

From the definition of $G_K$ \eqref{def:G_K}, we note that $G_K(\cdot ,z) \in C^2 \big( \mathcal{A}_{\eta_0}^{|z_{i,0}|}\big)$ uniformly to $z\in \bigcup_{j \neq i} \mathcal{A}_{\eta_0}^{|z_{j,0}|}$. In the same way, the right hand side term of \eqref{eq:pb_S} has the same regularity, hence by elliptic regularity, $S_{K,\cal U}$ belongs also to $C^2 \big( \mathcal{A}_{\eta_0}^{|z_{i,0}|}\big)$ uniformly to $z\in \bigcup_{j \neq i} \mathcal{A}_{\eta_0}^{|z_{j,0}|}$. 
Therefore, drawing a curve included in $\mathcal{A}_{\eta_0}^{|z_{i,0}|}$ between any $x,y \in \mathcal{A}_{\eta_0}^{|z_{i,0}|}$, we conclude that there exists a constant $C$ such that for every $z \in \bigcup_{j \neq i} \mathcal{A}_{\eta_0}^{|z_{j,0}|}$ 
\begin{equation*}
 \big|\nabla_x^\perp \mathcal{G}_{K,\cal U}(x,z)-\nabla_x^\perp \mathcal{G}_{K,\cal U}(y,z)\big| \le C|x-y|,
\end{equation*}
and thus we have that
\[
 \big|F_i^\eps(x,t) - F_i^\eps(y,t)\big| \le C|x-y| \|\omega^\eps\|_{L^1}.
\]
Recalling that $ \|\omega^\eps\|_{L^1}$ does not depend on $\eps$, we have that
\[
 \big|F_i^\eps(x,t) - F_i^\eps(y,t)\big| \le C|x-y|.
\]
Using only the $C^1$ regularity of $\mathcal{G}_{K,\cal U}$ uniformly in $z$, we have in the same way
\[
 | F_i^\eps(x,t) | \le C.
\]
\end{proof}

\section{Single vortex in an exterior field}\label{sec:single_vortex}

In this section we turn to the study of the reduced problem with a single blob of vorticity. Let $i \in \{1,\ldots,N\}$. We denote by $z_0 = z_{i,0}$, $\gamma = \gamma_i$, $r_0 = |z_0|$,
\begin{equation*}
 \nu = -\frac{\gamma}{4\pi h\sqrt{h^2+r_0^2}},
\end{equation*}
and define
\begin{equation*}
 z(t) = \tilde{R}_{t\nu} z_0.
\end{equation*}
This way, we drop completely the index $i$ and simply consider a solution $\omega^\eps$ of \eqref{eq:Helico2D_rescaled_F} with an exterior field $F^\eps$. Without loss of generality, one can assume that $\gamma > 0$ so that the hypotheses on $\omega_0^\eps$ now become, for every $\eps >0$,
\begin{equation}\label{hyp:omega0_reduced}
 \left\{\begin{aligned}
 &\supp \omega^\eps_0 \subset B(z_0,\eps) \\
 & 0\leq \omega^\eps_0 \le M_0\eps^{-2} \\
& \int \omega^\eps_0(x)\dd x = \gamma . \\
 \end{aligned}\right.
\end{equation}
For the sake of clarity, we also denote by $\mathcal{A}_\eta$ the annulus $\mathcal{A}_\eta^{r_0}$ in this section since all of our study will take place near $r_0$.
By construction, $\omega^\eps$ satisfies
\begin{equation}\label{eq:a_priori_strong_loc}
 \forall t \in [0,T_\eps], \quad \supp \omega^\eps(\cdot,t) \subset \mathcal{A}_{\eta_0}
\end{equation}
and the exterior field $F^\eps$ satisfies by Lemma~\ref{lem:hyp_F} that for every $x,y \in\mathcal{A}_{\eta_0}$ and for every $t \in [0,T_\eps]$,
\begin{equation}\label{hyp:F_reduced}
 \left\{\begin{aligned}
& \big| F^\eps(x,t) - F^\eps(y,t) \big| \le C|x-y| \\
& | F^\eps(x,t) | \le C, \\
& \nabla \cdot F^\eps(x,t) = 0.
\end{aligned}\right.
\end{equation}
\begin{rem}
We recall that $T_\eps$ given by \eqref{def:T_eps} takes into account all the index $i$. Therefore, every estimate obtained in this section holds on the time interval $[0,T_\eps]$ for every blob simultaneously.
\end{rem}
This section is devoted to the proof of the following intermediate result describing how $\supp \omega^\eps(\cdot,t)$ ``mostly'' shrinks to $z(t)$ as $\eps \to 0$ at least on the time interval $T_\eps$.
\begin{theo}\label{theo:reduit}
The following properties hold true.
\begin{itemize}
 \item[$(i)$] There exists constants $C_T$ and $\eps_{T}$ such that for every $\eps \in (0,\eps_T]$, by letting $r_\eps = \left(\frac{\ln |\ln\eps|}{|\ln \eps|}\right)^{1/2}$, we have 
 \begin{equation*}
 \sup_{t \in [0,T_\eps]}\left| \gamma - \int_{B(z(t),r_\eps)} \omega^\eps(x,t)\dd x\right| \le \frac{C_T}{\ln |\ln \eps|},
 \end{equation*}
 and
 \begin{equation*}
 \sup_{t \in [0,T_\eps]} \left|\frac{1}{\gamma}\int x\omega^\eps(x,t)\dd x - z(t)\right| \le \frac{C_T}{\sqrt{|\ln\eps|}}.
 \end{equation*}
 
 \item[$(ii)$] For every $\kappa \in (0, \frac{1}{4})$, there exists constants $C_{\kappa,T}$ and $\eps_{\kappa,T}>0$, such that for every $\eps \in (0,\eps_{\kappa,T}]$ and for every $t \in [0,T_\eps]$, we have
 \begin{equation*}
 \supp \omega^\eps(\cdot,t) \subset \left\{ x \in \R^2 \, , \, \Big||x| - |z|\Big| \le \frac{C_{\kappa,T}}{|\ln\eps|^\kappa} \right\}.
 \end{equation*}
\end{itemize}
\end{theo}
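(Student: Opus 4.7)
The plan is to adapt the moment-propagation method of Marchioro, as refined in~\cite{Hientzsch_Lacave_Miot_2022_Dynamics_of_PV_for_the_lake_eq}, using crucially the decomposition $v^\eps = v_K^\eps + v_L^\eps + v_R^\eps$ of Proposition~\ref{prop:decomp_vitesse}. The three pieces play very different roles: the singular part $v_K^\eps$ has a kernel whose antisymmetrization in $(x,y)$ is bounded (thanks to~\eqref{eq:Diff_T_Lip}), so symmetric pairings with $\omega^\eps$ produce only $O(1)$, hence $O(|\ln\eps|^{-1})$ after the rescaling; the piece $v_L^\eps$ is of order $|\ln\eps|$ on concentrated data but is parallel to $x^\perp$ (so $v_L^\eps\cdot x\equiv 0$) and drives exactly the rotation at speed $\nu$ without any radial component; finally, $v_R^\eps$ and $F^\eps$ are uniformly bounded on $\mathcal{A}_{\eta_0}$ by Proposition~\ref{prop:S_K_borné_Lip} and~\eqref{hyp:F_reduced}, contributing only $O(|\ln\eps|^{-1})$.

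For (i), I introduce the center of mass $b^\eps(t):=\gamma^{-1}\int x\,\omega^\eps(x,t)\,dx$. Differentiating in time, the $v_K^\eps$ contribution is bounded by symmetrization, the $v_R^\eps,F^\eps$ contributions are bounded directly, and the $v_L^\eps$ contribution, after using concentration and the smoothness of $H$ and $\mathcal{T}$, reduces to $\nu(b^\eps)^\perp$ plus an error of size $\sqrt{\Xi^\eps/|\ln\eps|}$ where $\Xi^\eps(t):=\int|x-b^\eps|^2\omega^\eps$ is the moment of inertia. A parallel estimate bounds $\Xi^\eps$, and Gronwall closes the bound $|b^\eps(t)-z(t)|\le C_T/\sqrt{|\ln\eps|}$. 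For the mass statement, I control the logarithmic moment
$$I^\eps(t)=-\iint \ln|x-y|\,\omega^\eps(x,t)\,\omega^\eps(y,t)\,dx\,dy,$$
which is nearly conserved along the flow: the $v_K^\eps$-transport term cancels by symmetrization, and the $v_L^\eps,v_R^\eps,F^\eps$-transport terms produce only $O(|\ln\eps|^{-1})$ errors. Starting from $I^\eps(0)=\gamma^2|\ln\eps|+O(1)$ (thanks to $\supp\omega_0^\eps\subset B(z_0,\eps)$ and $\omega_0^\eps\le M_0/\eps^2$), a Chebyshev-type argument converts this near-conservation into the mass defect $C_T/\ln|\ln\eps|$ at the scale $r_\eps=(\ln|\ln\eps|/|\ln\eps|)^{1/2}$.

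For (ii), the crucial structural identity is $v_L^\eps(x)\cdot x\equiv 0$, which removes the $|\ln\eps|$-size contribution from the radial drift along the Lagrangian flow $X^\eps$ of $(v^\eps+F^\eps)/|\ln\eps|$:
$$\frac{d}{dt}\bigl(|X^\eps|^2-r_0^2\bigr)=\frac{2}{|\ln\eps|}\,X^\eps\cdot\bigl(v_K^\eps+v_R^\eps+F^\eps\bigr)(X^\eps,t).$$
The $v_R^\eps,F^\eps$ terms give $O(|\ln\eps|^{-1})$. The $v_K^\eps$ contribution, pointwise of size $\eps^{-1}$, is controlled through a high-moment estimate on $M_p^\eps(t):=\int(|x|^2-r_0^2)^{2p}\omega^\eps\,dx$ for $p$ scaling as a power of $|\ln\eps|$, together with the weak confinement from (i) and the $L^\infty$ bound on $\omega^\eps$. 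Chebyshev then yields the support confinement at rate $|\ln\eps|^{-\kappa}$ for every $\kappa<1/4$. The main obstacle lies precisely here: the radial component of $v_K^\eps$ is singular of order $\eps^{-1}$, and extracting from it a small factor requires exploiting the weak confinement of (i) and the double integration against $\omega^\eps$ simultaneously. The $W^{1,\infty}$ regularity of $S_{K,\mathcal{U}}$ given by Proposition~\ref{prop:Biot_Savart}, stronger than the corresponding property in~\cite{Cao_Wan_2022_Structure_of_Green}, is essential here: it ensures that $v_R^\eps$ is a genuinely bounded perturbation rather than merely log-Lipschitz, which is what allows the bootstrap determining the exponent $\kappa<1/4$ to close.
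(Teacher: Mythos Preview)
Your outline has the right architecture (center of mass, moment of inertia, $v_L^\eps\cdot x\equiv 0$), but it skips the one ingredient that actually closes the estimates: the \emph{variance} bound on the local energy $\psi^\eps$. In the paper this is Corollary~\ref{lem:variance},
\[
\int \Big|\gamma\psi^\eps(x,t)-\int\psi^\eps(y,t)\omega^\eps(y,t)\,\dd y\Big|^2\omega^\eps(x,t)\,\dd x=\mathcal{O}(|\ln\eps|),
\]
and it is obtained not by ``concentration and smoothness'' but from a delicate combination of (a) near-conservation of the full energy $E^\eps(t)=-\iint\mathcal{G}_{K,\mathcal{U}}\,\omega^\eps\omega^\eps$ (Lemma~\ref{lem:E(t)}), (b) the one-sided pointwise bound $-\psi^\eps(x)\le \gamma\frac{|X|}{2\pi h}|\ln\eps|+\mathcal{O}(1)$ (Lemma~\ref{lem:encadrement_psi}), and (c) the radial-moment identity $J_2^\eps(t)-\gamma r_0^2=\mathcal{O}(|\ln\eps|^{-1})$ (Lemma~\ref{lem:J}). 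Without this variance bound, your Gr\"onwall on $\Xi^\eps$ does not close at the rate $1/|\ln\eps|$: the $v_L^\eps$ contribution to $\der{}{t}\Xi^\eps$ is $\int(x-b^\eps)\cdot\frac{x^\perp}{2|X|^2}\psi^\eps\omega^\eps$, and since $|\psi^\eps|\lesssim|\ln\eps|$ pointwise while $\nabla\psi^\eps$ is only $\mathcal{O}(\eps^{-1})$, the best you get by Cauchy--Schwarz alone is $\der{}{t}\Xi^\eps\le C\sqrt{\Xi^\eps}+\cdots$, hence only $\Xi^\eps=\mathcal{O}(1)$. The paper uses the variance bound with Cauchy--Schwarz to replace $\psi^\eps$ by its mean and gain the missing factor $|\ln\eps|^{-1/2}$; your claimed error $\sqrt{\Xi^\eps/|\ln\eps|}$ for the $v_L^\eps$ term in $\der{}{t}b^\eps$ is precisely what the variance argument delivers, but nothing in your sketch produces it.

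Two further differences worth flagging. First, your route to the mass statement via near-conservation of $-\iint\ln|x-y|\,\omega^\eps\omega^\eps$ is not the paper's: the paper simply uses $\Xi^\eps\le C/|\ln\eps|$ and Chebyshev on $|x-z(t)|^2$, which is shorter. Your claim that ``the $v_K^\eps$-transport term cancels by symmetrization'' is not immediate here: the kernel of $v_K^\eps$ is $H(x,y)\,\Diff\mathcal{T}(x)\frac{\mathcal{T}(x)-\mathcal{T}(y)}{|\mathcal{T}(x)-\mathcal{T}(y)|^2}$, not $\frac{x-y}{|x-y|^2}$, so the triple symmetrization leaves nontrivial remainders; the paper instead conserves the \emph{actual} energy $E^\eps$ (for which $v^\eps=\nabla^\perp\Psi^\eps$ gives a structural cancellation, see the proof of Lemma~\ref{lem:E(t)}). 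Second, for~(ii) you propose high moments $M_p^\eps=\int(|x|^2-r_0^2)^{2p}\omega^\eps$ with $p$ growing in $|\ln\eps|$; the paper uses a different device, an iterative estimate on the smoothed exterior mass $\mu_t(R,\eta)$ (Lemmas~\ref{lemma:Rt}--\ref{lemma-step}) borrowed from \cite{Mar3}, and it is this iteration that produces the threshold $\kappa<1/4$. Your high-moment scheme may or may not reach the same exponent, but as written it is only a heuristic.
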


The plan of the proof of Theorem~\ref{theo:reduit} is the following. We derive precise estimates on energy and vorticity moments of $\omega^\eps$ to obtain the weak localization $(i)$. Then we reproduce the now classical arguments (see \cite{Mar3,Hientzsch_Lacave_Miot_2022_Dynamics_of_PV_for_the_lake_eq}) to obtain the strong localization $(ii)$. Even though these properties are only obtained on the time interval $[0,T_\eps]$, the fact that the localization from $(ii)$ is better that the \emph{a priori} localization will yield in the end that $T_\eps = T$ for every $\eps$ small enough.

\subsection{Preliminary computations}
Let us introduce a few preliminary technical lemmas. We start with a useful formula, which is a consequence of Remark~\ref{rem:omega_i} (recalling that the flow $X_\eps$ associated to the velocity field $v_\eps + F_\eps$ is divergence free):
\begin{lemme}\label{lem:deriv}
Let $\alpha \in C^1(\cal U\times [0,T_\eps],\R)$. Then for every $t \in [0,T_\eps]$,
\begin{equation*}
 \der{}{t} \int \alpha(x,t) \omega^\eps(x,t) \dd x = \int \partial_t \alpha(x,t)\omega^\eps(x,t)+\frac{1}{|\ln\eps|}\int \nabla \alpha(x,t) \cdot \big(v^\eps(x,t)+F^\eps(x,t)\big)\omega^\eps(x,t) \dd x.
\end{equation*}
\end{lemme}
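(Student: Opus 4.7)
The plan is to exploit the Lagrangian representation of $\omega^\eps$ along the characteristics of the velocity field $\frac{1}{|\ln\eps|}(v^\eps+F^\eps)$. Since $v^\eps$ is log-Lipschitz (by the Calder\'on--Zygmund estimate recalled in Section~\ref{subsec:2D}) and $F^\eps$ is Lipschitz and divergence-free on $\mathcal{A}_{\eta_0}$ by Lemma~\ref{lem:hyp_F}, the transport field remains log-Lipschitz, so by the Cauchy--Lipschitz / DiPerna--Lions theory there exists a unique measure-preserving Lagrangian flow $X_\eps(\cdot,t)$ such that $t\mapsto X_\eps(x,t)$ is $C^1$ on $[0,T_\eps]$ for a.e.\ $x$ and solves
\begin{equation*}
 \partial_t X_\eps(x,t)=\frac{1}{|\ln\eps|}\bigl(v^\eps+F^\eps\bigr)\bigl(X_\eps(x,t),t\bigr),\qquad X_\eps(x,0)=x.
\end{equation*}
In the spirit of Remark~\ref{rem:omega_i} (now applied to the reduced single-vortex dynamics \eqref{eq:Helico2D_rescaled_F}), one has the push-forward identity $\omega^\eps(\cdot,t)=X_\eps(\cdot,t)_\#\omega_0^\eps$, equivalently
\begin{equation*}
 \int \alpha(x,t)\,\omega^\eps(x,t)\,\dd x=\int \alpha\bigl(X_\eps(x,t),t\bigr)\,\omega_0^\eps(x)\,\dd x.
\end{equation*}

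The second step is simply to differentiate this identity under the integral sign. Applying the chain rule to the integrand pointwise gives
\begin{equation*}
 \frac{\dd}{\dd t}\alpha\bigl(X_\eps(x,t),t\bigr)=\partial_t\alpha\bigl(X_\eps(x,t),t\bigr)+\frac{1}{|\ln\eps|}\nabla\alpha\bigl(X_\eps(x,t),t\bigr)\cdot\bigl(v^\eps+F^\eps\bigr)\bigl(X_\eps(x,t),t\bigr),
\end{equation*}
and then pushing forward back via $X_\eps(\cdot,t)_\#\omega_0^\eps=\omega^\eps(\cdot,t)$ yields the claim. The differentiation under the integral is legitimate because $\omega_0^\eps$ has compact support inside $\mathcal{U}$, the characteristics remain in the compact set $\overline{\mathcal{A}_{\eta_0}}$ on $[0,T_\eps]$ by \eqref{eq:a_priori_strong_loc}, and on this set $\alpha\in C^1$, $v^\eps\in L^\infty$ (from $\Psi^\eps\in W^{2,p}$ for any $p<\infty$), $F^\eps\in L^\infty$, so the integrand in the difference quotient is dominated by an $L^1(\omega_0^\eps\,\dd x)$ function, allowing dominated convergence.

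I do not expect a serious obstacle here: the statement is essentially the weak formulation of the transport equation tested against a time-dependent smooth function, and everything reduces to the already established Lagrangian structure. The only point that requires some care is the regularity of $t\mapsto X_\eps(x,t)$ needed to differentiate pointwise; this is supplied by the log-Lipschitz regularity of the transport field, and an alternative—if one prefers to avoid any reference to the flow—is to approximate $\omega_0^\eps$ by smooth data, use the classical transport identity, and pass to the limit using the $L^p$ stability of the transport equation and the continuity in time of $t\mapsto\omega^\eps(\cdot,t)$ in a suitable weak sense.
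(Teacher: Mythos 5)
Your proof is correct and follows exactly the route the paper intends: the lemma is stated there as an immediate consequence of the push-forward representation $\omega^\eps(\cdot,t)=X_\eps(\cdot,t)_\#\omega_0^\eps$ from Remark~\ref{rem:omega_i}, combined with the pointwise differentiability of $t\mapsto X_\eps(x,t)$ already established via the log-Lipschitz regularity of the velocity in Section~\ref{subsec:2D}, which is precisely your argument. Your added justification of the differentiation under the integral sign (compact support, boundedness of $v^\eps+F^\eps$ on $\overline{\mathcal{A}_{\eta_0}}$, dominated convergence) only makes explicit what the paper leaves implicit.
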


\begin{lemme}\label{lem:lnT}
There exists a constant $C$ independent of $\eps$ such that for $\eps$ small enough and for every $t \in [0,T_\eps]$, we have
\begin{equation*}
 \left|\iint \ln |\mathcal{T}(x)-\mathcal{T}(y)| \omega^\eps(x,t)\omega^\eps(y,t) \dd x \dd y\right| \le C |\ln \eps|. 
\end{equation*}
Moreover, at time $0$, we have 
\begin{equation*}
 \iint \ln |\mathcal{T}(x)-\mathcal{T}(y)| \omega^\eps_0(x)\omega^\eps_0(y) \dd x \dd y = -\gamma^2 |\ln\eps| + \mathcal{O}(1)
\end{equation*}
as $\eps \to 0$.
\end{lemme}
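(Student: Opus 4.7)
The plan is to treat the two estimates separately: the expansion at $t=0$ follows from a direct rescaling that exploits the bi-Lipschitz character of $\mathcal{T}$, while the $t>0$ bound relies on the conservation of the $L^\infty$ norm along the (measure-preserving) Lagrangian flow associated to $v^\eps+F^\eps$.

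For the initial-time expansion, I would first note that by \eqref{eq:Diff_T_Lip} the map $\mathcal{T}$ is bi-Lipschitz on $B(0,R_{\mathcal{U}})$, so $\bigl|\ln|\mathcal{T}(x)-\mathcal{T}(y)|-\ln|x-y|\bigr|$ is uniformly bounded on $\supp\omega_0^\eps\times \supp\omega_0^\eps$; it is therefore enough to analyze $\iint \ln|x-y|\,\omega_0^\eps(x)\omega_0^\eps(y)\dd x \dd y$. The rescaling $x = z_0+\eps u$, $y = z_0 + \eps v$, combined with $\tilde\omega(u):=\eps^2\omega_0^\eps(z_0+\eps u)$, which satisfies $0\le \tilde\omega\le M_0$, $\supp\tilde\omega\subset B(0,1)$ and $\int\tilde\omega=\gamma$, then gives
\begin{equation*}
 \iint \ln|x-y|\,\omega_0^\eps(x)\omega_0^\eps(y)\dd x \dd y = \gamma^2\ln\eps + \iint \ln|u-v|\,\tilde\omega(u)\tilde\omega(v)\dd u\dd v,
\end{equation*}
and the last double integral is bounded by $M_0^2\iint_{B(0,1)^2}|\ln|u-v||\dd u\dd v<\infty$. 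Writing $\ln\eps = -|\ln\eps|$ yields the expansion.

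For the bound on $[0,T_\eps]$, I would split $\ln = \ln^+-\ln^-$. Since $\omega^\eps(\cdot,t)$ is supported in the bounded set $\mathcal{U}$ and $\mathcal{T}$ is Lipschitz on $B(0,R_{\mathcal{U}})$, the quantity $\ln^+|\mathcal{T}(x)-\mathcal{T}(y)|$ is uniformly bounded, contributing only $\mathcal{O}(\gamma^2)$. For the negative part, the bi-Lipschitz bound reduces matters to controlling $\iint \ln^-|x-y|\,\omega^\eps(x,t)\omega^\eps(y,t)\dd x\dd y$. The essential input is that, since $F^\eps$ is divergence free by \eqref{hyp:F_reduced}, the flow associated to $v^\eps+F^\eps$ is measure preserving, so that $\|\omega^\eps(\cdot,t)\|_{L^\infty}\le M_0\eps^{-2}$ and $\|\omega^\eps(\cdot,t)\|_{L^1}=\gamma$ for all $t\in[0,T_\eps]$. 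Fixing $y$ and splitting $\R^2$ into $B(y,\eps)$ and its complement, the outer piece is bounded by $|\ln\eps|\,\gamma$, while the inner piece is bounded via $\|\omega^\eps(\cdot,t)\|_{L^\infty}$ by $M_0\eps^{-2}\int_{B(0,\eps)}|\ln|z||\dd z = \mathcal{O}(|\ln\eps|)$; integrating in $y$ against $\omega^\eps(y,t)$ produces the announced $C|\ln\eps|$ bound.

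The only delicate point is the negative part at $t>0$: the solution $\omega^\eps(\cdot,t)$ is no longer supported in a ball of radius $\eps$, so a direct rescaling as at $t=0$ is not available. The dimensional balance $\|\omega^\eps\|_{L^\infty}\eps^2\sim 1$, compared with the integral of $|\ln|z||$ on a ball of radius $\eps$ being of order $\eps^2|\ln\eps|$, is precisely what produces the logarithmic (and not worse) bound and dictates that the splitting be made at radius $\eps$.
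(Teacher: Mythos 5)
Your proposal is correct, and it proves both statements by a route that is genuinely different in its key ingredient. For the uniform-in-time bound, the paper first isolates $\iint \ln|x-y|\,\omega^\eps\omega^\eps$ exactly as you do via \eqref{eq:Diff_T_Lip}, gets the upper bound from the boundedness of $\mathcal{A}_{\eta_0}$ and the sign of $\omega^\eps$, but obtains the lower bound from the rearrangement inequality of Lemma~\ref{lem:rearrangement}, which gives the \emph{sharp} pointwise estimate $-\int \ln|x-y|\,\omega^\eps(y,t)\dd y \le \gamma|\ln\eps| + \mathcal{O}(1)$; your elementary splitting of $\R^2$ into $B(y,\eps)$ and its complement, using only $\|\omega^\eps(\cdot,t)\|_{L^1}=\gamma$ and $\|\omega^\eps(\cdot,t)\|_{L^\infty}\le M_0\eps^{-2}$, gives the same order $C|\ln\eps|$ but with the non-sharp constant $\gamma+\pi M_0$ (the two regions double-count mass, whereas the rearrangement bound does not). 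This is entirely sufficient for the lemma as stated, and your argument is more self-contained since it avoids invoking Lemma~\ref{lem:rearrangement}; the one thing the paper's choice buys is that the sharp constant $\gamma$ in the pointwise bound is reused verbatim later (in Lemma~\ref{lem:encadrement_psi}, where the precise coefficient of $|\ln\eps|$ does matter), so your version of the argument could not simply be ``reproduced'' there. For the expansion at $t=0$, the paper combines the rearrangement lower bound with the upper bound $\ln|x-y|\le\ln(2\eps)$ coming from $\supp\omega_0^\eps\subset B(z_0,\eps)$, whereas your rescaling $x=z_0+\eps u$ with $\tilde\omega(u)=\eps^2\omega_0^\eps(z_0+\eps u)$ produces both bounds at once in the form $\gamma^2\ln\eps+\mathcal{O}(1)$; this is arguably cleaner and equally rigorous, since $\iint_{B(0,1)^2}\bigl|\ln|u-v|\bigr|\dd u\dd v<\infty$.
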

\begin{proof}
Notice first that
\begin{multline}\label{eq:totale}
 \iint \ln |\mathcal{T}(x)-\mathcal{T}(y)| \omega^\eps(x,t)\omega^\eps(y,t) \dd x \dd y = \iint \ln\frac{|\mathcal{T}(x)-\mathcal{T}(y)|}{|x-y|}\omega^\eps(x,t)\omega^\eps(y,t)\dd x \dd y \\ + \iint \ln|x-y|\omega^\eps(x,t)\omega^\eps(y,t)\dd x \dd y .
\end{multline}
By \eqref{eq:Diff_T_Lip}, there exists a constant $C$ such that for every $x,y \in \mathcal{A}_{\eta_0}$,
\begin{equation*}
 \left| \ln\frac{|\mathcal{T}(x)-\mathcal{T}(y)|}{|x-y|} \right| \le C,
\end{equation*}
hence, recalling that $\|\omega^\eps(\cdot,t)\|_{L^1}=\gamma$ is independent of $\eps$, 
\begin{equation}\label{eq:terme_important}
 \iint \ln\frac{|\mathcal{T}(x)-\mathcal{T}(y)|}{|x-y|}\omega^\eps(x,t)\omega^\eps(y,t)\dd x \dd y = \mathcal{O}(1)
\end{equation}
as $\eps \to 0$, namely is bounded uniformly in $\eps$.

We now observe that there exists a constant $C$ such that for every $x,y \in \mathcal{A}_{\eta_0}$, 
\[
\ln|x-y| \le C,
\]
and thus, recalling that $\omega^\eps \geq 0$,
\begin{equation}\label{eq:maj_below}
 \iint \ln |x-y| \omega^\eps(x,t)\omega^\eps(y,t) \dd x \dd y \le C.
\end{equation}
We now apply Lemma B.1 from \cite{Hientzsch_Lacave_Miot_2022_Dynamics_of_PV_for_the_lake_eq}, which is recalled as Lemma~\ref{lem:rearrangement}, to $g(s) = -\ln(s)\mathds{1}_{(0,1)}$, $M = M_0\eps^{-2}$. Indeed by \eqref{hyp:omega0_reduced} and since the $L^1$ and $L^\infty$ norms of $\omega^\epsilon$ are conserved, then $\omega^\eps(\cdot,t) \in \mathcal{E}_{M,\gamma}$ for every $t \in [0,T_\eps]$. Therefore, by letting $r = \eps\sqrt{\frac{\gamma}{\pi M_0}}$, we obtain that
\begin{multline*}
 -\int \ln|x-y|\omega^\eps(y,t)\dd y \le -2\pi M_0\eps^{-2} \int_0^r s \ln(s)\dd s \\ = -2\pi M_0\eps^{-2} \left(\frac{r^2 \ln r}{2} + \mathcal{O}(\eps^2)\right) = - \gamma\ln \eps + \mathcal{O}(1)= \gamma|\ln \eps| + \mathcal{O}(1)
\end{multline*}
as $\eps \to 0$, which yields that
\begin{equation}\label{eq:maj_above}
 \iint \ln |x-y| \omega^\eps(x,t)\omega^\eps(y,t) \dd x \dd y \ge -\gamma^2 |\ln \eps| + \mathcal{O}(1).
\end{equation}
Gathering \eqref{eq:totale}, \eqref{eq:terme_important}, \eqref{eq:maj_below} and \eqref{eq:maj_above}, we conclude that for $\eps$ small enough,
\begin{equation*}
 \left|\iint \ln |\mathcal{T}(x)-\mathcal{T}(y)| \omega^\eps(x,t)\omega^\eps(y,t) \dd x \dd y\right| \le C |\ln \eps|. 
\end{equation*}
At time 0, we improve the upper-bound obtained in \eqref{eq:maj_below} by using the strong localization hypothesis \eqref{hyp:omega0_reduced}. Indeed we have that
\begin{equation*}
 \forall x,y \in \supp \omega_0^\eps, \quad |x-y| \le 2\eps,
\end{equation*}
therefore, we get that
\begin{equation*}
 \iint \ln|x-y|\omega^\eps_0(x)\omega^\eps_0(y)\dd x \dd y \le \iint \ln(2\eps)\omega^\eps_0(x)\omega^\eps_0(y)\dd x \dd y \le -\gamma^2|\ln\eps| + \mathcal{O}(1),
\end{equation*}
which, combined with \eqref{eq:totale}, \eqref{eq:terme_important} and \eqref{eq:maj_above} gives that
\begin{equation*}
 \iint \ln |\mathcal{T}(x)-\mathcal{T}(y)| \omega^\eps_0(x,t)\omega^\eps_0(y,t) \dd x \dd y = -\gamma^2 |\ln\eps| + \mathcal{O}(1).
\end{equation*}
\end{proof}

\begin{rem}\label{rem:lnT}
We also have that 
\begin{equation*}
 \iint \Big|\ln |\mathcal{T}(x)-\mathcal{T}(y)|\Big| \omega^\eps(x,t)\omega^\eps(y,t) \dd x \dd y \le C |\ln \eps|
\end{equation*}
and
\begin{equation*}
 \iint \Big|\ln |\mathcal{T}(x)-\mathcal{T}(y)|\Big| \omega^\eps_0(x)\omega^\eps_0(y) \dd x \dd y = \gamma^2 |\ln\eps| + \mathcal{O}(1)
\end{equation*}
with very few adaptations to the proof of Lemma~\ref{lem:lnT} since the for every $t \in [0,T_\eps]$, $\omega^\eps(\cdot,t)$ is supported in $\mathcal{A}_{\eta_0}$ which is bounded.
\end{rem}

\subsection{Estimates on the local energy}

We introduce the local energy
\begin{equation}\label{def:psi}
 \psi^\eps(x,t) := \int G_K(x,y) \omega^\eps(y,t)\dd y.
\end{equation}
In particular, we have from \eqref{def:v_L} that
\begin{equation}\label{eq:v_L-psi}
 v_L^\eps(x,t) = \frac{x^\perp}{2|X|^2}\psi^\eps(x,t).
\end{equation}
We establish an important lemma on the local energy $\psi^\eps$ defined at \eqref{def:psi}. 
\begin{lemme}\label{lem:encadrement_psi}
There exists a constant $C$ such that for every $t\le T_\eps$ and $x \in \mathcal{A}_{\eta_0}$,
\begin{equation*}
 -C \le -\psi^\eps(x,t) \le \gamma\frac{|X|}{2\pi h}|\ln\eps| + \mathcal{O}(1),
\end{equation*}
as $\eps \to 0$.
\end{lemme}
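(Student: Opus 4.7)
The plan is to exploit the product structure $G_K(x,y) = H(x,y)\ln|\mathcal{T}(x)-\mathcal{T}(y)|$ given by \eqref{def:G_K}, in which the smooth factor $H(x,y) = \sqrt{|X||Y|}/(2\pi h)$ is bounded above and below on $\mathcal{A}_{\eta_0}\times \mathcal{A}_{\eta_0}$, while the logarithm carries the whole singularity. The lower bound $-\psi^\eps(x,t) \ge -C$ is the easy half: for $t \in [0,T_\eps]$ one has $\supp \omega^\eps(\cdot,t) \subset \mathcal{A}_{\eta_0}$ by \eqref{eq:a_priori_strong_loc}, and the diameter of $\mathcal{T}(\mathcal{A}_{\eta_0})$ is finite, so $\ln|\mathcal{T}(x)-\mathcal{T}(y)| \le C$; combining this with $H \ge 0$, $\omega^\eps \ge 0$ and $\|\omega^\eps(\cdot,t)\|_{L^1} = \gamma$ yields $\psi^\eps(x,t) \le C$.

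For the upper bound on $-\psi^\eps$, the precise prefactor $|X|/(2\pi h) = H(x,x)$ motivates the decomposition
\begin{equation*}
-\psi^\eps(x,t) = H(x,x)\int \bigl(-\ln|\mathcal{T}(x)-\mathcal{T}(y)|\bigr)\omega^\eps(y,t)\,\dd y \,-\, \int \bigl[H(x,y)-H(x,x)\bigr]\ln|\mathcal{T}(x)-\mathcal{T}(y)|\omega^\eps(y,t)\,\dd y.
\end{equation*}
The smoothness of $H$ on $\mathcal{A}_{\eta_0}\times \mathcal{A}_{\eta_0}$ yields $|H(x,y)-H(x,x)| \le C|x-y|$, and the bi-Lipschitz bound \eqref{eq:Diff_T_Lip} gives $|\ln|\mathcal{T}(x)-\mathcal{T}(y)|| \le |\ln|x-y|| + C$. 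Since $s\mapsto s|\ln s|$ is bounded on bounded intervals, the product $|x-y|\cdot|\ln|\mathcal{T}(x)-\mathcal{T}(y)||$ is uniformly controlled for $x,y\in\mathcal{A}_{\eta_0}$, and the second integral contributes an $O(\gamma)=O(1)$ remainder.

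For the main integral, the bi-Lipschitz inequality \eqref{eq:Diff_T_Lip} allows us to replace $-\ln|\mathcal{T}(x)-\mathcal{T}(y)|$ by $-\ln|x-y|+C$, after which the rearrangement inequality (Lemma~\ref{lem:rearrangement}) applied to $g(s) = -\ln(s)\mathds{1}_{(0,1)}(s)$ and $M = M_0\eps^{-2}$, exactly as in the proof of Lemma~\ref{lem:lnT}, gives $\int(-\ln|x-y|)\omega^\eps(y,t)\,\dd y \le \gamma|\ln\eps| + O(1)$. Multiplying by the uniformly bounded factor $H(x,x) = |X|/(2\pi h)$ produces the announced $\gamma\,|X|(2\pi h)^{-1}|\ln\eps| + O(1)$ upper bound for $-\psi^\eps$.

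The main obstacle is obtaining the \emph{sharp} prefactor $|X|/(2\pi h)$ rather than a looser constant such as $\|H\|_{L^\infty(\mathcal{A}_{\eta_0}\times \mathcal{A}_{\eta_0})}$: this is what the split $H(x,y) = H(x,x) + [H(x,y)-H(x,x)]$ is engineered to achieve, with the Lipschitz remainder absorbing the off-diagonal contribution. All other ingredients are the bi-Lipschitz property of $\mathcal{T}$ recalled in \eqref{eq:Diff_T_Lip} and the rearrangement argument already used in Lemma~\ref{lem:lnT}.
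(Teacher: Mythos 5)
Your proof is correct and follows essentially the same route as the paper: the split $H(x,y)=H(x,x)+[H(x,y)-H(x,x)]$ is exactly the paper's decomposition $\frac{\sqrt{|X||Y|}}{2\pi h}=\frac{|X|}{2\pi h}+\frac{\sqrt{|X|}}{2\pi h}\big(\sqrt{|Y|}-\sqrt{|X|}\big)$, the off-diagonal term is absorbed via the Lipschitz bound on $H$ together with \eqref{eq:Diff_T_Lip}, and the main term is handled by the rearrangement argument of Lemma~\ref{lem:lnT}. Nothing further is needed.
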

\begin{proof}
We recall that
\begin{equation*}
 \psi^\eps(x,t) = \int G_K(x,y) \omega^\eps(y,t)\dd y = \int \frac{\sqrt{|X||Y|}}{2\pi h} \ln|\mathcal{T}(x)-\mathcal{T}(y)|\omega^\eps(y,t)\dd y.
\end{equation*}
Since for every $x \in \mathcal{A}_{\eta_0}$, $|x| \le r_0 + \eta_0$, and since $\mathcal{T}$ is bounded on $B(0,r_0+\eta_0)$, there exists a constant $C$ independent of $\eps$ such that
\begin{equation*}
 \psi^\eps(x,t) \le C.
\end{equation*}
Now we write that
\begin{equation*}
 \psi^\eps(x,t) = \frac{|X|}{2\pi h}\int \ln|\mathcal{T}(x)-\mathcal{T}(y)|\omega^\eps(y,t)\dd y + \frac{\sqrt{|X|}}{2\pi h}\int\big( \sqrt{|Y|} - \sqrt{|X|}\big) \ln |\mathcal{T}(x)-\mathcal{T}(y)| \omega^\eps(y,t)\dd y.
\end{equation*}
Since the map $x \mapsto \sqrt{|X|}$ is smooth on the set $\mathcal{A}_{\eta_0}$, there exists a constant $C$ such that for all $x,y \in \mathcal{A}_{\eta_0}$, $\left|\sqrt{|Y|} - \sqrt{|X|}\right| \le C|x-y|$ and therefore
\begin{equation*}
 \frac{\sqrt{|X|}}{2\pi h}\int\big( \sqrt{|Y|} - \sqrt{|X|}\big) \ln |\mathcal{T}(x)-\mathcal{T}(y)| \omega^\eps(y,t)\dd y = \mathcal{O}(1),
\end{equation*}
so that
\begin{align*}
 -\psi^\eps(x,t) & = -\frac{|X|}{2\pi h}\int \ln|\mathcal{T}(x)-\mathcal{T}(y)|\omega^\eps(y,t)\dd y + \mathcal{O}(1).
\end{align*}
Reproducing the arguments of Lemma~\ref{lem:lnT}, we obtain that
\begin{equation*}
 -\psi^\eps(x,t) \le \gamma\frac{|X|}{2\pi h}|\ln\eps| + \mathcal{O}(1).
\end{equation*}
\end{proof}

\subsection{Estimate on radial vorticity moments}
For every $k \ge 1$, for every $t \le T_\eps$, let
\begin{equation}\label{def:J}
 J_k^\eps(t) = \int |x|^k \omega^\eps(x,t)\dd x.
\end{equation}
Following the observation in \cite{Hientzsch_Lacave_Miot_2022_Dynamics_of_PV_for_the_lake_eq}, we obtain sharp estimates on $J_k^\eps$ using the fact that $v_L^\eps(x,t) \cdot x = 0$.
\begin{lemme}\label{lem:J} Assume that either $k\ge2$, or $k=1$ and $r_0\neq0$. Then for every $t \le T_\eps$ , there exists a constant $C_k$ such that
\begin{equation*}
\big|J_k^\eps(t) - \gamma r_0^k\big| \le \frac{C_k}{|\ln\eps|}.
\end{equation*}
\end{lemme}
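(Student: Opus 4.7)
The plan is to differentiate $J_k^\eps$ in time using Lemma~\ref{lem:deriv} applied to $\alpha(x)=|x|^k$, then to control each piece of the resulting integrand uniformly in $\eps$, and finally to integrate in $t$ starting from the obvious initial estimate $J_k^\eps(0) = \gamma r_0^k + \mathcal{O}(\eps)$, which follows from $\supp\omega_0^\eps\subset B(z_0,\eps)$ and $\int\omega_0^\eps=\gamma$. Decomposing $v^\eps = v_K^\eps+v_L^\eps+v_R^\eps$ via Proposition~\ref{prop:decomp_vitesse}, Lemma~\ref{lem:deriv} yields
\[
\frac{d}{dt}J_k^\eps(t) \;=\; \frac{k}{|\ln\eps|}\int |x|^{k-2}\, x\cdot\bigl(v_K^\eps+v_L^\eps+v_R^\eps+F^\eps\bigr)(x,t)\,\omega^\eps(x,t)\,\dd x.
\]
The $v_L^\eps$ contribution vanishes identically since $v_L^\eps$ is proportional to $x^\perp$ by~\eqref{eq:v_L-psi}. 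The $v_R^\eps$ contribution is controlled using the integral representation~\eqref{def:v_R} and Proposition~\ref{prop:S_K_borné_Lip} to get $\|v_R^\eps(\cdot,t)\|_{L^\infty(\mathcal{A}_{\eta_0})}\le C\gamma$; the $F^\eps$ contribution is bounded via \eqref{hyp:F_reduced} together with $\supp\omega^\eps(\cdot,t)\subset \mathcal{A}_{\eta_0}$ from \eqref{eq:a_priori_strong_loc}. Each of these three terms is thus $\mathcal{O}(1)$ in $\eps$.

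The main obstacle is the singular piece $I_K^\eps(t):=\int|x|^{k-2}x\cdot v_K^\eps(x,t)\omega^\eps(x,t)\,\dd x$, since $v_K^\eps$ is pointwise of order $\eps^{-1}$. I would exploit the radial structure of $\mathcal{T}$ provided by Lemma~\ref{lem:existence_difféo}. Using that $\Diff\mathcal{T}(x)=f(x)\Lambda(x)$ is symmetric and that $x^\perp$ is an eigenvector of $\Lambda(x)$ with eigenvalue $1$ (recall $N(x)x^\perp=0$), combined with $a\cdot b^\perp=-a^\perp\cdot b$, $\mathcal{T}(x)=f(x)x$, and $x^\perp\cdot x = 0$, a direct computation gives
\[
x\cdot v_K^\eps(x,t) \;=\; -f(x)\int H(x,y)\,\frac{f(y)\,(x_2y_1-x_1y_2)}{|\mathcal{T}(x)-\mathcal{T}(y)|^2}\,\omega^\eps(y,t)\,\dd y.
\]
In the resulting double integral, the kernel $H(x,y)f(x)f(y)/|\mathcal{T}(x)-\mathcal{T}(y)|^2$ is symmetric in $(x,y)$ while the factor $x_2y_1-x_1y_2$ is antisymmetric, so symmetrizing against $\omega^\eps(x,t)\omega^\eps(y,t)\dd x\dd y$ produces
\[
I_K^\eps(t) \;=\; \frac{1}{2}\iint \frac{H(x,y)f(x)f(y)\bigl(|y|^{k-2}-|x|^{k-2}\bigr)(x_2y_1-x_1y_2)}{|\mathcal{T}(x)-\mathcal{T}(y)|^2}\,\omega^\eps(x,t)\omega^\eps(y,t)\,\dd x\dd y.
\]

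To bound this symmetrized integrand, observe that under the hypothesis of the lemma (either $k\geq2$, or $k=1$ with $r_0\neq 0$ so that $\mathcal{A}_{\eta_0}$ is bounded away from $0$) the map $r\mapsto r^{k-2}$ is Lipschitz on $\{|x|:x\in\mathcal{A}_{\eta_0}\}$, hence $\bigl||x|^{k-2}-|y|^{k-2}\bigr|\le C|x-y|$. The decomposition $x_2y_1-x_1y_2=(x-y)_2y_1-(x-y)_1y_2$ gives $|x_2y_1-x_1y_2|\le C|x-y|$, and \eqref{eq:Diff_T_Lip} yields $|\mathcal{T}(x)-\mathcal{T}(y)|\ge c|x-y|$; together with the uniform boundedness of $H$ and $f$ on $\mathcal{A}_{\eta_0}$, the integrand is bounded uniformly in $\eps$, so $|I_K^\eps(t)|\le C\gamma^2$. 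Gathering all the estimates produces $\bigl|\tfrac{d}{dt}J_k^\eps(t)\bigr|\le C_k/|\ln\eps|$ on $[0,T_\eps]$, hence $|J_k^\eps(t)-J_k^\eps(0)|\le C_kT/|\ln\eps|$, and combining this with the initial value $J_k^\eps(0)=\gamma r_0^k+\mathcal{O}(\eps)$ established at the outset concludes the proof.
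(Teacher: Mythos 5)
Your proof is correct and follows essentially the same route as the paper: differentiate via Lemma~\ref{lem:deriv}, drop the $v_L^\eps$ contribution by orthogonality to $x$, bound $v_R^\eps$ and $F^\eps$ directly, and handle the singular $v_K^\eps$ term by symmetrization combined with the Lipschitz regularity on $\mathcal{A}_{\eta_0}$ that requires $k\ge2$ or $r_0\neq0$. The only (harmless) difference is organizational: the paper symmetrizes the vector-valued map $x\mapsto|x|^{k-2}\Diff\mathcal{T}(x)x^\perp$ as a whole, whereas you first exploit $\Lambda(x)x^\perp=x^\perp$ and $\mathcal{T}(x)=f(x)x$ to reduce to the antisymmetric scalar factor $x_2y_1-x_1y_2$ before symmetrizing $|x|^{k-2}$.
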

\begin{proof}
Let $k\ge2$, or $k=1$ and $r_0\neq0$. We compute using Lemma~\ref{lem:deriv}:
\begin{align*}
 \der{}{t} J_k^\eps(t) & = \frac{k}{|\ln\eps|} \int |x|^{k-2} x \cdot (v^\eps + F^\eps)(x,t) \omega^\eps(x,t)\dd x.
\end{align*}
Then, using the decomposition~\eqref{eq:decomp:v} and \eqref{def:v_L}, we have that
\begin{align*}
 \der{}{t} J_k^\eps(t) & = \frac{k}{|\ln\eps|} \int |x|^{k-2} x \cdot v_K^\eps(x,t) \omega^\eps(x,t)\dd x + \frac{k}{|\ln\eps|} \int |x|^{k-2} x \cdot (v_R^\eps + F^\eps)(x,t) \omega^\eps(x,t)\dd x \\
 & := \frac{k}{|\ln\eps|} \big(A_1 + A_2\big).
\end{align*}
We start with $A_2$. Recalling that $F^\eps$ is bounded from \eqref{hyp:F_reduced}, and observing that $v_R^\eps$, defined at relation~\eqref{def:v_R}, is bounded by Proposition~\ref{prop:S_K_borné_Lip}, we obtain that 
\begin{equation*}
 A_2 = \mathcal{O}(1).
\end{equation*}
We then turn to $A_1$. We compute using the definition \eqref{def:v_K} of $v_K^\eps$ and the symmetry of $\Diff \cal T$:
\begin{align*} A_1 & =\iint H(x,y) |x|^{k-2} x \cdot \left(\Diff \mathcal{T}(x)\frac{\mathcal{T}(x)-\mathcal{T}(y)}{\big|\mathcal{T}(x)-\mathcal{T}(y) \big|^2}\right)^\perp\omega^\eps(y,t)\omega^\eps(x,t)\dd x \dd y \\
& = -\iint H(x,y) |x|^{k-2} x^\perp \cdot \Diff \mathcal{T}(x)\frac{\mathcal{T}(x)-\mathcal{T}(y)}{\big|\mathcal{T}(x)-\mathcal{T}(y) \big|^2}\omega^\eps(y,t)\omega^\eps(x,t)\dd x \dd y \\
 & = - \frac12\iint H(x,y) \Big(|x|^{k-2} \Diff \mathcal{T}(x) x^\perp - |y|^{k-2}\Diff \mathcal{T}(y) y^\perp \Big)\cdot\frac{\mathcal{T}(x)-\mathcal{T}(y)}{\big|\mathcal{T}(x)-\mathcal{T}(y) \big|^2}\omega^\eps(y,t)\omega^\eps(x,t)\dd x \dd y.
\end{align*}
Now we observe that since $k \ge 2$ or $r_0 \neq 0$, the map $x \mapsto |x|^{k-2} \Diff \mathcal{T}(x) x^\perp $ is smooth on $\mathcal{A}_{\eta_0}$ so for every $x,y \in \supp \omega^\eps(\cdot,t)$,
\begin{equation*}
 \left|\left(|x|^{k-2} \Diff \mathcal{T}(x) x^\perp - |y|^{k-2}\Diff \mathcal{T}(y)y \right)\cdot\frac{\mathcal{T}(x)-\mathcal{T}(y)}{\big|\mathcal{T}(x)-\mathcal{T}(y) \big|^2}\right| \le C\frac{|x-y|}{|T(x)-T(y)|} \le C,
\end{equation*}
and thus
\begin{equation*}
 A_1 = \mathcal{O}(1).
\end{equation*}

In conclusion, we proved that
\begin{equation*}
 \der{}{t} J_k^\eps(t) = \mathcal{O}\left(\frac{1}{|\ln\eps|} \right).
\end{equation*}
Since by \eqref{hyp:omega0_reduced},
\begin{equation*}
 |J_k^\eps(0) - \gamma r_0^k| = \left|\int \big(|x|^k-r_0^k\big)\omega_0^\eps(x)\dd x\right| = \mathcal{O}(\eps),
\end{equation*}
then,
\begin{equation*}
 J_k^\eps(t) - \gamma r_0^k = \mathcal{O}\left(\frac{1}{|\ln\eps|}\right).
\end{equation*}
\end{proof}

\subsection{Estimates on the energy}

Let
\begin{equation*}
 E^\eps(t) := - \iint \mathcal{G}_{K,\mathcal{U}}(x,y) \omega^\eps(x,t) \omega^\eps(y,t)\dd x \dd y.
\end{equation*}
We observe quickly that $E^\eps$ is the $2D$ energy of the helical solution, in the sense that
\begin{equation*}
 E^\eps(t) = \int \|K^{1/2}(x)v^\eps(x)^\perp\|^2 \dd x,
\end{equation*}
since
\begin{align*}
 \int \|K^{1/2}(x)v^\eps(x)^\perp\|^2 \dd x & = \int v^\eps(x,t)^\perp\cdot \big( K(x) v^\eps(x,t)^\perp\big) \dd x \\
 & = \int \nabla \Psi^\eps(x,t) \cdot \big(K(x)\nabla \Psi^\eps(x,t)\big) \dd x \\
 & = -\int \Psi^\eps(x,t)\nabla \cdot \big(K(x)\nabla \Psi^\eps(x,t)\big)\dd x \\
 & = -\int \Psi^\eps(x,t) \omega^\eps(x,t)\dd x \\
 & = -\iint \mathcal{G}_{K,\mathcal{U}}(x,y) \omega^\eps(x,t)\omega^\eps(y,t) \dd x \dd y \\
 & = E^\eps(t).
\end{align*}
We begin our study of $E^\eps$ with this first lemma.
\begin{lemme}\label{lem:E-psiomega}
For every $t \in [0,T_\eps]$, 
\begin{equation*}
 E^\eps(t) = -\iint G_K(x,y) \omega^\eps(x,t)\omega^\eps(y,t) \dd x \dd y + \mathcal{O}(1) = -\int\psi^\eps(x,t)\omega^\eps(x,t)\dd x + \mathcal{O}(1),
\end{equation*}
as $\eps \to 0$.
\end{lemme}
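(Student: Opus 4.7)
The plan is to exploit the decomposition of the Green's function $\mathcal{G}_{K,\mathcal{U}} = G_K + S_{K,\mathcal{U}}$ provided by Proposition~\ref{prop:Biot_Savart}. Substituting into the definition of $E^\eps(t)$, we get
\begin{equation*}
E^\eps(t) = -\iint G_K(x,y)\omega^\eps(x,t)\omega^\eps(y,t)\dd x \dd y - \iint S_{K,\mathcal{U}}(x,y)\omega^\eps(x,t)\omega^\eps(y,t)\dd x \dd y.
\end{equation*}
The first equality of the lemma will then follow provided the second integral on the right is $\mathcal{O}(1)$.

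To handle that remainder, I would invoke the \emph{a priori} localization property \eqref{eq:a_priori_strong_loc}, which states that for every $t\in[0,T_\eps]$, $\supp \omega^\eps(\cdot,t) \subset \mathcal{A}_{\eta_0}$. Therefore both variables $x$ and $y$ of the integral lie in $\mathcal{A}_{\eta_0}$. Combined with Proposition~\ref{prop:S_K_borné_Lip}, which gives a uniform bound $|S_{K,\mathcal{U}}(x,y)|\le C$ for $x,y$ in such an annular region, and using the conservation of the $L^1$ norm $\|\omega^\eps(\cdot,t)\|_{L^1} = \gamma$, we obtain
\begin{equation*}
\left|\iint S_{K,\mathcal{U}}(x,y)\omega^\eps(x,t)\omega^\eps(y,t)\dd x \dd y\right| \le C\gamma^2 = \mathcal{O}(1),
\end{equation*}
which proves the first equality.

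The second equality is then immediate from Fubini's theorem and the definition \eqref{def:psi} of $\psi^\eps$: the first integral is precisely
\begin{equation*}
\iint G_K(x,y)\omega^\eps(x,t)\omega^\eps(y,t)\dd x \dd y = \int \omega^\eps(x,t)\left(\int G_K(x,y)\omega^\eps(y,t)\dd y\right)\dd x = \int \psi^\eps(x,t)\omega^\eps(x,t)\dd x,
\end{equation*}
so no further estimate is required. There is no real obstacle here; the crucial input is the $W^{1,\infty}$ regularity of the regular part $S_{K,\mathcal{U}}$ established in Proposition~\ref{prop:Biot_Savart} (and its uniform boundedness on the annular supports in Proposition~\ref{prop:S_K_borné_Lip}), which is precisely the improvement over \cite{Cao_Wan_2022_Structure_of_Green} that the authors emphasized earlier.
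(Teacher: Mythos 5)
Your proposal is correct and follows exactly the paper's own argument: split $\mathcal{G}_{K,\mathcal{U}}=G_K+S_{K,\mathcal{U}}$, bound the $S_{K,\mathcal{U}}$ contribution by $C\|\omega^\eps\|_{L^1}^2$ using the a priori support property \eqref{eq:a_priori_strong_loc} together with Proposition~\ref{prop:S_K_borné_Lip}, and identify the $G_K$ term with $\int\psi^\eps\omega^\eps$ via the definition \eqref{def:psi}. No differences worth noting.
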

\begin{proof}
We observe first that
\begin{equation*}
 E^\eps(t) = -\iint G_K(x,y) \omega^\eps(x,t)\omega^\eps(y,t) \dd x \dd y - \iint S_{K,\mathcal{U}}(x,y) \omega^\eps(x,t)\omega^\eps(y,t) \dd x \dd y,
\end{equation*}
and that by the definition \eqref{def:psi} of $\psi^\eps$
\begin{equation*}
 \iint G_K(x,y) \omega^\eps(x,t)\omega^\eps(y,t) \dd x \dd y = \int\psi^\eps(x,t)\omega^\eps(x,t)\dd x.
\end{equation*}
Then, using Proposition~\ref{prop:S_K_borné_Lip} and the hypotheses \eqref{eq:a_priori_strong_loc} on the support of $\omega^\eps$ on $[0,T_\eps]$, there exists a constant $C$ such that
\begin{equation*}
 \left| \iint S_{K,\mathcal{U}}(x,y) \omega^\eps(x,t)\omega^\eps(y,t) \dd x \dd y\right| \le C \|\omega^\eps\|_{L^1}^2\leq C.
\end{equation*}
\end{proof}
We now estimate the energy at time 0.
\begin{lemme}\label{lem:E(0)}
We have
\begin{equation*}
 E^\eps(0) = \gamma^2\frac{\sqrt{r_0^2+h^2}}{2\pi h} |\ln\eps| + \mathcal{O}(1)
\end{equation*}
as $\eps \to 0$.
\end{lemme}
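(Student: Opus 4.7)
The plan is to combine Lemma~\ref{lem:E-psiomega} at $t=0$ with the computation of the model integral $\iint \ln|\mathcal{T}(x)-\mathcal{T}(y)|\omega_0^\eps(x)\omega_0^\eps(y)\dd x\dd y$ carried out at the end of Lemma~\ref{lem:lnT}, after freezing the smooth prefactor $H$ at the point $z_0$.

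First, I would apply Lemma~\ref{lem:E-psiomega} at $t=0$ to reduce to showing
\begin{equation*}
 -\iint G_K(x,y)\omega_0^\eps(x)\omega_0^\eps(y)\dd x\dd y = \gamma^2\frac{\sqrt{r_0^2+h^2}}{2\pi h}|\ln\eps| + \mathcal{O}(1).
\end{equation*}
Recalling the factorization $G_K(x,y)=H(x,y)\ln|\mathcal{T}(x)-\mathcal{T}(y)|$ from \eqref{def:H}--\eqref{def:G_K}, I write
\begin{equation*}
 G_K(x,y) = H(z_0,z_0)\ln|\mathcal{T}(x)-\mathcal{T}(y)| + \bigl(H(x,y)-H(z_0,z_0)\bigr)\ln|\mathcal{T}(x)-\mathcal{T}(y)|,
\end{equation*}
and note that $H(z_0,z_0)=|Z_0|/(2\pi h)=\sqrt{r_0^2+h^2}/(2\pi h)$.

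For the main term, the estimate at time $0$ provided by Lemma~\ref{lem:lnT} gives
\begin{equation*}
 -H(z_0,z_0)\iint \ln|\mathcal{T}(x)-\mathcal{T}(y)|\omega_0^\eps(x)\omega_0^\eps(y)\dd x\dd y = \frac{\sqrt{r_0^2+h^2}}{2\pi h}\gamma^2|\ln\eps| + \mathcal{O}(1),
\end{equation*}
which is exactly the desired leading order. For the remainder term, since $H$ is smooth on the compact set $\overline{\mathcal{A}_{\eta_0}}$ and $\supp\omega_0^\eps\subset B(z_0,\eps)$, we have $|H(x,y)-H(z_0,z_0)|\leq C\eps$ uniformly on $\supp\omega_0^\eps\times\supp\omega_0^\eps$. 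Combining this with the bound $\iint |\ln|\mathcal{T}(x)-\mathcal{T}(y)||\omega_0^\eps(x)\omega_0^\eps(y)\dd x\dd y = \mathcal{O}(|\ln\eps|)$ from Remark~\ref{rem:lnT} at time $0$ yields
\begin{equation*}
 \left|\iint\bigl(H(x,y)-H(z_0,z_0)\bigr)\ln|\mathcal{T}(x)-\mathcal{T}(y)|\omega_0^\eps(x)\omega_0^\eps(y)\dd x\dd y\right| \leq C\eps|\ln\eps| = \mathcal{O}(1),
\end{equation*}
which is absorbed in the error. Summing the two contributions gives the claimed expansion.

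The only mild care is in the remainder estimate: I must use the full $W^{1,\infty}$-type smoothness of $H$ on $\mathcal{A}_{\eta_0}$ (coming from the explicit formula $H(x,y)=\sqrt{|X||Y|}/(2\pi h)$ and the lower bound $|X|\geq h$), so that the prefactor difference is $\mathcal{O}(\eps)$ rather than merely $\mathcal{O}(1)$. Once this is granted, the freezing procedure is standard and there is no genuine obstacle.
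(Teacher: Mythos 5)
Your proposal is correct and follows essentially the same route as the paper: after invoking Lemma~\ref{lem:E-psiomega} at $t=0$, the paper likewise freezes the smooth prefactor $\frac{\sqrt{|X||Y|}}{2\pi h}$ at its value $\frac{\sqrt{r_0^2+h^2}}{2\pi h}=H(z_0,z_0)$, bounds the difference by $C\eps$ on $\supp\omega_0^\eps\subset B(z_0,\eps)$, controls the resulting error by $C\eps|\ln\eps|=\mathcal{O}(1)$ via Remark~\ref{rem:lnT}, and extracts the leading term from the time-zero estimate of Lemma~\ref{lem:lnT}. No gaps.
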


\begin{proof}
We recall from \eqref{def:G_K} and \eqref{def:H} that
\begin{equation*}
 \iint G_K(x,y) \omega_0^\eps(x)\omega_0^\eps(y) \dd x \dd y = \iint \frac{\sqrt{|X||Y|}}{2 \pi h} \ln|\mathcal{T}(x)-\mathcal{T}(y)| \omega_0^\eps(x)\omega_0^\eps(y) \dd x \dd y 
\end{equation*}
and thus from Lemma~\ref{lem:E-psiomega} we have that
\begin{multline*}
 E^\eps(0) = -\iint \frac{\sqrt{r_0^2+h^2}}{2\pi h} \ln|\mathcal{T}(x)-\mathcal{T}(y)| \omega_0^\eps(x)\omega_0^\eps(y) \dd x \dd y \\ - \iint \Big( \frac{\sqrt{|X||Y|}}{2 \pi h}-\frac{\sqrt{r_0^2+h^2}}{2\pi h}\Big) \ln|\mathcal{T}(x)-\mathcal{T}(y)| \omega_0^\eps(x)\omega_0^\eps(y) \dd x \dd y + \mathcal{O}(1). 
\end{multline*}
We recall from \eqref{hyp:omega0_reduced} that $\supp \omega_0^\eps \subset B(z_0,\eps)$ so that for every $x,y \in \supp \omega_0^\eps$,
\begin{equation*}
 \Big| \frac{\sqrt{|X||Y|}}{2 \pi h}-\frac{\sqrt{r_0^2+h^2}}{2\pi h}\Big| \le C \eps.
\end{equation*}
Using Lemma~\ref{lem:lnT} two times (in light of Remark~\ref{rem:lnT}), we get that
\begin{align*}
 &\left|\iint \Big( \frac{\sqrt{|X||Y|}}{2 \pi h}-\frac{r_0^2+h^2}{2\pi h}\Big) \ln|\mathcal{T}(x)-\mathcal{T}(y)|\omega_0^\eps(x)\omega_0^\eps(y) \dd x \dd y \right| \le C \eps |\ln\eps|,
\end{align*}
and
\begin{equation*}
 -\iint \ln|\mathcal{T}(x)-\mathcal{T}(y)| \omega_0^\eps(x)\omega_0^\eps(y) \dd x \dd y = \gamma^2 |\ln \eps| + \mathcal{O}(1).
\end{equation*}
We conclude that
\begin{equation*}
 E^\eps(0) = \gamma^2\frac{\sqrt{r_0^2+h^2}}{2\pi h} |\ln\eps| + \mathcal{O}(1).
\end{equation*}
\end{proof}

\begin{lemme}\label{lem:E(t)}
For every $t\le T_\eps$, we have 
\begin{equation*}
 E^\eps(t) = \gamma^2\frac{\sqrt{r_0^2+h^2}}{2\pi h} |\ln\eps| + \mathcal{O}(1)
\end{equation*}
\end{lemme}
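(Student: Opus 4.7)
The plan is to show that $|E^\eps(t) - E^\eps(0)| = O(1)$ uniformly on $[0, T_\eps]$, and then conclude via Lemma~\ref{lem:E(0)}. The heart of the argument is the standard incompressible-Euler cancellation $v^\eps \cdot \nabla \Psi^\eps = 0$, so that only the exterior field $F^\eps$ contributes to the time derivative of $E^\eps$. Since the other blobs of vorticity are separated from $\omega^\eps$ on $[0, T_\eps]$, the associated stream function is smooth on $\supp \omega^\eps$, which is what allows us to tame the $F^\eps$ contribution.

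First, I differentiate $E^\eps$ in time. Using the symmetry of $\mathcal{G}_{K,\mathcal{U}}$, the transport equation \eqref{eq:Helico2D_rescaled_F}, and an integration by parts (justified by $\div(v^\eps + F^\eps) = 0$ and $\Psi^\eps|_{\partial \mathcal{U}} = 0$), one obtains, at least formally and rigorously via the Lagrangian flow of Section~\ref{subsec:2D} and the identity $\omega^\eps(\cdot,t) = X_\eps(\cdot,t)_\#\omega^\eps_0$,
\begin{equation*}
 \frac{\dd}{\dd t} E^\eps(t) = -\frac{2}{|\ln \eps|}\int \omega^\eps(x,t)\, \nabla \Psi^\eps(x,t) \cdot (v^\eps + F^\eps)(x,t)\, \dd x.
\end{equation*}
The $v^\eps$ contribution vanishes pointwise because $v^\eps = \nabla^\perp \Psi^\eps$. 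It thus suffices to show that $\int \omega^\eps \nabla \Psi^\eps \cdot F^\eps\, \dd x = O(|\ln \eps|)$.

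The key point is that $F^\eps = \nabla^\perp \Phi^\eps$ where $\Phi^\eps(x,t) := \sum_{j \neq i}\int \mathcal{G}_{K,\mathcal{U}}(x,y)\omega_j^\eps(y,t)\, \dd y$. By \eqref{def:T_eps} and \eqref{eq:there_is_dist}, on $[0, T_\eps]$ the supports of the $\omega_j^\eps$ (for $j\neq i$) lie in annuli disjoint from $\mathcal{A}_{\eta_0}$, so exactly as in the proof of Lemma~\ref{lem:hyp_F}, $\nabla \Phi^\eps$ is bounded and Lipschitz on $\mathcal{A}_{\eta_0}$ uniformly in $\eps$ and $t$. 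Using $\nabla \Psi^\eps \cdot \nabla^\perp \Phi^\eps = -v^\eps \cdot \nabla \Phi^\eps$, the task reduces to estimating $\int \omega^\eps v^\eps \cdot \nabla \Phi^\eps\, \dd x$, which I split via Proposition~\ref{prop:decomp_vitesse}. The $v_R^\eps$ part is $O(1)$ by Proposition~\ref{prop:S_K_borné_Lip}. For $v_L^\eps = \frac{x^\perp}{2|X|^2}\psi^\eps$, expanding $\psi^\eps$ as a double integral against $G_K$ times the bounded factor $\frac{x^\perp \cdot \nabla \Phi^\eps(x)}{2|X|^2}$ and invoking Lemma~\ref{lem:lnT} together with Remark~\ref{rem:lnT} gives $O(|\ln \eps|)$. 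For $v_K^\eps$, I symmetrize the resulting double integral in $(x,y)$ using $H(x,y) = H(y,x)$, the antisymmetry $u(y,x) = -u(x,y)$ where $u(x,y) := (\mathcal{T}(x)-\mathcal{T}(y))/|\mathcal{T}(x)-\mathcal{T}(y)|^2$, and the symmetry of $\Diff \mathcal{T}$ given by Lemma~\ref{lem:existence_difféo}; the symmetrized integrand becomes proportional to $H(x,y)\,u(x,y)\cdot[G(y)-G(x)]$ with $G(x) := \Diff \mathcal{T}(x)\nabla^\perp\Phi^\eps(x)$ Lipschitz on $\mathcal{A}_{\eta_0}$. By the bi-Lipschitz estimate \eqref{eq:Diff_T_Lip}, $|u(x,y)| \le C/|x-y|$ while $|G(y)-G(x)| \le C|x-y|$, so the integrand is uniformly bounded and this contribution is $O(\|\omega^\eps\|_{L^1}^2) = O(1)$.

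Summing the three bounds yields $\int \omega^\eps \nabla \Psi^\eps \cdot F^\eps\, \dd x = O(|\ln \eps|)$ uniformly on $[0, T_\eps]$, so $\frac{\dd}{\dd t} E^\eps = O(1)$. Integrating on $[0,t]$ for $t \le T_\eps \le T$ gives $|E^\eps(t) - E^\eps(0)| \le CT$, and combining with Lemma~\ref{lem:E(0)} concludes. The main technical obstacle is the $v_K^\eps$ term, where the pointwise $\eps^{-1}$ singularity of $v^\eps$ must be absorbed by the Lipschitz regularity of $G$; the fact that this cancellation works relies crucially on both the symmetry of $\Diff \mathcal{T}$ and the bi-Lipschitz character of $\mathcal{T}$, which are precisely the structural features of the singular part of the Green's kernel identified in Proposition~\ref{prop:Biot_Savart}.
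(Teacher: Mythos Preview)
Your proof is correct and follows essentially the same route as the paper: both exploit the cancellation $v^\eps\cdot\nabla\Psi^\eps=0$ to reduce $\frac{\dd}{\dd t}E^\eps$ to the $F^\eps$-contribution, then control that term by symmetrizing the double integral so that the $1/|x-y|$ singularity of the kernel is absorbed by the Lipschitz regularity of $F^\eps$ (equivalently, of $\Diff\mathcal{T}\,\nabla^\perp\Phi^\eps$). The only substantive difference is that the paper justifies the time-differentiation by mollifying $\ln$ to a smooth $\ln_\delta$ and letting $\delta\to0$ at fixed $\eps$, whereas you appeal to the Lagrangian representation; both are legitimate, though the paper's regularization argument is spelled out in full while yours is left implicit.
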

\begin{proof}
We shall prove that 
\begin{equation}\label{eq:dE/dt}
 \left|E^\eps(t)-E^\eps(0)\right| = \mathcal{O}(1).
\end{equation}
In order to do so, we write for fixed $\eps$ and $t$:
$$E^\eps(t)=\lim_{\delta \to 0}E^\eps_\delta(t)$$ where 
$$E^\eps_\delta(t)=-\iint \cal G_{K,\cal U,\delta}\omega^\eps(x,t)\omega^\eps(y,t)\dd x \dd y$$
and where $\cal G_{K,\cal U,\delta}$ is obtained from $\cal G_{K,\cal U}$ in the following way:
$$\cal G_{K,\cal U,\delta}= G_{K,\delta}+{S}_{K,U}$$
with
$$G_{K,\delta}(x,y)=
\frac{1}{2\pi} \Big( \det K(x) \det K(y) \Big)^{-1/4} \ln_\delta|\mathcal{T}(x)-\mathcal{T}(y)|
$$
and $\ln_\delta$ is a smooth, even function satisfying $|\nabla \ln_\delta(|x|)\leq C/|x|$ and such that $\ln_\delta|x|=\ln|x|$ on $B(0,\delta)^c$.

Setting 
\begin{equation*}\Psi_{\eps,\delta}(x,t)=\int \cal G_{K,\cal U,\delta}\omega^\eps(y,t)\dd y,
\end{equation*}
we have $\Psi_{\eps,\delta}\in C^1(\mathcal{U}\times[0,T_\eps])$ and therefore by applying Lemma~\ref{lem:deriv} twice we get
\begin{align*}
 -\der{}{t} E^{\eps}_\delta(t) =& 
 \int \omega^\eps(x,t)\left(\partial_t \Psi_{\delta}^\eps(x,t)+\frac{1}{|\ln \eps|}
 \big(v^\eps(x,t)+F^\eps(x,t)\big)\cdot\nabla \Psi_\delta^\eps(x,t)\right)\dd x\\
=& -\frac{1}{|\ln \eps|}\iint 
 \big(v^\eps(y,t)+F^\eps(y,t)\big)\cdot \nabla_y \cal G_{K,U,\delta}(x,y)\omega^\eps(x,t)\omega^\eps(y,t)\dd x \dd y\\
& +\frac{1}{|\ln \eps|}\iint 
\big(v^\eps(x,t)+F^\eps(x,t)\big)\cdot \nabla_x \cal G_{K,U,\delta}(x,y)\omega^\eps(x,t)\omega^\eps(y,t)
\dd x \dd y\\
 =& \frac{1}{|\ln \eps|}\iint 
 \big(v^\eps(x,t)+F^\eps(x,t)\big)\cdot\left( \nabla_y \cal G_{K,U,\delta}(y,x)+ \nabla_x \cal G_{K,U,\delta}(x,y)
 \right)\omega^\eps(x,t)
 \omega^\eps(t,y)\dd x \dd y.
 \end{align*}
Noting that $\cal G_{K,U,\delta}$ is symmetric, we get
\begin{equation*}
 -\der{}{t} E^{\eps}_\delta(t) = \frac{2}{|\ln \eps|}\iint \big(v^\eps(x,t)+F^\eps(x,t)\big)\cdot\nabla_x \cal G_{K,U,\delta}(x,y) \omega^\eps(x,t) \omega^\eps(t,y)\dd x \dd y.
\end{equation*}
Recalling that $v^\eps(x,t)=\int \nabla^\perp \cal G_{K,U}(x,y)\omega^\eps(y,t) \dd y,$ we obtain
\begin{align*}
 -\der{}{t} E^{\eps}_\delta(t) 
=& \frac{2}{|\ln \eps|}\iint 
 \big(v^\eps(x,t)+F^\eps(x,t)\big)\cdot\nabla_x \cal G_{K,U}(x,y)
 \omega^\eps(x,t)
 \omega^\eps(t,y)\dd x \dd y\\
&+ \frac{2}{|\ln \eps|}\iint 
 \big(v^\eps(x,t)+F^\eps(x,t)\big)\cdot
 \left(\nabla_x \cal G_{K,U,\delta}(x,y)-\nabla_x \cal G_{K,U}(x,y)
 \right)\omega^\eps(x,t)
 \omega^\eps(t,y)\dd x \dd y\\
 =&\frac{2}{|\ln \eps|}\iint F^\eps(x,t)\cdot\nabla_x \cal G_{K,U}(x,y)
 \omega^\eps(x,t)
 \omega^\eps(t,y)\dd x \dd y\\
&+ \frac{2}{|\ln \eps|}\iint \omega^\eps(x,t)
 \omega^\eps(t,y)
 \big(v^\eps(x,t)+F^\eps(x,t)\big)\cdot\left(\nabla_x G_{K,\delta}(x,y)-\nabla_x G_{K}(x,y)
 \right)\dd x \dd y\\
 =& \frac{2}{|\ln\eps|} \iint F^\eps(x,t)\cdot \nabla_x G_K(x,y) \omega^\eps(x,t)\omega^\eps(y,t)
 \dd x \dd y \\
 & +\frac{2}{|\ln\eps|} \iint F^\eps(x,t)\cdot
 \nabla_x S_{K,\mathcal{U}}(x,y)
 \omega^\eps(x,t)\omega^\eps(y,t) \dd x \dd y \\
 &+ \frac{2}{|\ln \eps|}\iint 
 \big(v^\eps(x,t)+F^\eps(x,t)\big)\cdot\left(\nabla_x G_{K,\delta}(x,y)-\nabla_x G_{K}(x,y)
 \right)\omega^\eps(x,t)
 \omega^\eps(t,y)\dd x \dd y\\
 :=& A_1 + A_2+A_{3,\delta}.
 \end{align*}

Recalling from \eqref{hyp:F_reduced} that $F^\eps$ is bounded on $\mathcal{A}_{\eta_0}$, and using Proposition~\ref{prop:S_K_borné_Lip}, we have that
$$|A_2| \leq \frac{C}{|\ln\eps|}$$ where $C$ does not depend on $\delta$.

Now we compute $A_1$.
\begin{align*}
 A_1 =& \frac{1}{|\ln\eps|}\iint \Big(\nabla_x G_K(x,y)\cdot F^\eps(x,t)+\nabla_xG_K(y,x)\cdot F^\eps(y,t) \Big)\omega^\eps(x,t)\omega^\eps(y,t)\dd x \dd y \\
 =& \frac{1}{|\ln\eps|}\iint \nabla_x G_K(x,y)\cdot \big(F^\eps(x,t)-F^\eps(y,t)\big) \omega^\eps(x,t)\omega^\eps(y,t)\dd x \dd y \\
 & +\frac{1}{|\ln\eps|}\iint \big(\nabla_xG_K(y,x)+\nabla_xG_K(x,y)\big)\cdot F^\eps(y,t) \omega^\eps(x,t)\omega^\eps(y,t)\dd x \dd y \\
 :=& A_{11} + A_{22}.
\end{align*}
By Lemma~\ref{lem:maj_nabla_GK} and \eqref{hyp:F_reduced}, we observe that 
$$A_{11}\leq \frac{C}{|\ln\eps|}.$$
Recalling from \eqref{hyp:F_reduced} that $F$ is bounded, and using \eqref{eq:decomp_G_K}, we have that
\begin{align*}
 |A_{22}| \le& \frac{C}{|\ln\eps|} \int \Big|\nabla_x H(x,y)+\nabla_x H(y,x)\Big| \Big|\ln|\mathcal{T}(x)-\mathcal{T}(y)|\Big|\omega^\eps(x,t)\omega^\eps(y,t)\dd x \dd y \\
 & + \frac{C}{|\ln\eps|}\int H(x,y) \frac{|\Diff \mathcal{T}(x)-\Diff \mathcal{T}(y)|}{\big|\mathcal{T}(x)-\mathcal{T}(y) \big|}\omega^\eps(x,t)\omega^\eps(y,t)\dd x \dd y.
\end{align*}
By Remark~\ref{rem:lnT} and since $H$ and $\mathcal{T}$ are smooth, we obtain that
\begin{align*}
 |A_{22}| & \le \frac{C}{|\ln\eps|} \int \Big|\ln|\mathcal{T}(x)-\mathcal{T}(y)|\Big|\omega^\eps(x,t)\omega^\eps(y,t)\dd x \dd y + \frac{C}{|\ln\eps|} \int \omega^\eps(x,t)\omega^\eps(y,t)\dd x \dd y \\
 & \le C + \frac{C}{|\ln\eps|} \\
 &\leq C
\end{align*}
uniformly with respect to $\delta$.

Finally, we observe that the support of $\nabla_x G_{K,\delta}(x,y)-\nabla_x G_{K}(x,y)$ is included in the set of $x,y$ such that $|x-y|\leq C\delta$ by \eqref{eq:Diff_T_Lip}. Thus using Lemma~\ref{lem:maj_nabla_GK} and using the definition of $\ln_\delta$ we estimate
\begin{align*}
 |A_{3,\delta}|&\leq \frac{C}{|\ln \eps|}\iint_{|x-y|\leq C\delta} 
 \big(|v^\eps(x,t)|+|F^\eps(x,t)|\big)|x-y|^{-1}\omega^\eps(x,t)
 \omega^\eps(t,y)\dd x \dd y\\
 &\leq \frac{C}{|\ln \eps|}\left(\|v^\eps\|_{L^\infty}+\|F^\eps\|_{L^\infty}\right)\|
 \omega^\eps\|_{L^\infty}\delta.
 \end{align*}
The latter diverges as $\eps\to 0$ but letting $\delta$ tend to zero for fixed $\eps$ we obtain
\begin{equation*}
 \limsup_{\delta\to 0} |E^{\eps}_\delta(t)-E^{\eps}_\delta(0)|\leq C T,
\end{equation*}
and the conclusion \eqref{eq:dE/dt} follows.

Recalling Lemma~\ref{lem:E(0)}, we obtain the desired result of the lemma.
\end{proof}

An interesting corollary of Lemma~\ref{lem:E(t)} is the following estimate on the local energy $\psi^\eps$, defined at \eqref{def:psi}.
\begin{coro}\label{lem:variance}
We have for all $t\le T_\eps$ 
\begin{equation*}
 \int \left| \gamma \psi^\eps(x,t) - \int \psi^\eps(y,t) \omega^\eps(y,t)\dd y \right|^2 \omega^\eps(x,t)\dd x = \mathcal{O}\big(|\ln \eps|\big).
\end{equation*}
\end{coro}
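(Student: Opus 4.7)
My plan is to expand the squared integrand as
\[
\int \big(\gamma \psi^\eps - \bar\psi\big)^2 \omega^\eps\,\dd x = \gamma^2 \int (\psi^\eps)^2 \omega^\eps\,\dd x - \gamma \bar\psi^2,
\]
where $\bar\psi := \int \psi^\eps(y,t)\omega^\eps(y,t)\,\dd y$. Lemmas~\ref{lem:E-psiomega} and~\ref{lem:E(t)} give $-\bar\psi = \gamma^2 \frac{\sqrt{r_0^2+h^2}}{2\pi h}|\ln \eps| + \mathcal{O}(1)$, so $\gamma \bar\psi^2$ has magnitude $|\ln\eps|^2$. Since Lemma~\ref{lem:encadrement_psi} only yields the pointwise bound $|\psi^\eps|\le C|\ln\eps|$ on the support of $\omega^\eps$, the naive estimate for $\gamma^2 \int (\psi^\eps)^2\omega^\eps$ is also $\mathcal{O}(|\ln\eps|^2)$. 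The main obstacle is therefore to expose a precise cancellation between these two leading contributions and extract the $\mathcal{O}(|\ln\eps|)$ remainder.

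To achieve this, I decompose $\psi^\eps = \alpha - \Lambda$, with the envelope $\Lambda(x) := \gamma \frac{|X|}{2\pi h}|\ln\eps| + C_0$ chosen so that Lemma~\ref{lem:encadrement_psi} gives simultaneously $\alpha := \psi^\eps + \Lambda \ge 0$ on $\mathcal{A}_{\eta_0}$ and $\alpha \le \Lambda + C = \mathcal{O}(|\ln\eps|)$ uniformly on $\mathcal{A}_{\eta_0}$. This produces
\[
\gamma \psi^\eps(x) - \bar\psi = \Big(\gamma \alpha(x) - \int \alpha \omega^\eps\,\dd y\Big) - \Big(\gamma \Lambda(x) - \int \Lambda \omega^\eps\,\dd y\Big),
\]
so by $(a-b)^2 \le 2a^2+2b^2$ it suffices to bound the two weighted variances separately.

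For the $\alpha$-piece, the cancellation takes place in the mean. Combining $-\bar\psi = \gamma^2 \frac{\sqrt{r_0^2+h^2}}{2\pi h}|\ln\eps|+\mathcal{O}(1)$ with $\int \Lambda\omega^\eps\,\dd y = \gamma \frac{|\ln\eps|}{2\pi h}\int |X|\omega^\eps\,\dd y + C_0 \gamma$ and the moment estimate $\int |X|\omega^\eps\,\dd y = \gamma\sqrt{r_0^2+h^2} + \mathcal{O}(1/|\ln\eps|)$, the $|\ln\eps|$-terms cancel and $\int \alpha\omega^\eps\,\dd y = \mathcal{O}(1)$. The required moment estimate follows by Taylor-expanding $|X|=\sqrt{|x|^2+h^2}$ near $|x|=r_0$ and applying Lemma~\ref{lem:J} to $k=2$ and $k=4$. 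Since $\alpha \ge 0$ and $\|\alpha\|_{L^\infty(\mathcal{A}_{\eta_0})} = \mathcal{O}(|\ln\eps|)$, this yields
\[
\int \Big(\gamma\alpha - \int \alpha \omega^\eps\,\dd y\Big)^2 \omega^\eps\, \dd x \le \gamma^2 \int \alpha^2 \omega^\eps\,\dd x \le \gamma^2\|\alpha\|_{L^\infty(\mathcal{A}_{\eta_0})}\int\alpha\omega^\eps\,\dd x = \mathcal{O}(|\ln\eps|).
\]

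For the $\Lambda$-piece, the additive constant $C_0$ drops out and the problem reduces to controlling $\lambda(x) := \gamma\frac{|X|-\sqrt{r_0^2+h^2}}{2\pi h}|\ln\eps|$. The algebraic identity $|X|-\sqrt{r_0^2+h^2} = (|x|^2-r_0^2)/(|X|+\sqrt{r_0^2+h^2})$ gives $\lambda(x)^2 \le C|\ln\eps|^2(|x|^2-r_0^2)^2$, and expanding the square combined with Lemma~\ref{lem:J} for $k=2$ and $k=4$ yields
\[
\int (|x|^2-r_0^2)^2 \omega^\eps\,\dd x = J_4^\eps - 2r_0^2 J_2^\eps + r_0^4 \gamma = \mathcal{O}(1/|\ln\eps|).
\]
Hence $\int \lambda^2\omega^\eps\,\dd x = \mathcal{O}(|\ln\eps|)$ and $\int(\gamma\Lambda - \int\Lambda\omega^\eps\,\dd y)^2\omega^\eps\,\dd x \le \gamma^2\int\lambda^2\omega^\eps\,\dd x = \mathcal{O}(|\ln\eps|)$. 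Combining the two pieces delivers the claimed $\mathcal{O}(|\ln\eps|)$ bound.
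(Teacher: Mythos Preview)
Your proof is correct, but it takes a more elaborate route than the paper's. The paper expands the square exactly as you do, then bounds the second moment directly: from Lemma~\ref{lem:encadrement_psi} one has $(\psi^\eps)^2 \le \big(\gamma\tfrac{|X|}{2\pi h}|\ln\eps|+\mathcal{O}(1)\big)^2$ on $\mathcal{A}_{\eta_0}$, so
\[
\gamma^2\!\int(\psi^\eps)^2\omega^\eps \le \frac{\gamma^4}{4\pi^2 h^2}|\ln\eps|^2\!\int|X|^2\omega^\eps + \mathcal{O}(|\ln\eps|),
\]
and subtracting $\gamma\bar\psi^2 = \gamma^5\tfrac{r_0^2+h^2}{4\pi^2 h^2}|\ln\eps|^2+\mathcal{O}(|\ln\eps|)$ (from Lemmas~\ref{lem:E-psiomega}--\ref{lem:E(t)}) reduces everything to $\int|X|^2\omega^\eps - \gamma(r_0^2+h^2)=J_2^\eps-\gamma r_0^2=\mathcal{O}(1/|\ln\eps|)$, i.e.\ a single application of Lemma~\ref{lem:J} with $k=2$.

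Your decomposition $\psi^\eps=\alpha-\Lambda$ achieves the same cancellation but in two stages: the $\alpha$-piece via the nice observation $\int\alpha\,\omega^\eps=\mathcal{O}(1)$ together with $0\le\alpha\le\mathcal{O}(|\ln\eps|)$, and the $\Lambda$-piece via $\int(|x|^2-r_0^2)^2\omega^\eps=\mathcal{O}(1/|\ln\eps|)$. This has the virtue of making the mechanism transparent (the nonnegative defect $\alpha$ has small mass in $L^1(\omega^\eps)$), but costs you an extra Taylor expansion of $|X|$ and calls to Lemma~\ref{lem:J} for both $k=2$ and $k=4$, whereas the paper only needs $k=2$.
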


\begin{proof}
We start by noticing that developing the square,
\begin{align*}
 \int \Bigg| \gamma \psi^\eps(x,t) - &\int \psi^\eps(y,t) \omega^\eps(y,t)\dd y \Bigg|^2 \omega^\eps(x,t)\dd x = \\
=& \gamma^2\int (\psi^\eps)^2(x,t) \omega^\eps(x,t) \dd x - \gamma\left(\int \psi^\eps(x,t)\omega^\eps(x,t) \dd x\right)^2.
\end{align*}
Now recalling Lemmas~\ref{lem:E-psiomega} and \ref{lem:E(t)}, we have that
\begin{equation*}
 \left(\int \psi^\eps(x,t)\omega^\eps(x,t) \dd x\right)^2 = \big(E^\eps(t) + \mathcal{O}(1) \big)^2 = \gamma^4\frac{r_0^2+h^2}{4\pi^2 h^2} |\ln\eps|^2 + \mathcal{O}\big(|\ln\eps|\big).
\end{equation*}
Using Lemma~\ref{lem:encadrement_psi}, we obtain that
\begin{align*}
 \gamma^2\int (\psi^\eps)^2(x,t) \omega^\eps(x,t) \dd x & \le \gamma^2\int \left(\gamma\frac{|X|}{2\pi h}|\ln\eps| + \mathcal{O}(1)\right)^2 \omega^\eps(x,t) \dd x \\
 & \le \frac{\gamma^4}{4\pi^2 h^2}|\ln\eps|^2 \int |X|^2 \omega^\eps(x,t) \dd x + \mathcal{O}(|\ln\eps|).
\end{align*}
Therefore,
\begin{multline*}
 \int \left| \gamma \psi^\eps(x,t) - \int \psi^\eps(y,t) \omega^\eps(y,t)\dd y \right|^2 \omega^\eps(x,t)\dd x \le \frac{\gamma^4}{4\pi^2 h^2} |\ln\eps|^2 \left( \int |X|^2 \omega^\eps(x,t)\dd x-\gamma(r_0^2+h^2) \right) \\ +\mathcal{O}(|\ln\eps|).
\end{multline*}
Using Lemma~\ref{lem:J} for $k=2$, we have that
\begin{equation*}
 \int |X|^2 \omega^\eps(x,t) \dd x -\gamma(r_0^2+h^2)= J_2^\eps(t)-\gamma r_0^2 = \mathcal{O}\left(\frac{1}{|\ln\eps|}\right).
\end{equation*}

In conclusion,
\begin{equation*}
 \int \left| \gamma \psi^\eps(x,t) - \int \psi^\eps(y,t) \omega^\eps(y,t)\dd y \right|^2 \omega^\eps(x,t)\dd x \le \mathcal{O}(|\ln\eps|).
\end{equation*}
\end{proof}

\subsection{Estimates on first and second vorticity moments}
Let us introduce the center of mass
\begin{equation*}
 b^\eps(t) = \frac{1}{\gamma}\int x \omega^\eps(x,t) \dd x,
\end{equation*}
and we denote $|B^\eps(t)| = \sqrt{|b^\eps(t)|^2 + h^2}$. Let us also define the center of inertia around $b^\eps$:
\begin{equation}\label{def:I}
 I^\eps(t) = \int |x-b^\eps(t)|^2 \omega^\eps(x,t)\dd x.
\end{equation}
We start by computing the derivative of $b^\eps$ using Lemma~\ref{lem:deriv},
\begin{align*}
 \der{}{t} b^\eps(t) = \frac{1}{\gamma|\ln\eps|}\int \big(v^\eps(x,t) + F^\eps(x,t) \big) \omega^\eps(x,t) \dd x.
\end{align*}
We then establish the following lemma.
\begin{lemme}\label{lem:b} For any $t \le T_\eps$, we have that
\begin{equation*}
 \der{}{t} b^\eps(t) =- \gamma\frac{\sqrt{r_0^2+h^2}}{4\pi h} \frac{b^\eps(t)^\perp}{|B^\eps(t)|^2} + \mathcal{O}\left(\frac{1}{\sqrt{|\ln\eps|}}\right) + \mathcal{O}\left(\sqrt{I^\eps(t)}\right).
\end{equation*}
\end{lemme}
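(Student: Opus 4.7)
The plan is to split the velocity according to Proposition~\ref{prop:decomp_vitesse} as $v^\eps = v_K^\eps + v_L^\eps + v_R^\eps$, show that $v_K^\eps$, $v_R^\eps$ and $F^\eps$ only contribute at order $\mathcal{O}(1/|\ln\eps|)$ to the time derivative, and extract the leading behavior entirely from the $v_L^\eps$ term.

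\textbf{Step 1: small contributions.} For $v_K^\eps$, the integral
\[
\int v_K^\eps(x,t)\omega^\eps(x,t)\dd x = \iint H(x,y)\Bigl(\Diff \mathcal{T}(x)\tfrac{\mathcal{T}(x)-\mathcal{T}(y)}{|\mathcal{T}(x)-\mathcal{T}(y)|^2}\Bigr)^\perp \omega^\eps(x,t)\omega^\eps(y,t)\dd x\dd y
\]
is symmetrized in $(x,y)$ exactly as in the treatment of $A_1$ in Lemma~\ref{lem:J}. Using $H(x,y)=H(y,x)$ and the bound $|\Diff \mathcal{T}(x)-\Diff \mathcal{T}(y)|\le C|\mathcal{T}(x)-\mathcal{T}(y)|$ coming from \eqref{eq:Diff_T_Lip}, the resulting integrand is bounded on $\mathcal{A}_{\eta_0}\times \mathcal{A}_{\eta_0}$, so this term is $\mathcal{O}(1)$ and its contribution to $\dot b^\eps$ is $\mathcal{O}(1/|\ln\eps|)$. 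For $v_R^\eps$, Proposition~\ref{prop:S_K_borné_Lip} gives $\|v_R^\eps(\cdot,t)\|_{L^\infty(\mathcal{A}_{\eta_0})}\le C$, and for $F^\eps$ we use \eqref{hyp:F_reduced}; both contribute $\mathcal{O}(1/|\ln\eps|)$.

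\textbf{Step 2: the $v_L^\eps$ term.} From \eqref{eq:v_L-psi}, introducing the mass-average $\bar\psi^\eps(t):=\gamma^{-1}\int \psi^\eps(y,t)\omega^\eps(y,t)\dd y$, write
\[
\int v_L^\eps \omega^\eps\dd x = \int \tfrac{x^\perp}{2|X|^2}\bigl(\psi^\eps(x,t)-\bar\psi^\eps(t)\bigr)\omega^\eps(x,t)\dd x + \bar\psi^\eps(t)\int \tfrac{x^\perp}{2|X|^2}\omega^\eps(x,t)\dd x.
\]
Applying Cauchy--Schwarz to the first integral and using Corollary~\ref{lem:variance} together with boundedness of $x\mapsto x^\perp/|X|^2$ on $\mathcal{A}_{\eta_0}$ produces a term of size $\mathcal{O}(\sqrt{|\ln\eps|})$, which after division by $\gamma|\ln\eps|$ is $\mathcal{O}(1/\sqrt{|\ln\eps|})$.

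\textbf{Step 3: identification of the leading term.} By Lemmas~\ref{lem:E-psiomega} and~\ref{lem:E(t)},
\[
\bar\psi^\eps(t) = -\gamma\frac{\sqrt{r_0^2+h^2}}{2\pi h}|\ln\eps| + \mathcal{O}(1).
\]
The smooth function $\phi(x):=x^\perp/(2|X|^2)$ is Lipschitz on $\mathcal{A}_{\eta_0}$, so a first-order expansion around $b^\eps(t)$ combined with Cauchy--Schwarz gives
\[
\int \phi(x)\omega^\eps(x,t)\dd x = \gamma\,\phi(b^\eps(t)) + \mathcal{O}\Bigl(\int |x-b^\eps(t)|\omega^\eps(x,t)\dd x\Bigr) = \gamma\frac{b^\eps(t)^\perp}{2|B^\eps(t)|^2} + \mathcal{O}\bigl(\sqrt{I^\eps(t)}\bigr).
\]
Multiplying the expansions of $\bar\psi^\eps(t)$ and of $\int\phi\,\omega^\eps$, then dividing by $\gamma|\ln\eps|$, we obtain
\[
\frac{1}{\gamma|\ln\eps|}\int v_L^\eps\omega^\eps\dd x = -\gamma\frac{\sqrt{r_0^2+h^2}}{4\pi h}\frac{b^\eps(t)^\perp}{|B^\eps(t)|^2} + \mathcal{O}\bigl(1/\sqrt{|\ln\eps|}\bigr) + \mathcal{O}\bigl(\sqrt{I^\eps(t)}\bigr),
\]
where the remaining factors $\bar\psi^\eps(t)\mathcal{O}(\sqrt{I^\eps})/(\gamma|\ln\eps|)$ absorb cleanly since $\bar\psi^\eps/(\gamma|\ln\eps|)$ is bounded. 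Combining with Step~1 yields the claim.

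The main technical point is Step~2: extracting the leading $\bar\psi^\eps$ factor from $v_L^\eps$ requires controlling the variance of $\psi^\eps$ with respect to $\omega^\eps$, which is precisely what Corollary~\ref{lem:variance} (built on the sharp energy identity of Lemma~\ref{lem:E(t)} and the moment estimate of Lemma~\ref{lem:J}) provides. Everything else reduces to Taylor expansion and symmetry cancellation already used earlier.
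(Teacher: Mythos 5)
Your proposal is correct and follows essentially the same route as the paper's proof: decompose $v^\eps$ via Proposition~\ref{prop:decomp_vitesse}, kill $v_K^\eps$ by symmetrization with \eqref{eq:Diff_T_Lip} and bound $v_R^\eps$, $F^\eps$ directly, then extract the leading term from $v_L^\eps$ using Corollary~\ref{lem:variance}, the energy identification of Lemmas~\ref{lem:E-psiomega} and~\ref{lem:E(t)}, and a Lipschitz expansion of $x\mapsto x^\perp/|X|^2$ around $b^\eps(t)$. The only difference is cosmetic (you factor out the mass-average $\bar\psi^\eps$ before expanding $\int \phi\,\omega^\eps$, whereas the paper does these steps in the opposite order), and the error bookkeeping matches.
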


\begin{proof}
Recalling the decomposition~\eqref{eq:decomp:v}, we have that
\begin{equation}\label{eq:decomp_b}
 \der{}{t} b^\eps(t) = \frac{1}{\gamma|\ln\eps|}\int \big(v_K^\eps(x,t) + v_L^\eps(x,t) + v_R^\eps(x,t) + F^\eps(x,t) \big) \omega^\eps(x,t) \dd x.
\end{equation}
By \eqref{hyp:F_reduced}, Proposition~\ref{prop:S_K_borné_Lip} and hypotheses \eqref{hyp:omega0_reduced}, the functions $v_R^\eps$ and $F^\eps$ are bounded in space for $x \in \mathcal{A}_{\eta_0}$ uniformly in time $t \in [0,T_\eps]$ so that
\begin{equation}\label{eq:est1}
 \frac{1}{\gamma|\ln\eps|}\int \big(v_R^\eps(x,t) + F^\eps(x,t) \big) \omega^\eps(x,t) \dd x = \mathcal{O}\left( \frac{1}{|\ln\eps|}\right).
\end{equation}
We turn to the term containing $v_K^\eps$. Recalling its definition \eqref{def:v_K}, that $H(x,y) = H(y,x)$ and \eqref{eq:Diff_T_Lip}, we have
\begin{align*}
 \left| \int v_K^\eps(x,t)\omega^\eps(x,t)\dd x\right|& = \left|\iint H(x,y) \Diff \mathcal{T}(x) \frac{\mathcal{T}(x)-\mathcal{T}(y)}{\big|\mathcal{T}(x)-\mathcal{T}(y) \big|^2}\omega^\eps(x,t)\omega^\eps(y,t)\dd x \dd y \right| \\
 & = \left|\iint H(x,y) \big(\Diff \mathcal{T}(x) - \Diff \mathcal{T}(y)\big) \frac{\mathcal{T}(x)-\mathcal{T}(y)}{2\big|\mathcal{T}(x)-\mathcal{T}(y) \big|^2}\omega^\eps(x,t)\omega^\eps(y,t)\dd x \dd y \right| \\
 & \le C\iint |H(x,y)| \omega^\eps(x,t)\omega^\eps(y,t)\dd x \dd y
\end{align*}
and thus
\begin{equation}\label{eq:est2}
 \frac{1}{\gamma|\ln\eps|}\int v_K^\eps(x,t)\omega^\eps(x,t)\dd x = \mathcal{O}\left(\frac{1}{|\ln\eps|}\right).
\end{equation}
We now estimate the term involving $v_L^\eps$ which is the leading term of the movement. Recalling \eqref{eq:v_L-psi}, we have that
\begin{align*}
 \int v_L^\eps(x,t)\omega^\eps(x,t)\dd x & = \int \frac{x^\perp}{2|X|^2}\psi^\eps(x,t)\omega^\eps(x,t)\dd x.
\end{align*}
Now we compute, using Corollary~\ref{lem:variance} and the Cauchy-Schwarz inequality, that for every $t \le T_\eps$,
\begin{multline*}
 \left|\int v_L^\eps(x,t)\omega^\eps(x,t)\dd x - \frac{1}{\gamma}\int \frac{x^\perp}{2|X|^2}\int \psi^\eps(y,t)\omega^\eps(y,t)\dd y \, \omega^\eps(x,t)\dd x\right| \\ = \frac{1}{\gamma}\left|\int \frac{x^\perp}{2|X|^2} \left(\gamma\psi^\eps(x,t) - \int\psi^\eps(y,t)\omega^\eps(y,t)\dd y\right)\omega^\eps(x,t)\dd x \right|
 = \mathcal{O}\left( \sqrt{|\ln\eps|}\right).
\end{multline*}
Using Lemmas~\ref{lem:E-psiomega} and \ref{lem:E(t)}, we thus infer that
\begin{align*}
 \frac{1}{\gamma}\int \frac{x^\perp}{2|X|^2}\int \psi^\eps(y,t)\omega^\eps(y,t)\dd y \, \omega^\eps(x,t)\dd x & = \frac{-E^\eps(t)}{\gamma}\int \frac{x^\perp}{2|X|^2}\omega^\eps(x,t)\dd x + \mathcal{O}(1) \\
 & = -\frac{\gamma\sqrt{r_0^2 + h^2}}{2\pi h}|\ln \eps|\int \frac{x^\perp}{2|X|^2}\omega^\eps(x,t)\dd x + \mathcal{O}(1).
\end{align*}
We conclude from the two previous relations that
\begin{equation}\label{eq:est3}
 \frac{1}{\gamma|\ln\eps|}\int v_L^\eps(x,t)\omega^\eps(x,t)\dd x = -\frac{\sqrt{r_0^2+h^2}}{4\pi h} \int \frac{x^\perp}{|X|^2}\omega^\eps(x,t)\dd x + \mathcal{O}\left(\frac{1}{\sqrt{|\ln\eps|}}\right).
\end{equation}
At this point, gathering the estimates \eqref{eq:decomp_b}, \eqref{eq:est1}, \eqref{eq:est2} and \eqref{eq:est3}, we have proved that
\begin{equation*}
 \der{}{t} b^\eps(t) = -\frac{\sqrt{r_0^2+h^2}}{4\pi h} \int \frac{x^\perp}{|X|^2}\omega^\eps(x,t)\dd x + \mathcal{O}\left(\frac{1}{\sqrt{|\ln\eps|}}\right) .
\end{equation*}
There remains to evaluate
\begin{equation*}
 \int \frac{x^\perp}{|X|^2}\omega^\eps(x,t)\dd x - \gamma\frac{\big(b^\eps(t)\big)^\perp}{|B^\eps(t)|^2} = \int \left( \frac{x^\perp}{|X|^2}-\frac{\big(b^\eps(t)\big)^\perp}{|B^\eps(t)|^2}\right)\omega^\eps(x,t)\dd x. 
\end{equation*}
Since the function $x \mapsto \frac{x^\perp}{|X|^2}$ is smooth on $\mathcal{A}_{\eta_0}$, there exists a constant $C$ such that
\begin{equation*}
 \left|\int \frac{x^\perp}{|X|^2}\omega^\eps(x,t)\dd x - \gamma\frac{\big(b^\eps(t)\big)^\perp}{|B^\eps(t)|^2} \right| \le \int C|x-b^\eps(t)| \omega^\eps(x,t)\dd x \le C \sqrt{I^\eps(t)}.
\end{equation*}
In the end, we proved that
\begin{equation*}
 \der{}{t} b^\eps(t) =- \gamma\frac{\sqrt{r_0^2+h^2}}{4\pi h} \frac{b^\eps(t)^\perp}{|B^\eps(t)|^2} + \mathcal{O}\left(\frac{1}{\sqrt{|\ln\eps|}}\right) + \mathcal{O}\left(\sqrt{I^\eps(t)}\right).
\end{equation*}
\end{proof}

We now turn to the study of the moment of inertia $I^\eps$.
\begin{lemme}\label{lem:dI/dt}
For every $t \le T_\eps$, we have that
\begin{equation*}
 \der{}{t} I^\eps(t) \le C \Big(I^\eps(t)+ \frac{1}{|\ln\eps|}\Big).
\end{equation*}
\end{lemme}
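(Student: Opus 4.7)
The approach is to apply Lemma~\ref{lem:deriv} with $\alpha(x,t)=|x-b^\eps(t)|^2$. Since $\partial_t\alpha = -2(x-b^\eps)\cdot \tfrac{d}{dt}b^\eps$ and $\int (x-b^\eps(t))\omega^\eps(x,t)\dd x = 0$ by the very definition of $b^\eps$, the contribution from the moving center of mass vanishes, leaving
\[
\frac{d}{dt}I^\eps(t) = \frac{2}{|\ln\eps|}\int (x-b^\eps(t))\cdot(v^\eps+F^\eps)(x,t)\,\omega^\eps(x,t)\dd x.
\]
I would then split $v^\eps = v_K^\eps + v_L^\eps + v_R^\eps$ according to \eqref{eq:decomp:v} and estimate each of the four resulting contributions, using Young's inequality $\sqrt{I^\eps}\,a\le\tfrac12(I^\eps+a^2)$ at the end to repackage cross terms.

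The contributions from the bounded pieces $v_R^\eps$ (Proposition~\ref{prop:S_K_borné_Lip}) and $F^\eps$ (by \eqref{hyp:F_reduced}) are immediate: Cauchy--Schwarz gives $\int (x-b^\eps)\cdot(v_R^\eps+F^\eps)\omega^\eps\dd x \le C\sqrt{I^\eps}$, which after the $1/|\ln\eps|$ prefactor is absorbed into $C(I^\eps+1/|\ln\eps|)$. For the singular piece $v_K^\eps$, I would symmetrize the resulting double integral in $(x,y)$ by swapping the two integration variables, using $H(x,y)=H(y,x)$ together with the antisymmetry of $\Phi(x,y):=(\mathcal{T}(x)-\mathcal{T}(y))/|\mathcal{T}(x)-\mathcal{T}(y)|^2$. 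Writing $(x-b^\eps) = (x-y)+(y-b^\eps)$ in the symmetrized integrand produces a ``diagonal'' contribution of the form $(x-y)\cdot (D\mathcal{T}(x)\Phi)^\perp$, pointwise bounded by $C|x-y|/|\mathcal{T}(x)-\mathcal{T}(y)|=O(1)$ via \eqref{eq:Diff_T_Lip}, and a ``defect'' contribution of the form $(y-b^\eps)\cdot [(D\mathcal{T}(x)-D\mathcal{T}(y))\Phi]^\perp$, pointwise bounded by $C|y-b^\eps|$ using also the Lipschitz regularity of $D\mathcal{T}$. Integrating these against $H(x,y)\omega^\eps\omega^\eps$ and applying Cauchy--Schwarz to the defect term yields $O(1)+O(\sqrt{I^\eps})$, which is again absorbed into the target bound.

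The main difficulty is the leading-order rotational piece $v_L^\eps(x,t) = g(x)\psi^\eps(x,t)$, where $g(x):=x^\perp/(2|X|^2)$, because $\psi^\eps$ is only known to be of size $|\ln\eps|$ by Lemma~\ref{lem:encadrement_psi}. A direct Cauchy--Schwarz bound would produce a $|\ln\eps|\sqrt{I^\eps}$ term, too large by a factor $|\ln\eps|$. The fix is to exploit the orthogonality $(x-b^\eps)\cdot(x-b^\eps)^\perp=0$ through the decomposition
\[
(x-b^\eps)\cdot g(x) = (x-b^\eps)\cdot\big(g(x)-g(b^\eps)\big) + g(b^\eps)\cdot(x-b^\eps).
\]
Since $g$ is Lipschitz on $\mathcal{A}_{\eta_0}$, the first summand satisfies $|(x-b^\eps)\cdot(g(x)-g(b^\eps))|\le C|x-b^\eps|^2$ and, combined with $|\psi^\eps|\le C|\ln\eps|$, contributes at most $C|\ln\eps|I^\eps$ to the integral. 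For the second summand, I would use $\int(x-b^\eps)\omega^\eps\dd x=0$ to replace $\psi^\eps$ by $\psi^\eps-\bar\psi$ with $\bar\psi:=\gamma^{-1}\int\psi^\eps\omega^\eps\dd y$, then combine Cauchy--Schwarz with the variance estimate of Corollary~\ref{lem:variance} to obtain $O(\sqrt{|\ln\eps|\,I^\eps})$. Dividing by $|\ln\eps|$ and applying Young's inequality to the cross term $\sqrt{I^\eps/|\ln\eps|}\le\tfrac12(I^\eps+1/|\ln\eps|)$ gives the desired $C(I^\eps+1/|\ln\eps|)$ bound for the $v_L^\eps$ contribution. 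This last step is the hard one: the leading-order rotation of size $|\ln\eps|$ must be cancelled to first order by the orthogonality of $x^\perp$ to $x-b^\eps$ modulo a Lipschitz error, and the remaining residual is tamed only by the variance estimate of Corollary~\ref{lem:variance}.
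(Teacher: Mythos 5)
Your proposal is correct and follows essentially the same route as the paper: the same decomposition $v^\eps=v_K^\eps+v_L^\eps+v_R^\eps$, Cauchy--Schwarz for the bounded pieces, symmetrization plus the Lipschitz bound \eqref{eq:Diff_T_Lip} on $\Diff\mathcal{T}$ for $v_K^\eps$, and for $v_L^\eps$ the combination of the variance estimate of Corollary~\ref{lem:variance} with the orthogonality of $x^\perp$ to the radial direction. The only difference is cosmetic: you peel off $g(b^\eps)$ and use the pointwise bound $|\psi^\eps|\le C|\ln\eps|$ from Lemma~\ref{lem:encadrement_psi} on the Lipschitz remainder, whereas the paper subtracts the mean $\int\psi^\eps\omega^\eps$ first and identifies it with $-E^\eps(t)$; both yield the same $C|\ln\eps|I^\eps+C\sqrt{|\ln\eps|I^\eps}$ bound.
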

\begin{proof}
We compute from \eqref{def:I} and Lemma~\ref{lem:deriv}:
\begin{align*}
 \der{}{t} I^\eps(t) = \frac{2}{|\ln\eps|}\int (x-b^\eps(t))\cdot \big( v^\eps(x,t) + F^\eps(x,t) \big)\omega^\eps(x,t)\dd x.
\end{align*}
Recalling the decomposition \eqref{eq:decomp:v},
\begin{equation}\label{est11}
 \der{}{t} I^\eps(t) = \frac{2}{|\ln\eps|}\int (x-b^\eps(t))\cdot \big( v_K^\eps(x,t) + v_L^\eps(x,t) + v_R^\eps(x,t) + F^\eps(x,t) \big)\omega^\eps(x,t)\dd x.
\end{equation}
Similarly to what we did in the proof of Lemma~\ref{lem:b}, we use the boundedness of $v_R^\eps$ and $F^\eps$ to get, by the Cauchy-Schwarz inequality, that
\begin{equation}\label{est12}
\left|\int (x-b^\eps(t))\cdot\big( v_R^\eps(x,t) + F^\eps(x,t) \big)\omega^\eps(x,t)\dd x \right|
 \le C\Bigg(\int |x-b^\eps(t)|^2 \omega^\eps(x,t)\dd x\Bigg)^{1/2}=C\sqrt{I^\eps(t)}. 
\end{equation}
Next, we compute
\begin{multline*}
 \int (x-b^\eps(t))\cdot v_K^\eps(x,t)\omega^\eps(x,t)\dd x
 \\ =- \iint (x-b^\eps(t))^\perp\cdot H(x,y) \Diff \mathcal{T}(x) \frac{\mathcal{T}(x)-\mathcal{T}(y)}{\big|\mathcal{T}(x)-\mathcal{T}(y) \big|^2}\omega^\eps(y,t)\omega^\eps(x,t) \dd x \dd y.
\end{multline*}
We temporarily denote $z = \mathcal{T}(x)-\mathcal{T}(y)$ and symmetrize the expression to get that
\begin{align*}
 & \left|\int (x-b^\eps(t))\cdot v_K^\eps(x,t)\omega^\eps(x,t)\dd x\right|
 \\ & = \frac{1}{2}\left|\iint H(x,y) \left[(x-b^\eps(t))^\perp\cdot \left(\Diff \mathcal{T}(x)\frac{z}{|z|^2}\right)- (y-b^\eps(t))^\perp\cdot\left( \Diff \mathcal{T}(y)\frac{z}{|z|^2}\right)\right]\omega^\eps(y,t)\omega^\eps(x,t) \dd x \dd y \right| \\
 & = \frac{1}{2}\Bigg|\iint\frac{H(x,y)}{|z|^2} \Big((x-b^\eps(t))^\perp\cdot \left(\Diff \mathcal{T}(x)z\right)-(x-b^\eps(t))^\perp\cdot \left(\Diff \mathcal{T}(y)z \right)\\
 & \hspace{5cm} + (x-b^\eps(t))^\perp\cdot \Diff \mathcal{T}(y)z - (y-b^\eps(t))^\perp\cdot \Diff \mathcal{T}(y)z\Big)\omega^\eps(y,t)\omega^\eps(x,t) \dd x \dd y \Bigg| \\
 & \le C\iint |H(x,y)||x-b^\eps(t)| \omega^\eps(y,t)\omega^\eps(x,t) \dd x \dd y +\frac{1}{2}\iint |H(x,y)| \frac{|x-y|}{|z|^2}|\Diff \mathcal{T}(y)z|\omega^\eps(y,t)\omega^\eps(x,t) \dd x \dd y
\end{align*}
and obtain by the Cauchy-Scwharz inequality and since $H$ is bounded that
\begin{equation}\label{est13}
 \left|\int (x-b^\eps(t))\cdot v_K^\eps(x,t)\omega^\eps(x,t)\dd x\right| \le C\sqrt{I^\eps(t)} + \mathcal{O}\left( 1\right).
\end{equation}

We now compute
\begin{align*}
 \int (x-b^\eps(t))&\cdot v_L^\eps(x,t) \omega^\eps(x,t)\dd x 
 = \int (x-b^\eps(t))\cdot\frac{x^\perp}{2|X|^2} \psi^\eps(x,t)\omega^\eps(x,t)\dd x \\
 & = \frac{1}{\gamma}\int (x-b^\eps(t))\cdot\frac{x^\perp}{2|X|^2} \left(\gamma\psi^\eps(x,t)-\int\psi^\eps(y,t)\omega^\eps(y,t)\dd y \right)\omega^\eps(x,t)\dd x \\
 & \qquad + \frac{1}{\gamma}\int (x-b^\eps(t))\cdot\frac{x^\perp}{2|X|^2} \int\psi^\eps(y,t)\omega^\eps(y,t)\dd y \omega^\eps(x,t)\dd x.
\end{align*}
Using the Cauchy-Schwarz inequality and Corollary~\ref{lem:variance}, we estimate the first term of the right hand side of the previous equality:
\begin{equation*}
 \left|\frac{1}{\gamma}\int (x-b^\eps(t))\cdot\frac{x^\perp}{2|X|^2} \left(\gamma\psi^\eps(x,t)-\int\psi^\eps(y,t)\omega^\eps(y,t)\dd y \right)\omega^\eps(x,t)\dd x \right|\le C \sqrt{|\ln\eps| I^\eps(t)}.
\end{equation*}
Using the relation with the energy Lemmas~\ref{lem:E-psiomega} and~\ref{lem:E(t)}, we estimate the second term as:
\[
\Bigg| \frac{1}{\gamma}\int (x-b^\eps(t))\cdot\frac{x^\perp}{2|X|^2} \int\psi^\eps(y,t)\omega^\eps(y,t)\dd y \omega^\eps(x,t)\dd x\Bigg| \leq C |\ln \eps| \Bigg|\int (x-b^\eps(t))\cdot\frac{x^\perp}{2|X|^2} \omega^\eps(x,t)\dd x \Bigg|+C\sqrt{I^\eps(t)}.
\]
Moreover,
\begin{multline}\label{est14-1}
 \int (x-b^\eps(t))\cdot\frac{x^\perp}{|X|^2} \omega^\eps(x,t)\dd x \\ = \int (x-b^\eps(t))\cdot\frac{x^\perp}{|B^\eps(t)|^2} \omega^\eps(x,t)\dd x + \int (x-b^\eps(t))\cdot x^\perp\left(\frac{1}{|X|^2} - \frac{1}{|B^\eps(t)|^2} \right) \omega^\eps(x,t)\dd x,
\end{multline}
and we then notice that
\begin{equation*}
 \int (x-b^\eps(t))\cdot\frac{x^\perp}{|B^\eps(t)|^2} \omega^\eps(x,t)\dd x = -\frac{b^\eps(t)}{|B^\eps(t)|^2}\cdot\int x^\perp\omega^\eps(x,t)\dd x = -\frac{b^\eps(t)}{|B^\eps(t)|^2}\cdot \gamma\big(b^\eps(t)\big)^\perp(t)=0.
\end{equation*}
We then use the fact that there exists a constant $C$ such that
\begin{equation*}
 \left|\frac{1}{|X|^2} - \frac{1}{|B^\eps(t)|^2} \right| \le C|x(t)-b^\eps(t)|
\end{equation*}
to conclude from \eqref{est14-1} that
\begin{equation*}
 \Bigg| \int (x-b^\eps(t))\cdot\frac{x^\perp}{|X|^2} \omega^\eps(x,t)\dd x \Bigg| \le C I^\eps(t).
\end{equation*}
We have proved that
\begin{equation}\label{est14}
 \left|\int (x-b^\eps(t))\cdot v_L^\eps(x,t)\omega^\eps(x,t)\dd x \right| \le C |\ln \eps| I^\eps(t) + C\sqrt{|\ln\eps|I^\eps(t)}.
\end{equation}
Gathering \eqref{est11}, \eqref{est12}, \eqref{est13} and \eqref{est14}, we conclude that there exists a constant $C$ independent of $\eps$ such that
\begin{equation*}
 \der{}{t} I^\eps(t) \le \frac{C}{|\ln\eps|}\left( \sqrt{I^\eps(t)} + 1 + |\ln \eps| I^\eps(t) + \sqrt{|\ln \eps| I^\eps(t)}\right),
\end{equation*}
which implies that
\begin{equation*}
 \der{}{t} I^\eps(t) \le C \left( I_\eps + \frac{1}{|\ln\eps|}\right).
\end{equation*}
\end{proof}

\subsection{Weak localization}

We are now in position to prove the first part of Theorem~\ref{theo:reduit}. First we get that the center of mass of the vorticity remains close to $z$.
\begin{lemme}\label{eq:I} For every $t \in [0,T_\eps]$, we have that
\begin{equation*}
 I^\eps(t) \le \frac{C}{|\ln\eps|},
\end{equation*}
and
\begin{equation*}
|b^\eps(t)-z(t)| \le \frac{C}{\sqrt{|\ln\eps|}}.
\end{equation*}
\end{lemme}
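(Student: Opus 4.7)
The plan is to apply Grönwall's inequality twice: first to the differential inequality of Lemma~\ref{lem:dI/dt} to obtain the bound on $I^\eps$, and second to the ODE identity of Lemma~\ref{lem:b} to deduce the bound on $|b^\eps - z|$.

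First I would exploit the initial concentration \eqref{hyp:omega0_reduced}. Since $\supp \omega_0^\eps \subset \overline{B(z_0,\eps)}$, the barycentre $b^\eps(0) = \gamma^{-1}\int x\,\omega_0^\eps(x)\dd x$ lies in $\overline{B(z_0,\eps)}$ as a convex combination of points of that ball, so every $x$ in the support satisfies $|x - b^\eps(0)| \le 2\eps$ and therefore $I^\eps(0) \le 4\gamma \eps^2$. Multiplying the inequality of Lemma~\ref{lem:dI/dt} by $e^{-Ct}$ and integrating on $[0,t]$ gives
\begin{equation*}
I^\eps(t) \le e^{Ct}\Big(I^\eps(0) + \frac{C t}{|\ln\eps|}\Big) \le \frac{C_T}{|\ln\eps|},\quad \forall t \in [0,T_\eps],
\end{equation*}
where in the last step I use that $4\gamma \eps^2 \le 1/|\ln\eps|$ as soon as $\eps$ is small enough, a condition depending only on $T$.

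For the second estimate, I would first compute the exact ODE satisfied by $z(t) = \tilde{R}_{t\nu}z_0$. Since the rotations $\tilde{R}_\theta$ commute with the $\perp$ operator, differentiation gives $\dd z/\dd t = \nu z(t)^\perp$. Moreover, the rotation preserves $|z(t)| = r_0$, hence $|Z(t)|^2 = r_0^2 + h^2$, and the definition of $\nu$ rewrites this as $\dd z/\dd t = -\gamma \frac{\sqrt{r_0^2+h^2}}{4\pi h}\,\frac{z(t)^\perp}{|Z(t)|^2}$, which is exactly the leading term in Lemma~\ref{lem:b}. Subtracting and using that $x \mapsto x^\perp/|X|^2$ is Lipschitz on the compact set $\overline{\mathcal{A}_{\eta_0}}$ (bounded and bounded away from $0$ since $r_0 > 0$ or else from $h > 0$), I obtain
\begin{equation*}
\Big|\frac{\dd}{\dd t}\big(b^\eps(t)-z(t)\big)\Big| \le C \,|b^\eps(t)-z(t)| + \mathcal{O}\big(|\ln\eps|^{-1/2}\big) + \mathcal{O}\big(\sqrt{I^\eps(t)}\big).
\end{equation*}
Plugging in the first step makes the last error term of order $|\ln\eps|^{-1/2}$, and since $b^\eps(0) \in \overline{B(z_0,\eps)}$ yields $|b^\eps(0) - z(0)| \le \eps$, a final Grönwall argument delivers $|b^\eps(t) - z(t)| \le e^{Ct}\big(\eps + C t/\sqrt{|\ln\eps|}\big) \le C_T/\sqrt{|\ln\eps|}$ for every $t \in [0,T_\eps]$.

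I do not anticipate a real obstacle, since the preceding two lemmas have done the heavy lifting. The only point that deserves a short check is that the trajectory $z(t)$ remains inside $\mathcal{A}_{\eta_0}$, so that the Lipschitz bound on $x \mapsto x^\perp/|X|^2$ may be invoked at the point $z(t)$; this is immediate because $|z(t)| = r_0$ is preserved by $\tilde{R}_{t\nu}$.
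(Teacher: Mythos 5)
Your proposal is correct and follows essentially the same route as the paper: a Grönwall argument applied to the differential inequality of Lemma~\ref{lem:dI/dt} with the initial bound $I^\eps(0)=\mathcal{O}(\eps^2)$, followed by the verification that $z(t)=\tilde R_{t\nu}z_0$ solves the limiting ODE $z'=-\gamma\frac{\sqrt{r_0^2+h^2}}{4\pi h}\frac{z^\perp}{|Z|^2}$ and a second Grönwall comparison (the paper invokes its Lemma~\ref{lem:gronwall} with $g\equiv C/\sqrt{|\ln\eps|}$ and $|b^\eps(0)-z_0|\le\eps$, which is exactly your subtraction-plus-Lipschitz step). No gaps.
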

\begin{proof}
From Lemma~\ref{lem:dI/dt}, by Gronwall's inequality, we have that for all $t \in [0,T_\eps]$,
\begin{equation*}
 I^\eps(t) \le \left(I^\eps(0)+\frac{Ct}{|\ln\eps|}\right) e^{Ct}
\end{equation*}
and thus there exists a constant $C$ depending on $T$ such that for every $\eps > 0$ and every $t \in [0,T_\eps]$,
\begin{equation*}
 I^\eps(t) \le \frac{C}{|\ln\eps|},
\end{equation*}
which proves the first estimate of Lemma~\ref{eq:I}.

We now plug this into the result of Lemma~\ref{lem:b} to obtain that
\begin{equation*}
 \der{}{t} b^\eps(t) =- \gamma\frac{\sqrt{r_0^2+h^2}}{4\pi h} \frac{b^\eps(t)^\perp}{|B^\eps(t)|^2} + \mathcal{O}\left(\frac{1}{\sqrt{|\ln\eps|}}\right).
\end{equation*}
We define
\begin{equation*}
 f(x) = -\gamma\frac{\sqrt{r_0^2+h^2}}{4\pi h}\frac{x^\perp}{|X|^2}.
\end{equation*}
We start by checking that the function $z(t)= \tilde{R}_{t \nu } z_{0}$ with
$ \nu = -\frac{\gamma}{4\pi h |Z_0|}$ is a solution of $z'(t) = f(z(t))$.
Indeed,
\begin{equation*}
 z'(t) = \nu z^\perp(t) =- \frac{\gamma}{4\pi h \sqrt{h^2+r_0^2}}z^\perp(t) = -\gamma\frac{\sqrt{r_0^2+h^2}}{4\pi h}\frac{z^\perp(t)}{|Z(t)|^2} = f(z(t)).
\end{equation*}
Since $f$ is a Lipschitz function, and $|b^\eps(0)-z_0| \le \eps$, we now use Gronwall's inequality, given in Lemma~\ref{lem:gronwall} with $g(t)=C/\sqrt{|\ln\eps|}$, to obtain that there exists $\kappa$ such that for every $t \in [0,T_\eps]$,
\begin{equation*}
 |b^\eps(t)-z(t)| \le \left( \frac{tC}{\sqrt{|\ln\eps|}} + \eps\right)e^{\kappa t}.
\end{equation*}
Therefore, there exists $C$ depending on $T$ but not on $\eps$ such that for every $\eps$ small enough and for every $t \in [0,T_\eps]$,
\begin{equation*}
 |b^\eps(t)-z(t)| \le \frac{C}{\sqrt{|\ln \eps|}}.
\end{equation*}
\end{proof}

Finally, we estimate the amount of vorticity outside a small disk centered at $z$.

\begin{prop} Let $r_\eps = \left(\frac{\ln |\ln\eps|}{|\ln \eps|}\right)^{1/2}.$ Then there exists $C$ such that for every $t \in [0,T_\eps]$,
\begin{equation*}
 \int_{\R^2 \setminus B(z(t),r_\eps)} \omega^\eps(x,t) \dd x \le \frac{C}{\ln |\ln \eps|}.
\end{equation*}
\end{prop}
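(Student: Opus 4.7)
The estimate follows directly from the two bounds established in Lemma~\ref{eq:I} via a Chebyshev-type argument. The plan is to transfer the control of $I^\eps(t)$ (which is the second moment around the center of mass $b^\eps(t)$) into control of the mass located far from $z(t)$, using the fact that $b^\eps(t)$ and $z(t)$ are much closer than the scale $r_\eps$.

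First, observe that $r_\eps=\sqrt{\ln|\ln\eps|/|\ln\eps|}$ satisfies $r_\eps\gg |\ln\eps|^{-1/2}$ as $\eps\to 0$, since $\ln|\ln\eps|\to+\infty$. Consequently, by the second estimate of Lemma~\ref{eq:I}, there exists $\eps_T>0$ such that for every $\eps\in(0,\eps_T]$ and every $t\in[0,T_\eps]$,
\begin{equation*}
|b^\eps(t)-z(t)|\leq \frac{C}{\sqrt{|\ln\eps|}}\leq \frac{r_\eps}{2}.
\end{equation*}
By the triangle inequality, this implies the inclusion
\begin{equation*}
 \R^2\setminus B(z(t),r_\eps)\;\subset\;\R^2\setminus B\!\left(b^\eps(t),\tfrac{r_\eps}{2}\right).
\end{equation*}

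Next, since $\omega^\eps(\cdot,t)\geq 0$, we apply Chebyshev's inequality with the nonnegative measure $\omega^\eps(x,t)\dd x$ and the function $x\mapsto|x-b^\eps(t)|^2$:
\begin{equation*}
 \int_{|x-b^\eps(t)|>r_\eps/2}\omega^\eps(x,t)\dd x\;\leq\;\frac{4}{r_\eps^2}\int|x-b^\eps(t)|^2\omega^\eps(x,t)\dd x\;=\;\frac{4\,I^\eps(t)}{r_\eps^2}.
\end{equation*}
Using the first estimate of Lemma~\ref{eq:I}, namely $I^\eps(t)\leq C/|\ln\eps|$, and the definition of $r_\eps$, we obtain
\begin{equation*}
 \int_{\R^2\setminus B(z(t),r_\eps)}\omega^\eps(x,t)\dd x\;\leq\;\frac{4\,I^\eps(t)}{r_\eps^2}\;\leq\;\frac{4C/|\ln\eps|}{\ln|\ln\eps|/|\ln\eps|}\;=\;\frac{C'}{\ln|\ln\eps|},
\end{equation*}
which is the desired bound. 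There is no substantial obstacle: once the two ingredients of Lemma~\ref{eq:I} are in hand, the argument is just a matched comparison between the scales $|\ln\eps|^{-1/2}$, $r_\eps$, and $I^\eps(t)^{1/2}$, followed by a one-line Chebyshev estimate.
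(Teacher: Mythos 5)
Your proof is correct and takes essentially the same route as the paper: a Chebyshev argument combining the two estimates of Lemma~\ref{eq:I}, the only cosmetic difference being that you recenter the ball at $b^\eps(t)$ via the inclusion $\R^2\setminus B(z(t),r_\eps)\subset\R^2\setminus B(b^\eps(t),r_\eps/2)$ (valid for $\eps$ small), whereas the paper keeps $z(t)$ as the center and splits $|x-z(t)|^2\le 2\big(|x-b^\eps(t)|^2+|b^\eps(t)-z(t)|^2\big)$ inside the integral.
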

\begin{proof}
We observe that
\begin{align*}
 \int_{\cal U \setminus B(z(t),r_\eps)} \omega^\eps(x,t) \dd x & \leq \int_{\cal U \setminus B(z(t),r_\eps)} \frac{|x-z(t)|^2}{r_\eps^2}\omega^\eps(x,t) \dd x \\
 & \le \frac2{r_\eps^2}\int_{\cal U \setminus B(z(t),r_\eps)}( |x-b^\eps(t)|^2+|b^\eps(t)-z(t)|^2) \omega^\eps(x,t) \dd x \\
 & \le \frac{C}{r_\eps^2 |\ln \eps|},
\end{align*}
which ends the proof.
\end{proof}

These two lemmas allow us to conclude the part $(i)$ of Theorem~\ref{theo:reduit}. To manage to prove that $T_\eps=T$, namely that the vorticity remains supported in annulus, we prove in the next section the strong localization in the radial direction.

\subsection{Strong localization}

For $t \in [0,T_\eps]$, let us define the mass of vorticity outside the annulus of thickness $\eta$ (see \eqref{def:Aeta}) by
\begin{equation*}
 m_t(\eta) := \int_{\cal A_\eta} \omega^\eps(y,t) \dd y 
\end{equation*}
and let $s \mapsto x_t(s)$ be the Lagrangian trajectory passing by $x_0$ at time $t$, i.e. the solution of
\begin{equation*}
 \left\{\begin{aligned}
& \der{}{s} x_t(s) =\frac1{|\ln\eps|}\Big( v^\eps(x_t(s),s) + F^\eps(x_t(s),s)\Big), \\
& x_t(t) = x_0,
\end{aligned}\right.
\end{equation*}
with $x_0$ being such that 
\begin{equation*}
 \Big||x_0| - r_0\Big| = R_t := \max \Big\{ \Big||x|-r_0\Big| \, , \, x \in \supp \omega^\eps(t)\Big\}.
\end{equation*}

\begin{lemme}\label{lemma:Rt}
\begin{equation*}
 \der{}{s} \Big| |x_t(s)| - r_0\Big|(t) \le \frac{C}{|\ln\eps|} \left( 1 + \frac{1}{R_t} + \frac{\sqrt{m_t(R_t/2)}}{\eps}\right). 
\end{equation*}
\end{lemme}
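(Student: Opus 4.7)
The plan is to differentiate $\bigl||x_t(s)|-r_0\bigr|$ at the anchor time $s=t$, exploit the structural decomposition of the velocity given by Proposition~\ref{prop:decomp_vitesse}, and reduce everything to a pointwise estimate of the singular part $v_K^\eps(x_0,t)$. Since $R_t>0$ implies $|x_0|\neq r_0$, the absolute value is differentiable at $s=t$ and the ODE for $x_t$ gives
\begin{equation*}
 \der{}{s}\bigl||x_t(s)|-r_0\bigr|\Big|_{s=t}
 \;\le\; \frac{1}{|\ln\eps|}\left|\frac{x_0}{|x_0|}\cdot\bigl(v^\eps(x_0,t)+F^\eps(x_0,t)\bigr)\right|,
\end{equation*}
so it suffices to bound this scalar product.

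Writing $v^\eps = v_K^\eps + v_L^\eps + v_R^\eps$, the contribution of $v_L^\eps$ vanishes identically since by \eqref{eq:v_L-psi} the vector $v_L^\eps(x_0,t)$ is a scalar multiple of $x_0^\perp$, hence is orthogonal to $x_0$. The terms $v_R^\eps$ and $F^\eps$ are uniformly bounded on $\cal A_{\eta_0}$ by Proposition~\ref{prop:S_K_borné_Lip} and \eqref{hyp:F_reduced} respectively, and together produce the $\mathcal{O}(1)$ contribution to the bound.

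It remains to estimate $v_K^\eps(x_0,t)$. Recalling \eqref{def:v_K} and using that $H$ and $\Diff \mathcal{T}$ are bounded on the compact set $\cal A_{\eta_0}$, together with the lower bound $|\mathcal{T}(x)-\mathcal{T}(y)|\ge c|x-y|$ furnished by \eqref{eq:Diff_T_Lip}, one obtains the pointwise estimate
\begin{equation*}
 |v_K^\eps(x_0,t)| \;\le\; C\int \frac{\omega^\eps(y,t)}{|x_0-y|}\,\dd y.
\end{equation*}
I split this integral at the radius $|y-x_0|=R_t/2$. On the far region $|y-x_0|>R_t/2$ the integrand is at most $2C/R_t$, and integration against $\omega^\eps(\cdot,t)$ (whose $L^1$-norm equals $\gamma$) yields a contribution of order $1/R_t$. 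For the near region, the key geometric observation is that whenever $\bigl||x_0|-r_0\bigr|=R_t$ and $|y-x_0|\le R_t/2$, the reverse triangle inequality forces $\bigl||y|-r_0\bigr|\ge R_t/2$, so the vorticity mass in the near region is bounded by $m_t(R_t/2)$. A standard threshold-balancing argument then gives, for any auxiliary $r>0$,
\begin{equation*}
 \int_{|y-x_0|\le R_t/2}\frac{\omega^\eps(y,t)}{|y-x_0|}\,\dd y \;\le\; 2\pi r\,\|\omega^\eps\|_{L^\infty} + \frac{m_t(R_t/2)}{r},
\end{equation*}
and optimizing with $r=\eps\sqrt{m_t(R_t/2)/(2\pi M_0)}$, combined with $\|\omega^\eps(\cdot,t)\|_{L^\infty}\le M_0/\eps^2$, produces the term $C\sqrt{m_t(R_t/2)}/\eps$. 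Summing the three contributions completes the proof.

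The only non-routine step is the geometric identification of the near region $B(x_0,R_t/2)\cap\supp\omega^\eps(\cdot,t)$ as a subset of $\cal A_{R_t/2}^c$; this is what makes the relatively small mass $m_t(R_t/2)$ appear in the final bound rather than the total mass $\gamma$, and is precisely the feature that will later allow a bootstrap into the strong localization result.
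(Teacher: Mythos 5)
Your proof is correct and follows essentially the same route as the paper's: kill the $v_L^\eps$ contribution by orthogonality to $x_0$, absorb $v_R^\eps+F^\eps$ into the $\mathcal{O}(1)$ term, reduce $v_K^\eps(x_0,t)$ to $C\int \omega^\eps(y,t)|x_0-y|^{-1}\,\dd y$ via the bi-Lipschitz bounds on $\mathcal{T}$, and split that integral using the reverse triangle inequality together with the $L^1$--$L^\infty$ interpolation bound. The only immaterial differences are that you split at the sphere $|y-x_0|=R_t/2$ whereas the paper splits along the annulus $\mathcal{A}_{R_t/2}$ (the two splits encode the same geometric fact), and that you re-derive the interpolation inequality by threshold balancing rather than invoking it directly.
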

\begin{proof}
We use that $x\cdot v_L(x,t)\equiv 0$ to write
\begin{align*}
 \der{}{s} \Big| |x_t(s)| - r_0\Big|(t) & = \frac{x_t(t)}{|x_t(t)|}\cdot \left[\frac{1}{|\ln\eps|} (v^\eps+F^\eps)(x_t(t),t)\right] \frac{|x_t(t)| - r_0}{\big||x_t(t)| - r_0\big|}\\
 & \le \frac{1}{|\ln\eps|} \Big| v_K ^\eps(x_0,t) + v_R^\eps(x_0,t)+F^\eps(x_0,t)\Big|\\
 &\le \frac{C}{\ln\eps} \left( 1 + \int \frac{\omega^\eps(y,t)}{\big|\mathcal{T}(x_0)-\mathcal{T}(y)\big|} \dd y\right) \\
 &\le \frac{C}{\ln\eps} \left( 1 + \int \frac{\omega^\eps(y,t)}{|x_0-y|}\dd y\right). \\
\end{align*}
We now split the integral on $\mathcal{A}_{R_t/2}$ and $\cal U \setminus \mathcal{A}_{R_t/2}.$ If $y \in \mathcal{A}_{R_t/2}$, then $|x_0-y| \ge R_t/2$, and thus
\begin{equation*}
 \int_{\mathcal{A}_{R_t/2}} \frac{\omega^\eps(y,s)}{|x_0-y|}\dd y \le \frac{C}{R_t}.
\end{equation*}
On the complement, we use the following property: for every $\Omega \subset \R^2$
\begin{equation*}
 \int_\Omega \frac{\omega^\eps(y,s)}{|x_0-y|}\dd y \le C \| \omega^\eps(s) \|_{L^1(\Omega)}^{\frac{1}{2}} \| \omega^\eps(s)\|_{L^\infty(\Omega)}^{\frac{1}{2}},
\end{equation*}
where $C$ is independent of $\Omega$, to obtain that
\begin{equation*}
 \int_{\cal U\setminus \mathcal{A}_{R_t/2}} \frac{\omega^\eps(y,s)}{|x_0-y|}\dd y \le \frac{C}{\eps}\sqrt{m_t(R_t/2)}.
\end{equation*}
\end{proof}

\begin{lemme}\label{lemma-step}
For any $\ell > 0$ and $\kappa \in (0,\frac{1}{4})$,
\begin{equation*}
 \lim_{\eps \to 0} \eps^{-\ell} m_t\left(\frac{1}{|\ln\eps|^\kappa}\right) = 0.
\end{equation*}
\end{lemme}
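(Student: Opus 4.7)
The plan is a Marchioro-style iterative bootstrap on a geometric sequence of radial scales, following ideas developed in \cite{Mar3, Hientzsch_Lacave_Miot_2022_Dynamics_of_PV_for_the_lake_eq}. Fix $\ell > 0$, set $\eta^* := |\ln\eps|^{-\kappa}$, and introduce a dyadic sequence $\eta_k := 2^k \eps$ for $k = 0, 1, \ldots, N$, with $N \simeq |\ln\eps|/\ln 2$ chosen so that $\eta_N \ge \eta^*$. For each $k \ge 1$, take a radial cutoff $W_k \in C^\infty(\R^2;[0,1])$ depending only on $\big||x|-r_0\big|$, with $W_k \equiv 0$ on $\mathcal{A}_{\eta_{k-1}}$, $W_k \equiv 1$ outside $\mathcal{A}_{\eta_k}$, and $|\nabla W_k| \le C/\eta_{k-1}$. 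Define $\mu_k(t) := \int W_k(x)\, \omega^\eps(x,t)\, \dd x$, which satisfies $m_t(\eta_k) \le \mu_k(t) \le m_t(\eta_{k-1})$ (reading $m_t(\eta)$ as the mass outside $\mathcal{A}_\eta$, in accordance with the text following \eqref{def:Aeta}). By \eqref{hyp:omega0_reduced}, $\supp \omega^\eps_0 \subset B(z_0,\eps) \subset \overline{\mathcal{A}_\eps} \subset \mathcal{A}_{\eta_0}$, so $\mu_k(0) = 0$ for every $k \ge 1$.

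Using the transport equation in \eqref{eq:Helico2D_rescaled_F} together with $\div(v^\eps + F^\eps) = 0$, I would differentiate to obtain
\begin{equation*}
 \mu_k'(t) = \frac{1}{|\ln\eps|}\int \nabla W_k(x) \cdot (v^\eps + F^\eps)(x,t)\, \omega^\eps(x,t)\, \dd x,
\end{equation*}
whose integrand is supported in the strip $\mathcal{A}_{\eta_k} \setminus \mathcal{A}_{\eta_{k-1}}$. To bound $|v^\eps + F^\eps|$ on this strip, I use the decomposition \eqref{eq:decomp:v}: $v_R^\eps$ and $F^\eps$ are uniformly bounded by Proposition~\ref{prop:S_K_borné_Lip} and \eqref{hyp:F_reduced}, $|v_L^\eps| \le C|\ln\eps|$ by Lemma~\ref{lem:encadrement_psi}, and the singular $v_K^\eps$ is split according to whether $y$ lies in $\mathcal{A}_{\eta_{k-1}/2} = \mathcal{A}_{\eta_{k-2}}$ or not. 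The near region, where $|x-y| \ge \eta_{k-1}/2$ for $x$ in the strip, contributes $\le C/\eta_{k-1}$; the far region is treated by the $L^1$-$L^\infty$ interpolation $\int \omega/|x-y| \le C\|\omega\|_{L^1}^{1/2}\|\omega\|_\infty^{1/2}$ combined with $\|\omega^\eps\|_\infty \le M_0 \eps^{-2}$, giving contribution $\le C\sqrt{\mu_{k-2}(t)}/\eps$. Since the $\omega^\eps$-mass of the strip is bounded by $\mu_{k-1}(t)$, this produces the recursive differential inequality
\begin{equation*}
 \mu_k'(t) \le \frac{C\, \mu_{k-1}(t)}{|\ln\eps|\, \eta_{k-1}}\left(|\ln\eps| + \frac{1}{\eta_{k-1}} + \frac{\sqrt{\mu_{k-2}(t)}}{\eps}\right),
\end{equation*}
with the convention $\mu_j \equiv \gamma$ for $j \le 0$.

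Starting from $\mu_k(0) = 0$ and integrating in time, an induction on $k$ produces bounds $\mu_k(t) \le C_k\, \eps^{\alpha_k}$ whose exponents $\alpha_k$ improve at each stage through the feedback of $\sqrt{\mu_{k-2}(t)}/\eps$ into the recurrence (the first few steps use only the trivial control $\mu_j \le \gamma$, after which the improvement becomes self-reinforcing). After the full chain of $N \simeq |\ln\eps|$ iterations, one reaches a super-polynomial bound $\mu_N(t) \le \eps^{L}$ for any prescribed $L > \ell$; since $m_t(\eta^*) \le \mu_N(t)$, this gives $\eps^{-\ell} m_t(\eta^*) \to 0$ as $\eps \to 0$.

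The main obstacle is the quantitative bookkeeping of this cascade: at each iteration one gains a power of $\eps$ but loses polynomial factors in $|\ln\eps|$ coming from the cutoff gradient $1/\eta_{k-1}$ and from the $1/\eps$ multiplier of $\sqrt{\mu_{k-2}(t)}$. The threshold $\kappa < 1/4$ arises precisely from balancing these gains and losses across the roughly $|\ln\eps|$ iterations needed to reach scale $|\ln\eps|^{-\kappa}$: the final scale $\eta_N$ must remain large enough compared to $\eps$ for the accumulated geometric losses to stay subdominant to the super-polynomial gain provided by the initial cancellation $\mu_k(0) = 0$.
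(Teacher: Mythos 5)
Your overall strategy -- mollified masses $\mu$ of the vorticity outside nested annuli, a differential inequality propagating $\mu(0)=0$, and an iteration closed by a factorial gain -- is the right family of ideas, but the specific scheme you set up does not close, for three concrete reasons. First, your chain of scales $\eta_k=2^k\eps$ starts at $\eps$, so the coefficient of your recursive inequality at step $k$ is at least $C/\eta_{k-1}\gtrsim C\,2^{-k}/\eps$ (and $C/(|\ln\eps|\eta_{k-1}^2)\gtrsim \eps^{-2}$ at the first steps). Integrating in time, the very first iterations already produce bounds far worse than the trivial bound $\mu_k\le\gamma$, and over the $N\simeq|\ln\eps|$ steps the product of the coefficients grows like $e^{c|\ln\eps|^2}$, which overwhelms both the $N!$ and the dyadic gains; the cascade therefore yields nothing. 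This is not a bookkeeping issue but a structural one: the vorticity genuinely does leave the $\eps$-neighbourhood of $z(t)$ (only the much weaker scale $r_\eps$ is controlled in the tangential direction), so no transport estimate can propagate smallness from scale $\eps$ upward. In the paper (following \cite{Mar3}), all radii in the iteration stay of order $R=|\ln\eps|^{-\kappa}$, the step is $\eta\sim R/n$ with $n$ a \emph{free} parameter of order $\ell|\ln\eps|/\ln|\ln\eps|$, the coefficient $A_\eps(R,\eta)$ is then uniformly $o(n)$, and the conclusion comes from $(A_\eps T)^n/n!\le(A_\eps Te/n)^n\le\eps^\ell$.

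Second, you estimate $|v_L^\eps|\le C|\ln\eps|$ pointwise and carry the resulting $|\ln\eps|$ into the inequality. This throws away the key cancellation: since your cutoff is radial, $\nabla W_k$ is parallel to $x$, while $v_L^\eps\propto x^\perp$, so $\nabla W_k\cdot v_L^\eps\equiv 0$ and this term contributes nothing. Keeping it makes the inequality unusable even at order-one scales. Third, your treatment of $v_K^\eps$ by a pointwise near/far splitting with the $\|\omega\|_{L^1}^{1/2}\|\omega\|_{L^\infty}^{1/2}$ interpolation (producing $\sqrt{\mu_{k-2}}/\eps$) is the technique the paper uses for the \emph{Lagrangian} estimate of Lemma~\ref{lemma:Rt}, not for the mass flux. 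For the mass flux one must symmetrize the double integral in $(x,y)$ using the antisymmetry of the kernel, pay the second derivative $|W''|\le C/\eta^2$ of the cutoff to absorb the $1/|x-y|$ singularity, and then invoke the conservation of the radial moments $J_k^\eps$ (Lemma~\ref{lem:J}) to bound $\int\big||x|-r_0\big|^2\omega^\eps\,\dd x=\mathcal{O}(1/|\ln\eps|)$. That moment estimate is the actual source of the third term $1/(\eta^2R^2|\ln\eps|^2)$ in $A_\eps$ and hence of the threshold $\kappa<1/4$ (via $2-4\kappa>1$); your heuristic attributing $\kappa<1/4$ to a balance of losses across $|\ln\eps|$ dyadic iterations does not reflect the mechanism, and your proof never uses Lemma~\ref{lem:J} at all.
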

\begin{proof}
We use the same plan as in the proof of Lemma 7.3 in \cite{Hientzsch_Lacave_Miot_2022_Dynamics_of_PV_for_the_lake_eq}. Let
\begin{equation*}
 \mu_t(R,\eta) := \int \Big( 1- W_{R,\eta}\big(|y|-r_0\big) \Big) \omega^\eps(y,t)\dd t
\end{equation*}
where $ W_{R,\eta}$ is a smooth non-negative function from $\R$ to $\R$ such that
\begin{equation*}
 W_{R,\eta}(s) =\begin{cases} 1 & \text{if } |s|\le R \\ 0 & \text{if } |s| \ge R+\eta\end{cases}
\end{equation*}
satisfying $|W_{R,\eta}'|\le C /\eta$ and $|W_{R,\eta}''|\le C /\eta^2$. In particular, we have that
\begin{equation*}
 \mu_t(R,\eta) \le m_t(R) \le \mu_t(R-\eta,\eta).
\end{equation*}
We now compute
\begin{align*}
 \der{}{t} \mu_t(R,\eta) & = -\frac{1}{|\ln\eps|} \int \nabla \Big( W_{R,\eta} \big(|y| - r_0\big) \Big) \cdot (v^\eps + F^\eps)(y,t) \omega^\eps(y,t)\dd y \\
 & = -\frac{1}{|\ln\eps|}\int W_{R,\eta}' \big(|y|-r_0\big)\frac{y}{|y|}\cdot (v^\eps + F^\eps)(y,t)\omega^\eps(y,t)\dd y.
\end{align*}
Noticing from \eqref{def:v_L} that $v_L^\eps(y,t) \cdot y = 0$, and recalling the decomposition \eqref{eq:decomp:v}, we have that
\begin{align*}
 \der{}{t} \mu_t(R,\eta) & = -\frac{1}{|\ln\eps|} \int W_{R,\eta}' \big(|y|-r_0\big)\frac{y}{|y|}\cdot (v_R^\eps + F^\eps)(y,t) \omega^\eps(y,t)\dd y \\ & \qquad -\frac{1}{|\ln\eps|} \int W_{R,\eta}' \big(|y|-r_0\big)\frac{y^\perp}{|y|}\cdot v_K^\eps(y,t)^\perp\omega^\eps(y,t) \dd y \\ & := A_1 + A_2.
\end{align*}
Recalling that $v_R^\eps$ and $F^\eps$ are bounded, that $|W'_{R,\eta}|\leq C/\eta$ and that $W'(s) = 0$ outside of the annulus $R \le |s| \le R+\eta $, we have that
\begin{equation*}
 |A_1| \le \frac{C}{\eta|\ln\eps|} \int_{R \le \big||y|-r_0 \big|\le R+\eta} \omega^\eps(y,t)\dd y \le \frac{C m_t(R)}{\eta|\ln\eps|}.
\end{equation*}
In order to deal with $A_2$, recalling the definition of $v_K^\eps$ \eqref{def:v_K}, we first notice by symmetrization that
\begin{equation*}
 A_2 = \frac{1}{4\pi|\ln\eps|} \iint f(x,y,t)\dd x \dd y
\end{equation*}
by letting
\begin{equation*}
 f(x,y,t) = H(x,y) \left( W_{R,\eta}' \big(|y|-r_0\big)\chi(y) - W_{R,\eta}' \big(|x|-r_0\big)\chi(x) \right) \cdot \frac{\mathcal{T}(y)-\mathcal{T}(x)}{\big|\mathcal{T}(y)-\mathcal{T}(x) \big|^2} \omega^\eps(y,t)\omega^\eps(x,t)
\end{equation*}
and
\begin{equation*}
 \chi(y) = \frac{\Diff \mathcal{T}(y) y^\perp}{|y|},
\end{equation*}
since $\Diff \mathcal{T}$ is a symmetric matrix. We observe that $f(x,y,t) = 0$ for every $(x,y) \in (\mathcal{A}_{R})^2.$ Then, for any $\alpha >0$ such that $R > 2 \eta^\alpha$, we have that
\begin{multline*}
 \iint f(x,y,t)\dd x \dd y = 2\iint_{\mathcal{A}_R^c \times \mathcal{A}_{R-\eta^\alpha}} f(x,y,t)\dd x \dd y + 2\iint_{\mathcal{A}_R^c \times \mathcal{A}_{R-\eta^\alpha}^c} f(x,y,t)\dd x \dd y \\ - \iint_{\mathcal{A}_R^c \times \mathcal{A}_{R}^c} f(x,y,t)\dd x \dd y.
\end{multline*}

When $(x,y) \in \mathcal{A}_R^c \times \mathcal{A}_{R-\eta^\alpha}$, then $|x-y| \ge \eta^\alpha$. In particular,
\begin{equation*}
 \frac{|\mathcal{T}(y)-\mathcal{T}(x)|}{\big|\mathcal{T}(y)-\mathcal{T}(x) \big|^2} \le \frac{C}{\eta^\alpha}.
\end{equation*}
Using in addition that $|W'_{R,\eta}|\le C/\eta$ and that $H$ and $\chi$ are bounded, we have that
\begin{equation*}
 \left|\iint_{\mathcal{A}_R^c \times \mathcal{A}_{R-\eta^\alpha}} f(x,y,t)\dd x \dd y\right| \le \frac{C}{\eta^{1+\alpha}} \iint_{\mathcal{A}_R^c \times \mathcal{A}_{R-\eta^\alpha}}\omega^\eps(y,t)\omega^\eps(x,t)\dd x \dd y \le C\frac{m_t(R)}{\eta^{1+\alpha}}.
\end{equation*}
For the other two terms, we use the fact that
\begin{equation*}
 \big| W_{R,\eta}' \big(|y|-r_0\big)\chi(y) - W_{R,\eta}' \big(|x|-r_0\big)\chi(x) \big| \le C\left(\frac{1}{\eta^2} + \frac{1}{\eta}\right)|x-y|
\end{equation*}
since $W_{R,\eta}'' \le C/\eta^2$, $W_{R,\eta}' \le C/\eta$ and $\chi'$ is bounded. Therefore, assuming that $\eta\le 1$ and $R-\eta^\alpha > R/2$, we have
\begin{align*}
 \left|2\iint_{\mathcal{A}_R^c \times \mathcal{A}_{R-\eta^\alpha}^c} f(x,y,t)\dd x \dd y\right|+ \left|\iint_{\mathcal{A}_R^c \times \mathcal{A}_{R}^c} f(x,y,t)\dd x \dd y\right| & \le \frac{C}{\eta^2} \iint_{\mathcal{A}_R^c \times \mathcal{A}_{R/2}^c}\omega^\eps(y,t)\omega^\eps(x,t)\dd x \dd y \\
 & \le \frac{Cm_{t}(R)}{\eta^2 R^2}\int_{\mathcal{A}_{R/2}^c}\big||x|-r_0\big|^2\omega^\eps(x,t)\dd x.
\end{align*}
We now recall the definition of $J_k^\eps$ given at \eqref{def:J}. If $r_0 \neq 0$ then using Lemma~\ref{lem:J} with $k=1$ and $k=2$, we have that
\begin{align*}
 \int_{\mathcal{A}_{R/2}^c}\big||x|-r_0\big|^2\omega^\eps(x,t)\dd x & \le \int_{\cal U}\big||x|-r_0\big|^2\omega^\eps(x,t)\dd x \\
 & = J_2^\eps(t) - 2 r_0 J_1^\eps(t) + r_0^2 \gamma \\
 & = \mathcal{O}\left(\frac{1}{|\ln\eps|}\right).
\end{align*}
If $r_0 = 0$, we simply use Lemma~\ref{lem:J} with $k=2$ to obtain that
\begin{align*}
\int_{\mathcal{A}_{R/2}^c}\big||x|-r_0\big|^2\omega^\eps(x,t)\dd x \le \int_{\cal U}|x|^2\omega^\eps(x,t)\dd x = J_2^\eps(t) = \mathcal{O}\left(\frac{1}{|\ln\eps|}\right).
\end{align*}
In the end we have obtained that
\begin{equation}\label{Mar3:eq1}
 \der{}{t}\mu_t(R,\eta) \le A_\eps(R,\eta) m_t(R)
\end{equation}
with
\begin{equation}\label{Mar3:eq2}
 A_\eps(R,\eta) = C \left( \frac{1}{\eta |\ln\eps|} + \frac{1}{\eta^{\alpha+1}|\ln\eps|}+\frac{1}{\eta^2R^2|\ln\eps|^2}\right).
\end{equation}
We note that the expression of $A_{\varepsilon}(R,\eta)$ coincides with the one obtained in \cite[Equation (3.44)]{Mar3} and \cite[Equation (7.3)]{Hientzsch_Lacave_Miot_2022_Dynamics_of_PV_for_the_lake_eq}. It hence suffices to reproduce verbatim the remaining part of the proof of \cite[Lemma 3.4]{Mar3} in order to complete the proof of Lemma~\ref{lemma-step}. Indeed, the rest of the proof of \cite[Lemma 3.4]{Mar3} is an iterative argument only based on \eqref{Mar3:eq1}-\eqref{Mar3:eq2}.
\end{proof}

We finish this section with the strong localization property, namely {\it(ii)} of Theorem~\ref{theo:reduit}.

\begin{prop}\label{prop:strong-loc}
For every $\kappa < \frac{1}{4}$, there exists constants $C_{\kappa,T}$ and $\eps_{\kappa,T}>0$, such that for every $\eps \in (0,\eps_{\kappa,T}]$ and for every $t \in [0,T_\eps]$,
 \begin{equation*}
 \supp \omega^\eps(\cdot,t) \subset \left\{ x \in \R^2 \, , \, \Big||x| - |z|\Big| \le \frac{C_{\kappa,T}}{|\ln\eps|^\kappa} \right\}.
 \end{equation*}
\end{prop}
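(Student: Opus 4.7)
The plan is a bootstrap/continuity argument on $R_t$ combining the derivative estimate of Lemma~\ref{lemma:Rt} with the super-polynomial decay of $m_t$ supplied by Lemma~\ref{lemma-step}. Since $z(t)=\tilde R_{t\nu}z_0$ is a rotation of $z_0$, we have $|z(t)|=|z_0|=r_0$, so the conclusion is equivalent to showing $R_t \le C_{\kappa,T}/|\ln\eps|^{\kappa}$ on $[0,T_\eps]$.

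Fix $\kappa \in (0,1/4)$, choose any $\kappa' \in (\kappa,1/4)$, and set $\rho_\eps := C_{\kappa,T}/|\ln\eps|^{\kappa}$, with $C_{\kappa,T}$ to be fixed at the end. Argue by contradiction: suppose there exists $t_* \in [0,T_\eps]$ with $R_{t_*} > \rho_\eps$. Since $R_0 \le \eps$ by \eqref{hyp:omega0_reduced}, and $t \mapsto R_t$ is continuous (the support of $\omega^\eps(\cdot,t)$ is transported by the continuous log-Lipschitz Lagrangian flow $X_\eps$), one can extract an interval $[t_1,t_2] \subset [0,T_\eps]$ with $R_{t_1}=\rho_\eps/2$, $R_{t_2}=\rho_\eps$, and $R_t \ge \rho_\eps/2$ for all $t \in [t_1,t_2]$.

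On this interval, $R_t/2 \ge \rho_\eps/4 \ge 1/|\ln\eps|^{\kappa'}$ for $\eps$ small enough. Since $\eta \mapsto m_t(\eta)$ is non-increasing (it counts the mass at distance at least $\eta$ from the circle of radius $r_0$), Lemma~\ref{lemma-step} applied at the scale $1/|\ln\eps|^{\kappa'}$ gives $m_t(R_t/2) \le m_t(1/|\ln\eps|^{\kappa'}) = o(\eps^{4})$, hence $\sqrt{m_t(R_t/2)}/\eps = o(1)$. Substituting in Lemma~\ref{lemma:Rt} and using that $1/R_t \ge 1/\rho_\eps \to \infty$ dominates the $O(1)$ and $o(1)$ contributions, we get
\[
R_t' \le \frac{C}{|\ln\eps|\,R_t}.
\]
Multiplying by $R_t$ and integrating on $[t_1,t_2]$ yields $R_{t_2}^2 - R_{t_1}^2 \le 2CT/|\ln\eps|$, that is $(3/4)\rho_\eps^2 \le 2CT/|\ln\eps|$. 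Since $\rho_\eps^2 = C_{\kappa,T}^2/|\ln\eps|^{2\kappa}$ and $2\kappa < 1$, this is a contradiction as soon as $C_{\kappa,T}$ is fixed large enough (say $C_{\kappa,T}^2 > 8CT/3$) and $\eps$ is small enough.

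The main obstacle is calibrating the decay provided by Lemma~\ref{lemma-step} against the $1/\eps$ prefactor in Lemma~\ref{lemma:Rt}: the term $\sqrt{m_t(R_t/2)}/\eps$ must be made negligible, which requires $R_t/2$ to exceed the threshold $1/|\ln\eps|^{\kappa'}$ at which Lemma~\ref{lemma-step} is valid, and this constraint $\kappa' < 1/4$ propagates to the statement $\kappa < 1/4$. Note that once that step is secured, the integration of $R_t R_t' \lesssim 1/|\ln\eps|$ would formally produce the stronger rate $R_t \lesssim 1/\sqrt{|\ln\eps|}$, but one cannot close the bootstrap at this rate because it would force applying Lemma~\ref{lemma-step} at the level $1/\sqrt{|\ln\eps|}$, which is outside its range of validity.
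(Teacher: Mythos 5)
Your proof is correct and follows essentially the same route as the paper, which for this proposition simply invokes the continuity argument of Butt\`a--Cavallaro--Marchioro based precisely on the two ingredients you use: the differential inequality of Lemma~\ref{lemma:Rt} and the super-polynomial smallness of $m_t(1/|\ln\eps|^{\kappa'})$ from Lemma~\ref{lemma-step}. In fact you have written out the crossing-interval bootstrap that the paper leaves to the reference (including the correct calibration of $\kappa'\in(\kappa,1/4)$ against the $\eps^{-1}\sqrt{m_t}$ term), and your closing remark on why the rate cannot be pushed to $|\ln\eps|^{-1/2}$ is accurate.
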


Note that for the axisymmetric 3D Euler equations without swirl, the identical localization property is shown in \cite[Equation (3.8)]{Mar3}. Once Lemma~\ref{lemma-step} is proven, being the adaptation of \cite[Lemma 3.4]{Mar3}, the proof of Proposition~\ref{prop:strong-loc} follows the same lines as the \cite[Proof of (3.8), p.70]{Mar3}. For the sake of a concise exposition, we refer to \cite{Mar3} for full details of the continuity argument, only based on the inequality in Lemma~\ref{lemma:Rt} and the limit of Lemma~\ref{lemma-step}.

 The proof of Theorem~\ref{theo:reduit} is then complete.

\section{End of the proof of Theorem~\ref{theo:main}.}\label{sec:conclusion}

We have now everything we need to conclude the proof of Theorem~\ref{theo:main}. 

We already proved in Section~\ref{sec:reduction} that for every $i \in \{1,\ldots,N\}$, we can consider $\omega_i^\eps$ as a single blob of vorticity evolving in an exterior field $F_i^\eps$ and by construction, $\omega_{i,0}^\eps$ and $F_i^\eps$ satisfy \eqref{hyp:omega0_reduced} and \eqref{hyp:F_reduced} respectively with the appropriate choice of constants. Therefore, we can apply Theorem~\ref{theo:reduit} to $\omega_i^\eps$ and since the number of blobs is finite, we obtain directly that properties $(i)$ and $(ii)$ of Theorem~\ref{theo:main} both holds true on the time interval $[0,T_\eps]$.

What remains to prove is simply that $T_\eps = T$ at least for $\eps$ small enough. This is obtained easily by contradiction. Assume that $T_\eps < T$ for some $\eps > 0$. Let $\kappa \in (0,1/4)$. Then we proved that there exists a constant $C_{\kappa,T}$ independent of $\eps$ such that
 \begin{equation*}
 \supp \omega_i^\eps(\cdot,T_\eps) \subset \left\{ x \in \R^2 \, , \, \Big||x| - |z_{i,0}|\Big| \le \frac{C_{\kappa,T}}{|\ln\eps|^\kappa} \right\}.
 \end{equation*}
Assume now that $\eps$ is small enough such that $\frac{C_{\kappa,T}}{|\ln \eps|^\kappa} < \eta_0/2$. By continuity, this is in contradiction with the definition of $T_\eps$ given at \eqref{def:T_eps}. Therefore $T_\eps = T$ and the proof of Theorem~\ref{theo:main} is now complete.

\bigskip
\noindent
{\bf Acknowledgements.} This work was supported by the BOURGEONS project, grant ANR-23-CE40-0014-01 of the French National Research Agency (ANR). M.D. was funded by the Simons Collaboration of Wave Turbulence and thanks the Institut Fourier for its hospitality. C.L. also benefited of the support of the ANR under France 2030 bearing the reference ANR-23-EXMA-004 (Complexflows project of the PEPR MathsViVEs).

\appendix

\section{Appendix}
We start by recalling a lemma of rearrangement which is Lemma B.1 of \cite{Hientzsch_Lacave_Miot_2022_Dynamics_of_PV_for_the_lake_eq}.

Let $g$ be a non increasing continuous function from $(0,+\infty)$, non-negative, such that $s \mapsto sg(s) \in L^1_{\mathrm{loc}}\big([0,\infty)\big)$. Let
\begin{equation*}
 \mathcal{E}_{M,\gamma} = \left\{ f \in L^\infty_c(\R^2) \, , \, 0 \le f \le M \, , \, \int f = \gamma \right\}.
\end{equation*}

\begin{lemme}\label{lem:rearrangement}
For all $x\in\R^2$, we have
\begin{equation*}
 \max_{f \in \mathcal{E}_{M,\gamma}} \int_{\R^2} g\big(|x-y|\big) f(y)\dd y = 2\pi M \int_0^R sg(s) \dd s
\end{equation*}
where $R = \sqrt{\frac{\gamma}{\pi M}}$, namely that $f^* = M\Ind_{B(x,R)}$ is the map that maximizes this quantity on $\mathcal{E}_{M,\gamma}$.
\end{lemme}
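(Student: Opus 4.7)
The plan is to apply the classical bathtub / rearrangement argument. First I verify that the proposed optimizer $f^\ast = M\Ind_{B(x,R)}$ is admissible: since $\pi R^2 M = \gamma$ by the choice of $R$, we have $\int f^\ast = \gamma$ and $0 \le f^\ast \le M$, so $f^\ast \in \mathcal{E}_{M,\gamma}$. Switching to polar coordinates centered at $x$ and using the local integrability of $s \mapsto sg(s)$, I compute
\begin{equation*}
 \int_{\R^2} g(|x-y|) f^\ast(y)\dd y = 2\pi M \int_0^R s g(s)\dd s,
\end{equation*}
which is exactly the claimed value.

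Next, for an arbitrary $f \in \mathcal{E}_{M,\gamma}$, I set $h := f^\ast - f$. Since $\int f^\ast = \int f = \gamma$, we have $\int h = 0$, and the pointwise constraints $0 \le f \le M$ together with the definition of $f^\ast$ give $h \ge 0$ on $B(x,R)$ and $h \le 0$ on $\R^2 \setminus B(x,R)$. The monotonicity of $g$ yields $g(|x-y|) \ge g(R)$ on $B(x,R)$ and $g(|x-y|) \le g(R)$ on the complement.

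The key step is then to subtract the constant $g(R)$, which is legitimate since $\int h = 0$:
\begin{equation*}
 \int g(|x-y|) h(y)\dd y = \int \bigl(g(|x-y|) - g(R)\bigr) h(y)\dd y.
\end{equation*}
Splitting this integral over $B(x,R)$ and its complement, the two factors $(g(|x-y|) - g(R))$ and $h(y)$ have the same sign on each region, hence their product is nonnegative pointwise. This gives $\int g(|x-y|) f^\ast(y)\dd y \ge \int g(|x-y|) f(y)\dd y$, proving that $f^\ast$ realizes the maximum. There is no real obstacle here; the only care needed is to ensure all integrals are well defined, which is guaranteed by $sg(s) \in L^1_\loc([0,\infty))$ together with the compact support and $L^\infty$ bound shared by every $f \in \mathcal{E}_{M,\gamma}$.
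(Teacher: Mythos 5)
Your proof is correct. Note that the paper itself does not prove this lemma: it is recalled verbatim from \cite{Hientzsch_Lacave_Miot_2022_Dynamics_of_PV_for_the_lake_eq} (Lemma B.1 there), so there is no in-paper argument to compare against. Your bathtub-principle argument --- checking admissibility of $f^\ast$, computing its value in polar coordinates, and then showing $\int \bigl(g(|x-y|)-g(R)\bigr)\bigl(f^\ast(y)-f(y)\bigr)\dd y \ge 0$ by the sign agreement of the two factors on $B(x,R)$ and on its complement --- is the standard and complete proof of this statement; the only delicate points (integrability near $y=x$ via $sg(s)\in L^1_{\loc}$, and the legitimacy of subtracting the constant $g(R)$ because $\int h = 0$ with $h\in L^1$) are exactly the ones you address.
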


We now introduce a variant of Gronwall's inequality.
\begin{lemme}\label{lem:gronwall}
Let $f : \R^n \to \R^n$ such that there exists $\kappa$ such that
\begin{equation*}
 \forall \, x,y \in \R^n, \quad \big|f(x)-f(y)\big| \le \kappa|x-y|.
\end{equation*}
Let $g \in L^1(\R_+,\R_+)$ and $T \ge 0$. We assume that $z : \R_+ \to \R^n$ satisfies
\begin{equation*}
 \forall t \in [0,T], \quad z'(t) = f(z(t)),
\end{equation*}
that $y : \R_+ \to \R^n$ satisfies
\begin{equation*}
 \forall t \in [0,T], \quad |y'(t)-f(y(t))| \le g(t).
\end{equation*}
Then
\begin{equation*}
 \forall t \in [0,T], \quad |y(t) - z(t)| \le \left( \int_0^t g(s) \dd s + |y(0)-z(0)|\right)e^{\kappa t}.
\end{equation*}
\end{lemme}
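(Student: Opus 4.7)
The plan is to reduce the claim to the standard integral form of Gronwall's inequality by comparing $y$ to $z$ through an integral representation.

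First, I would set $u(t) := y(t) - z(t)$ for $t \in [0,T]$. Using the integral versions of the differential (in)equalities satisfied by $y$ and $z$, namely $z(t) = z(0) + \int_0^t f(z(s))\,\dd s$ and $y(t) = y(0) + \int_0^t y'(s)\,\dd s$, I would write
\begin{equation*}
u(t) = u(0) + \int_0^t \big[ y'(s) - f(y(s)) \big]\,\dd s + \int_0^t \big[ f(y(s)) - f(z(s)) \big]\,\dd s.
\end{equation*}
Taking norms, using the hypothesis $|y'(s) - f(y(s))| \le g(s)$ on the first integrand and the Lipschitz estimate $|f(y(s)) - f(z(s))| \le \kappa\, |u(s)|$ on the second, I get the scalar integral inequality
\begin{equation*}
|u(t)| \le |u(0)| + \int_0^t g(s)\,\dd s + \kappa \int_0^t |u(s)|\,\dd s =: a(t) + \kappa \int_0^t |u(s)|\,\dd s,
\end{equation*}
where the function $a(t) := |y(0)-z(0)| + \int_0^t g(s)\,\dd s$ is nondecreasing on $[0,T]$ since $g \ge 0$.

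Next, I would apply the classical (integral) Gronwall lemma: if a nonnegative function $\phi$ satisfies $\phi(t) \le a(t) + \kappa \int_0^t \phi(s)\,\dd s$ with $a$ nondecreasing and $\kappa \ge 0$ constant, then $\phi(t) \le a(t)\, e^{\kappa t}$. This yields exactly
\begin{equation*}
|y(t) - z(t)| \le \left( |y(0) - z(0)| + \int_0^t g(s)\,\dd s \right) e^{\kappa t},
\end{equation*}
which is the desired bound.

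There is essentially no obstacle here; the only mild point to watch is regularity, since $y$ is only required to be differentiable with an integrable defect $g$, but the integral formulation above avoids any issue with differentiating $|u|$ at points where $u$ vanishes, and does not require more than absolute continuity of $y$ and $z$ on $[0,T]$. The key inputs are simply the Lipschitz continuity of $f$ and the monotonicity of $a$, both of which hold by assumption.
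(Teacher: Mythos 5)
Your proof is correct and follows essentially the same route as the paper: both pass to the integral representation of $y(t)-z(t)$, bound the defect term by $\int_0^t g(s)\,\dd s$ and the difference $f(y)-f(z)$ by the Lipschitz constant, and then invoke the classical integral Gronwall inequality. The only cosmetic difference is that you justify the final step via monotonicity of $a(t)$ while the paper notes that $a$ is nonnegative and differentiable; both are valid.
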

\begin{proof}
We have that for all $t \in [0,T]$,
\begin{align*}
 |y(t)-z(t)| & \leq \left|\int_0^t \big(y'(s) - z'(s) \big)\dd s \right| + |y(0)-z(0)|
 \\ & \le \int_0^t g(s)\dd s + \left| \int_0^t \big(f(y(s)) - f(z(s))\big) \dd s \right| + |y(0)-z(0)|
 \\ & \le \int_0^t g(s)\dd s + |y(0)-z(0)| + \kappa \int_0^t |y(s)-z(s)|\dd s ,
\end{align*}
so using now the classical Gronwall's inequality, since $t\mapsto \int_0^t g(s)\dd s + |y(0)-z(0)|$ is non negative and differentiable, we have that
\begin{equation*}
 |y(t)-z(t)| \le \left( \int_0^t g(s) \dd s + |y(0)-z(0)|\right)e^{\kappa t}.
\end{equation*}
\end{proof}

\adrese

\begin{thebibliography}{10}

\bibitem{blanco-rodriguez_ledizes_selcuk_delbende_rossi_2015}
F.~J. Blanco-Rodríguez, S.~Le~Dizès, C.~Selçuk, I.~Delbende, and M.~Rossi.
\newblock Internal structure of vortex rings and helical vortices.
\newblock {\em J. Fluid Mech.}, 785:219–247, 2015.

\bibitem{Bolnot_LD_Leweke_Pairing_Instability}
H.~Bolnot, S.~Le~Diz{\`e}s, and T.~Leweke.
\newblock Pairing instability in helical vortices.
\newblock In M.~H{\"o}lling, J.~Peinke, and S.~Ivanell, editors, {\em Wind
  Energy - Impact of Turbulence}, pages 23--28, Berlin, Heidelberg, 2014.
  Springer Berlin Heidelberg.

\bibitem{BronziLopes}
A.~C. Bronzi, M.~C. Lopes~Filho, and H.~J. Nussenzveig~Lopes.
\newblock Global existence of a weak solution of the incompressible {Euler}
  equations with helical symmetry and {{\(L^p\)}} vorticity.
\newblock {\em Indiana Univ. Math. J.}, 64(1):309--341, 2015.

\bibitem{Mar3}
P.~{Butt\`a}, G.~{Cavallaro}, and C.~{Marchioro}.
\newblock {Global time evolution of concentrated vortex rings}.
\newblock {\em {Z. Angew. Math. Phys.}}, 73(2):24, 2022.
\newblock Id/No 70.

\bibitem{Mar2}
P.~Butt{\`a} and C.~Marchioro.
\newblock Time evolution of concentrated vortex rings.
\newblock {\em J. Math. Fluid Mech.}, 22(2):21, 2020.
\newblock Id/No 19.

\bibitem{Cao24}
D.~Cao, B.~Fan, and S.~Lai.
\newblock Concentrated solutions with helical symmetry for the 3{D} {E}uler
  equation and rearrangments.
\newblock {\em J. Differential Equations}, 394:152--173, 2024.

\bibitem{Cao_Wan_2023ter}
D.~Cao and S.~Lai.
\newblock {Helical symmetry vortices for 3D incompressible Euler equations}.
\newblock {\em Journal of Differential Equations}, 360:67--89, 2023.

\bibitem{Cao_Li_Qin_Wan_2025}
D.~Cao, R.~Li, G.~Qin, and J.~Wan.
\newblock {Existence of helical symmetry vortex patch with small cross-section
  for the incompressible Euler equations in R3}.
\newblock {\em {Journal of Differential Equations}}, 418:459--495, 2025.

\bibitem{Cao_Wan_2023}
D.~Cao and J.~Wan.
\newblock {Desingularization of 3D steady Euler equations with helical
  symmetry}.
\newblock {\em {Calculus of Variations and Partial Differential Equations}},
  62:259, 2023.

\bibitem{Cao_Wan_2023bis}
D.~Cao and J.~Wan.
\newblock {Helical vortices with small cross-section for 3D incompressible
  Euler equation}.
\newblock {\em Journal of Functional Analysis}, 284(7):109836, 2023.

\bibitem{Cao_Wan_2022_Structure_of_Green}
D.~Cao and J.~Wan.
\newblock Structure of {G}reen's function of elliptic equations and helical
  vortex patches for {3D} incompressible {E}uler equations.
\newblock {\em Mathematische Annalen}, 388:2627--2669, 2024.

\bibitem{castillo-castellanos_ledizes_2022}
A.~Castillo-Castellanos and S.~Le~Dizès.
\newblock Closely spaced co-rotating helical vortices: long-wave instability.
\newblock {\em J. Fluid Mech.}, 946:A10, 2022.

\bibitem{DaRios}
L.~S. Da~Rios.
\newblock Sul moto d'un liquido indefinito con un filetto vorticoso di forma
  qualunque.
\newblock {\em Rend. Circ. Mat. Palermo}, 22:117--135, 1906.

\bibitem{DavilaPinoMussoWei22}
J.~D{\'a}vila, M.~d. Pino, M.~Musso, and J.~Wei.
\newblock Travelling helices and the vortex filament conjecture in the
  incompressible {Euler} equations.
\newblock {\em Calc. Var. Partial Differ. Equ.}, 61(4):30, 2022.
\newblock Id/No 119.

\bibitem{dekeyser-vanSchaftingen_2020}
J.~Dekeyser and J.~Van~Schaftingen.
\newblock Vortex motion for the lake equations.
\newblock {\em Comm. Math. Phys.}, 375(2):1459--1501, 2020.

\bibitem{desjardins-96}
B.~Desjardins.
\newblock A few remarks on ordinary differential equations.
\newblock {\em Commun. Partial Differ. Equations}, 21(11-12):1667--1703, 1996.

\bibitem{diperna-lions}
R.~J. DiPerna and P.~L. Lions.
\newblock Ordinary differential equations, transport theory and {Sobolev}
  spaces.
\newblock {\em Invent. Math.}, 98(3):511--547, 1989.

\bibitem{Dutrifoy}
A.~Dutrifoy.
\newblock Global-in-time existence of some helicoid solutions of {Euler}
  equations.
\newblock {\em C. R. Acad. Sci., Paris, S{\'e}r. I, Math.}, 329(7):653--656,
  1999.

\bibitem{EttingerTiti}
B.~Ettinger and E.~S. Titi.
\newblock Global existence and uniqueness of weak solutions of
  three-dimensional {Euler} equations with helical symmetry in the absence of
  vorticity stretching.
\newblock {\em SIAM J. Math. Anal.}, 41(1):269--296, 2009.

\bibitem{Fraenkel}
L.~E. Fraenkel.
\newblock On steady vortex rings of small cross-section in an ideal fluid.
\newblock {\em Proc. R. Soc. Lond., Ser. A}, 316:29--62, 1970.

\bibitem{Gilbarg_Trudinger_2001_elliptic}
D.~Gilbarg and N.~Trudinger.
\newblock {\em Elliptic Partial Differential Equations of Second Order}.
\newblock Classics in Mathematics. Springer-Verlag Berlin Heidelberg, 2001.

\bibitem{GuerraMusso}
I.~Guerra and M.~Musso.
\newblock Cluster of vortex helices in the incompressible three-dimensional
  {E}uler equations.
\newblock {\em Ann. Inst. H. Poincaré C Anal. Non Linéaire}, page published
  online first, 2024.

\bibitem{GuoZhao23}
D.~Guo and L.~Zhao.
\newblock Global well-posedness of weak solutions to the incompressible euler
  equations with helical symmetry in r3.
\newblock {\em Journal of Differential Equations}, 416:806--868, 2025.

\bibitem{Hientzsch_Lacave_Miot_2022_Dynamics_of_PV_for_the_lake_eq}
L.~E. Hientzsch, C.~Lacave, and E.~Miot.
\newblock Dynamics of several point vortices for the lake equations.
\newblock {\em Trans. Am. Math. Soc.}, 377(1):203--248, 2024.

\bibitem{JerrardSeis}
R.~L. Jerrard and C.~Seis.
\newblock On the vortex filament conjecture for {E}uler flows.
\newblock {\em Arch. Ration. Mech. Anal.}, 224(1):135--172, 2017.

\bibitem{JiuLiNiu17}
Q.~Jiu, J.~Li, and D.~Niu.
\newblock Global existence of weak solutions to the three-dimensional {Euler}
  equations with helical symmetry.
\newblock {\em J. Differ. Equations}, 262(10):5179--5205, 2017.

\bibitem{majda-bertozzi}
A.~J. Majda and A.~L. Bertozzi.
\newblock {\em Vorticity and incompressible flow}.
\newblock Camb. Texts Appl. Math. Cambridge: Cambridge University Press, 2002.

\bibitem{MarPul93}
C.~Marchioro and M.~Pulvirenti.
\newblock Vortices and localization in {Euler} flows.
\newblock {\em Commun. Math. Phys.}, 154(1):49--61, 1993.

\bibitem{livre-jaune}
C.~Marchioro and M.~Pulvirenti.
\newblock {\em Mathematical theory of incompressible nonviscous fluids},
  volume~96 of {\em Appl. Math. Sci.}
\newblock New York, NY: Springer-Verlag, 1994.

\end{thebibliography}
\end{document}